\pgfplotsset{compat=1.17}
	\newtheorem{main}{Theorem}
    \newtheorem*{scheme}{General Scheme}
    \newtheorem{definition}{Definition}
    \newtheorem{lemma}[definition]{Lemma}
    \newtheorem{theorem}[definition]{Theorem}
    \newtheorem{proposition}[definition]{Proposition}
    \newtheorem{corollary}[definition]{Corollary}
    \theoremstyle{remark}
    \newtheorem{example}[definition]{Example}
    \newtheorem{remark}[definition]{Remark}
    \def\thm@space@setup{\thm@preskip=0.5cm   \thm@postskip=0.5cm}
\newcommand{\N}{\mathbb{N}}
\newcommand{\Z}{\mathbb{Z}}
\newcommand{\Q}{\mathbb{Q}}
\newcommand{\R}{\mathbb{R}}
\newcommand{\C}{\mathbb{C}}
\newcommand{\prim}{\mathbb{P}}
\newcommand{\Tor}{\mathbb{T}}
\newcommand{\E}{\mathrm{e}}
\newcommand{\I}{\mathrm{i}}
\newcommand{\esper}{\mathbb{E}}
\newcommand{\var}{\mathrm{var}}
\newcommand{\proba}{\mathbb{P}}
\newcommand{\Unit}{\mathrm{U}}
\newcommand{\eps}{\varepsilon}
\newcommand{\DD}[1]{\,d\hspace{-0.3mm}{#1}}
\newcommand{\comment}[1]{}
\renewcommand{\Re}{\mathrm{Re}}
\newcommand{\dkol}{d_{\mathrm{Kol}}}
\setlist[enumerate]{itemsep=10pt,topsep=10pt}
\setlist[itemize]{itemsep=5pt,topsep=5pt}
\title[On the precise deviations of the characteristic polynomial]{On the precise deviations of the characteristic\\ polynomial of a random matrix}
\author{Pierre-Loïc Méliot and Ashkan Nikeghbali}
\date{\today}
\begin{document}

\begin{abstract}
In this paper, using techniques developed in our earlier works on the theory of mod-Gaussian convergence, we  prove precise moderate and large deviation results for the logarithm of the characteristic polynomial of a random unitary matrix. In the case where the unitary matrix is chosen according to the Haar measure, the logarithms of the probabilities of fluctuations of order $A=O(N)$ of the logarithm of the characteristic polynomial have been estimated by Hughes, Keating and O'Connell in \cite{HKO01}. In this work we give an equivalent of the probabilities themselves (without the logarithms), and we do so for the more general case of a matrix from the circular $\beta$ ensemble for any parameter $\beta>0$. In comparison to previous results from \cite{FMN16,DHR19}, we   considerably extend the range of fluctuations for which precise estimates can be written.
\end{abstract}

\maketitle

\tableofcontents

\newpage

\section{Characteristic polynomials of matrices of the circular \texorpdfstring{$\beta$}{beta} ensembles}
The goal of this article is to present precise estimates of moderate and large deviations for the characteristic polynomials of random unitary matrices. In the first Subsection \ref{subsec:random_matrices} of this section, we introduce the relevant models from random matrix theory and we recall the known results regarding their fluctuations. In Subsection \ref{subsec:random_zeta}, we explain the connection between the random matrix models of interest and the Riemann $\zeta$-function. We then present in Subsection \ref{subsec:general_method} a general method in order to prove precise large deviation estimates for a sequence of real random variables $(X_N)_{N \in \N}$ (by precise we mean asymptotic estimates of the probabilities themselves, instead of their logarithms). We conclude our introduction in Subsection \ref{subsec:main_results} by stating our main results, and by giving an outline of the later sections of the paper.
\medskip

\noindent \textbf{Notation.} Throughout the paper, $\beta$ and $\delta$ are positive real numbers, and $N \geq 1$ is a positive integer. It will be convenient to set $$\beta'=\frac{\beta}{2} \qquad;\qquad h=2\delta.$$
The open ball with center $z$ and radius $\eps$ in the complex plane is denoted $\mathcal{B}_{(z,\eps)}$, and the vertical strip of complex numbers $z$ with $a<\Re(z)<b$ is denoted $\mathcal{D}_{(a,b)}$. The whole complex plane is denoted $\C$, and the unit circle $\{z\in \C\,|\,|z|=1\}$ is denoted $\Tor$. Given two sequences $(a_N)_{N \in \N}$ and $(b_N)_{N \in \N}$ of positive real numbers, we write $a_N \ll b_N$ if $\lim_{N \to \infty}\frac{a_N}{b_N} = 0$ (in other words, $a_N=o(b_N)$), and $a_N \lesssim b_N$ if $\limsup_{N \to \infty} \frac{a_N}{b_N}<+\infty$ (in other words, $a_N=O(b_N)$). Several computations and the statements of our main theorems will involve the following smooth functions on $\R_+$:
\begin{align*}
\varphi(s) &= \frac{1}{s}\left(\frac{1}{2}-\frac{1}{s}+\frac{1}{\E^s-1}\right);\\
\phi_\beta(s) &= \varphi(s) - \beta'^2\,\varphi(s\beta');\\
\eta_\beta(s) &= \frac{s\beta'\,\phi_\beta(s)}{(\E^{s\beta'}-1)\,\phi_\beta(0)}.
\end{align*}
We have $\eta_2=0$, and if $\beta \neq 2$, then $\eta_\beta$ is positive, decreasing, integrable and with $\eta_\beta(0)=1$.

\begin{figure}[ht]
\begin{center}        
\begin{tikzpicture}[xscale=1,yscale=3]
\draw [thick, NavyBlue, domain=0.5:9, smooth, samples=500] plot ({\x},{(2+8/(exp(\x)-1)-4/(exp(\x/2)-1))/(exp(\x/2)-1)});
\draw [thick, NavyBlue, domain=0:0.51, smooth, samples=500] plot ({\x},{1-\x/4+\x^3/192});
\draw [NavyBlue] (9.5,0.1) node {$\eta_\beta(x)$};
\draw [->] (-0.2,0) -- (10.5,0);
\draw [->] (0,-0.04) -- (0,1.3);
\draw (5,-0.04) -- (5,0);
\draw (-0.2,1) -- (0,1);
\draw (0,-0.12) node {$0$};
\draw (5,-0.12) node {$5$};
\draw (10,-0.12) node {$10$};
\draw (10,-0.04) -- (10,0);
\draw (-0.5,1) node {$1$};
\draw (-0.4,0) node {$0$};
\end{tikzpicture}
\caption{The function $x \mapsto \eta_\beta(x)$ for a parameter $\beta=1$.}\label{fig:eta_beta}
\end{center}
\end{figure}
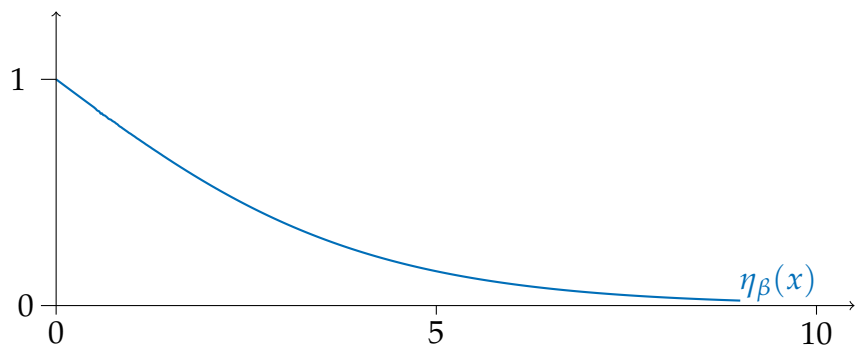

\subsection{The circular \texorpdfstring{$\beta$}{beta} and circular Jacobi \texorpdfstring{$(\beta,\delta)$}{(beta,delta)} ensembles}\label{subsec:random_matrices}
The circular $\beta$ ensemble (in short, C$\beta$E) of order $N$ is the distribution on $N$-tuples $(\E^{\I \theta_1},\ldots,\E^{\I \theta_N})$ of points on the unit circle $\Tor=\R/2\pi\Z$ with density
\begin{equation}
    \frac{1}{C_N(\beta)}\,\prod_{1\leq i<j\leq N}|\E^{\I\theta_i}-\E^{\I \theta_j}|^\beta \DD{\theta_1}\cdots \!\DD{\theta_N},\label{eq:beta_ensemble}
\end{equation}
where the normalisation constant is 
$$C_N(\beta)=(2\pi)^N\,\frac{\Gamma(\beta'N+1)}{\Gamma(\beta'+1)};$$
see for instance \cite[Section 2.8]{For10}. When $\beta=2$, the distribution above is the law of the eigenvalues of a random unitary matrix chosen under the Haar (probability) measure on the unitary group $\Unit(N)$. In the general case of a parameter $\beta>0$, a unitary matrix model with eigenvalue distribution provided by Equation \eqref{eq:beta_ensemble} has been proposed by Killip and Nenciu, see \cite{KN04}. Note that for any parameter $\beta>0$, the distribution of $(\E^{\I\theta_1},\ldots,\E^{\I \theta_N})$ is invariant by multiplication by a phase vector $(z,z,\ldots,z)$ with $z=\E^{\I \theta} \in \Tor$. Therefore, the distribution of the characteristic polynomial
$$P_N(z) = \det(zI_N-U_N) =\prod_{i=1}^N (z-\E^{\I \theta_i})$$
with $z \in \Tor$ does not depend on $z$, and hence without loss of generality we can choose $z=1$.
In this article, we shall be interested in the probabilities of
$$X_N = \log |P_N(1)|=\Re (\log P_N(1)) = \Re \left(\log \left(\prod_{i=1}^N 1-\E^{\I \theta_i} \right)\right)$$
being very large. If one sees the eigenvalues of a unitary random matrix as a system of particles on the unit circle, then the random variable $X_N$ can be considered as the free energy of the system.
The invariance by rotation of the C$\beta$E implies that $\esper_\beta[X_N]=0$ for any $N \geq 1$. On the other hand, the largest possible value is obtained when $U_N=-I_N$ and all the eigenvalues are equal to $-1$, in which case $X_N=N\,\log 2$. Our goal is to obtain the precise asymptotics of 
$\proba_\beta[X_N \geq x]$
for any $x \in [0,N\,\log 2]$. By \emph{precise} we mean that we want an asymptotic equivalent of these probabilies, and not of their logarithms.
\bigskip

To this purpose, it will be useful to generalise a bit the framework described above, and to introduce the circular Jacobi ensembles with parameters $(\beta>0,\delta\geq 0)$. A positive integer $N$ being fixed, the circular Jacobi $(\beta,\delta)$ ensemble (in short, CJ$(\beta,\delta)$E) of order $N$ is the distribution $\proba_{(\beta,\delta)}$ on $N$-tuples $(\E^{\I\theta_1},\ldots,\E^{\I\theta_N})$ of points of the unit circle with density
\begin{equation}\frac{1}{C_N(\beta,\delta)}\,\prod_{1\leq i<j\leq N}|\E^{\I\theta_i}-\E^{\I \theta_j}|^\beta \,\prod_{i=1}^N |1-\E^{\I\theta_i}|^{2\delta} \DD{\theta_1}\cdots \!\DD{\theta_N};\label{eq:circular_jacobi_ensemble}
 \end{equation} 
see \cite{FW00,BNR09} and \cite[Section 3.12]{For10}. The case $\delta=0$ corresponds to the C$\beta$E. On the other hand, the case $\beta=2$, $\delta>0$ corresponds to the so-called Hua--Pickrell measures, see \cite{Hua63,Pic87,Pic91,BO01,Ner02}. A model of random unitary matrices with eigenvalue distribution as in Equation \eqref{eq:circular_jacobi_ensemble} is given in \cite{BNR09}. In particular, given $(\E^{\I \theta_1},\ldots,\E^{\I \theta_N}) \sim \proba_{(\beta,\delta)}$, the theory of deformed Verblunsky coefficients allows one to rewrite the polynomial $P_N(1) = \prod_{i=1}^N (1-\E^{\I \theta_i})$ as a product $\prod_{k=0}^{N-1}(1-\gamma_k)$ of independent random variables, these random variables $\gamma_k$ following explicit distributions on the unit disc $\mathcal{B}_{(0,1)}$ or on the unit circle $\Tor$. The aforementioned paper actually deals with the more general case where $\delta \in \mathcal{D}_{(-\frac{1}{2},+\infty)}$ and the weight 
$$\prod_{i=1}^{N} |1-\E^{\I \theta_i}|^{2\delta} \,\,\text{ is replaced by }\,\,\prod_{i=1}^{N} (1-\E^{-\I \theta_i})^{\delta}(1-\E^{\I \theta_i})^{\overline{\delta}}.$$
Here, we shall only consider the case where $\delta \in \R_+$. The decomposition $P_N(1) = \prod_{k=0}^{N-1}(1-\gamma_k)$ leads to an explicit formula for the Laplace transform of $X_N = \Re (\log P_N(1))$ under the distribution $\proba_{(\beta,\delta)}$. Hence,
\begin{equation}
\esper_{(\beta,\delta)}[\E^{zX_N}] = \prod_{k=0}^{N-1} \frac{\Gamma(\beta'k+1+\delta)^2\,\Gamma(\beta'k+1+2\delta+z)}{\Gamma(\beta'k+1+2\delta)\,\Gamma(\beta'k+1+\delta+\frac{z}{2})^2}, \label{eq:laplace_transform}
\end{equation}
for any $z$ such that $2\delta+\Re(z)>-1$; see \cite[Proposition 4.2]{BNR09}. The asymptotic analysis of this exact formula in various regimes for $z$ and for $\delta$ will be the main technical challenge of this article.
\bigskip

Consider for the moment the special case $\beta=2$ of the C$\beta$E, that is to say the eigenvalue distribution of a Haar distributed unitary matrix. The Heine identity allows one to rewrite the bivariate Laplace transform of the complex random variable $Z_N=\log P_N(1)=X_N+\I Y_N$ as a Toeplitz determinant of size $N\times N$ and associated to a Fisher--Hartwig symbol. The asymptotics of these determinants allowed Hughes, Keating and O'Connell to prove the central limit theorem
\begin{equation}
\frac{Z_N}{\sqrt{\log N}} \rightharpoonup_{\mathrm{law},\,N \to +\infty} \mathcal{N}_{\C}=\mathcal{N}_{\R}\!\left(0,\frac{1}{2}\right)+\I\,\mathcal{N}_{\R}\!\left(0,\frac{1}{2}\right);\label{eq:CLT_HKO}
\end{equation}
see \cite[Theorem 2.1]{HKO01}; this CLT also appears in \cite[Section 2]{KS00}. In particular, for $x$ fixed positive real number,
$$\proba_{2}\!\left[X_N \geq \sqrt{\log N}\,x\right] = \frac{1}{\sqrt{\pi}} \left(\int_{x}^{+\infty} \E^{-y^2}  \DD{y}\right)(1+o(1)).$$
Notice that alternatively, one can prove the central limit theorem by computing the asymptotics of Formula \eqref{eq:laplace_transform} when $N$ goes to infinity and $z$ is fixed. In the setting where $x=sN$ with $s \in (0,\log 2)$, a large deviation principle follows also from the asymptotic analysis of this exact formula:
\begin{equation}
\log \big(\proba_{2}[X_N \geq Nx] \big)= -N^2 \,\Lambda^*(x)\,(1+o(1)),\label{eq:log_large_deviations}
\end{equation}
where $\Lambda^*(x) = \sup_{s\in\R} (xs-\Lambda(s))$ is the Legendre--Fenchel convex dual of the function
$$\Lambda(s) = \begin{cases}
    \frac{1}{2}\,(1+s)^2 \log (1+s) - \left(1+\frac{s}{2}\right)^2 \log \left(1+\frac{s}{2}\right) - \frac{s^2}{4}\,\log(2s) &\text{if }s\geq 0,\\
    +\infty &\text{if }s<0;
\end{cases}$$
see \cite[Theorem 3.3]{HKO01}. Between these two regimes, the logarithms of the probabilities of moderate deviations involve a Gaussian exponent $x^2$: if $(x_N)_{N \in \N}$ is a sequence of positive real numbers such that $\sqrt{\log N}\ll x_N \ll N$, then
\begin{equation}
    \log \big(\proba_{2}[X_N \geq x_N] \big)= - \frac{(x_N)^2}{\log\left(\frac{N}{x_N}\right)}\,(1+o(1)),\label{eq:log_moderate_deviations}
\end{equation}
see \cite[Theorem 3.5]{HKO01}. The results of the present article will improve on these earlier results, by giving the asymptotics of $\proba_2[X_N \geq x_N]$ on the ranges previously described, for a general parameter $\beta>0$. Partial results in this direction were obtained in \cite{FMN16} and \cite{DHR19}; see Subsection \ref{subsec:general_method}.

\subsection{Connection with the Riemann \texorpdfstring{$\zeta$}{zeta}-function}\label{subsec:random_zeta}
When $\beta=2$, the random variables $X_N$ are also meant to predict the behavior of the Riemann $\zeta$-function on the critical line. Let us survey briefly this connection which is one of the main interest of the circular ensembles presented above. Denote $\prim$ the set of prime numbers and $\zeta$ the Riemann zeta function, which is defined on the domain $\{s \in \C\,|\,\Re(s) > 1\}$ by the convergent series and convergent infinite product:
$$\zeta(s)= \sum_{n=1}^\infty \frac{1}{n^s}= \prod_{p \in \prim} \frac{1}{1-p^{-s}},$$
and which is extended by analytic continuation to $\C \setminus \{1\}$. An important part of probabilistic number theory consists in understanding the statistical behavior of the values of $\zeta(s)$ for $s$ in a large domain of the complex plane, for instance a large vertical range $I_{\sigma,T}=\{s=\sigma+\I t,\,\,t \in [T,2T]\}$. Suppose in particular that $\sigma$ is fixed and that $t=U_T$ is chosen uniformly in $[T,2T]$. Then, as $t$ goes to infinity, the random variables $(p^{-\sigma-\I U_T})_{p \in \prim}$ become asymptotically independent and uniformly distributed on the circles $p^{-\sigma}\,\Tor$. If $\sigma>1$, then this joint convergence in law and the absolute convergence of the series $\log (\zeta(s)) = \sum_{p \in \prim} -\log(1-p^{-s})$ on the vertical line $\sigma+\I \R$ imply the existence of a limiting distribution for $\log (\zeta(\sigma + \I U_T))$: we have the weak convergence
$$ \log (\zeta(\sigma+\I U_T)) \rightharpoonup_{\mathrm{law},\,T \to +\infty} \mu(\sigma),$$
and the limiting distribution $\mu(\sigma)$ is the compactly supported distribution of the convergent random series 
$$\sum_{p \in \prim} -\log(1-p^{-\sigma}\,C_p),$$ where $(C_p)_{p \in \prim}$ is a family of independent uniform variables on the circle $\Tor$. By the Kolmogorov two series criterion, this random series is still almost surely convergent for $\sigma > \frac{1}{2}$, and Bohr, Jessen and Wintner extended the convergence result to this setting. For $\frac{1}{2}<\sigma \leq 1$, the limiting distribution $\mu(\sigma)$ of  $\log (\zeta(\sigma+\I U_T))$ is now supported by the whole complex plane; see \cite{BJ30,BJ32,JW35,BJ48} for more details on the properties of $\mu(\sigma)$.\medskip

Suppose now that $\sigma=\frac{1}{2}$. Then, the random series $\sum_{p \in \prim}-\log(1-p^{-\frac{1}{2}}C_p)$ does not converge anymore, but there is still a limiting distribution for $\log(\zeta(\frac{1}{2}+ \I U_T))$, albeit with a renormalisation of these random variables:
\begin{equation}
\frac{\log(\zeta(\frac{1}{2}+\I U_T))}{\sqrt{\log\log T}} \rightharpoonup_{\mathrm{law},\,T \to +\infty} \mathcal{N}_{\C}.\label{eq:CLT_Selberg}
\end{equation}
This is the Selberg central limit theorem; see the papers of Selberg \cite{Sel46,Sel92}, and \cite{Gho83,BH95,RS15} for a detailed account. The obvious analogy between the convergences in law \eqref{eq:CLT_HKO} and \eqref{eq:CLT_Selberg} is a small part of a strong connection between: 
\begin{itemize}
    \item the asymptotic behavior of the characteristic polynomial of a Haar-distributed random unitary matrix;
    \item the asymptotic behavior of the Riemann $\zeta$-function on its critical line $\sigma =\frac{1}{2}$.
\end{itemize}
In particular, the pair correlations of the zeroes of the characteristic polynomial $P_N(z)$ (so, the eigenvalues of $U_N$), which are asymptotically given by the sine-kernel, have been conjectured by Montgomery \cite{Mont73} to also be the asymptotic correlations of the zeroes of the function $\zeta(\frac{1}{2}+\I t)$ on a large range $t \in [T,2T]$. This conjecture has been extended to the higher correlation functions by Rudnick and Sarnak \cite{RS96}. A striking connection  between the moments of the characteristic polynomial (the case $\beta=2$, $\delta=0$ and $z=2k$ of Formula \eqref{eq:laplace_transform}) and the  moments of  $\zeta(\frac{1}{2}+\I U_T)$ has also been conjectured by Keating and Snaith in \cite{KS00}: for any integer $k \geq 1$,
$$\lim_{T \to +\infty} \left(\frac{\esper[|\zeta(\frac{1}{2}+\I U_T)|^{2k}]}{(\log T)^{k^2}}\right) = \left(\lim_{N \to +\infty} \left(\frac{\esper_2[|P_N(1)|^{2k}]}{N^{k^2}}\right) \right)\,\Theta(k),$$
where 
$$\Theta(k) = \prod_{p \in \prim} \left(\sum_{n=0}^{\infty} \left(\frac{\Gamma(n+k)}{n!\,\Gamma(k)}\right)^{\!2}\,(1-p^{-1})^{k^2} p^{-n}\right).$$
We refer to \cite{CFKRS05} for an extension of this conjecture to $L$-functions. Finally, during the last decade, the extrema of the two random fields $(\log |P_N(z)|)_{z \in \Tor}$ and $(\log |\zeta(\frac{1}{2}+\I t)|)_{t \in [T,2T]}$ have been studied and compared, in the framework of log-correlated fields. Fyodorov, Hiary and Keating conjectured in \cite{FHK12,FK14} the convergence in law
$$\max_{z \in \Tor}\left(\log|P_N(z)|\right) - \left(\log N - \frac{3}{4}\,\log \log N\right) \rightharpoonup_{\mathrm{law},\,N\to +\infty} V$$
for some explicit random variable $V$. The tightness of the left-hand side has been established in \cite{CMN18}, see also \cite{ABB17}. Similarly, it has been conjectured that
$$\max_{t \in [T,2T]}\left(\log\left|\zeta\!\left(\frac{1}{2}+\I t\right)\right|\right) - \left(\log \log T - \frac{3}{4}\,\log \log \log T\right) \rightharpoonup_{\mathrm{law},\,T\to +\infty} W$$
for some random variable $W$; partial results in this direction have been obtained in \cite{Naj18,ABBRS19}.
\medskip

We believe that our results of precise large deviations can be extended to the following arithmetic analogues of the random variables $X_N$:
$$\chi_{N} = \Re\left(\sum_{\substack{p \in \prim\\p\leq N}} -\log(1-p^{-\frac{1}{2}}\,C_p)\right)$$
with $(C_p)_{p \in \prim}$ as above. The variables $\chi_N$ are closely related to the Riemann $\zeta$-function, and for instance they have been used in \cite{RS15} in order to give a relatively short proof of the Selberg central limit theorem. We plan to prove in forthcoming works the analogue of our precise large deviation results for $X_N$ for the truncated random $\zeta$-functions $\chi_N$, thereby strengthening the connection between the asymptotics of the characteristic polynomial of random unitary matrices, and the asymptotics of the Riemann $\zeta$-function on the critical line.

\subsection{A general method in order to obtain precise large deviations}\label{subsec:general_method}
Our goal is to make more precise the estimates \eqref{eq:log_large_deviations} and \eqref{eq:log_moderate_deviations}, by computing the asymptotic behavior of the probabilities of large and moderate deviations instead of their logarithms. These kinds of estimates are called \emph{sharp} or \emph{strong} large deviations in the literature. A first step towards such estimates has been made in the papers \cite{FMN16,DHR19} by using the theory of mod-Gaussian convergent sequences. Given a sequence of real-valued random variables $(X_N)_{N \in \N}$ with 
 Laplace transforms well defined on a vertical strip $\mathcal{D}_{(a,b)}$, we say that the sequence converges in the mod-Gaussian sense with parameters $(t_N)_{N \in \N}$ and domain of convergence $\mathcal{D}_{(a,b)}$ if $t_N \to +\infty$ and if, locally uniformly on this domain,
$$\esper[\E^{zX_N}]\,\E^{-\frac{t_Nz^2}{2}} \to_{N \to \infty} \Psi(z),$$
$\Psi(z)$ being a holomorphic function with $\Psi(0)=1$. The case where $z=\I \xi$ is restricted to the imaginary line appeared first in \cite{JKN11}, and the definition with a strip of convergence in the complex plane allows one to obtain large or moderate deviation estimates; see \cite[Definition 1.1.1]{FMN16}. The more general situation where the exponent $\frac{z^2}{2}$ of the Gaussian distribution is replaced by the Lévy--Khintchine exponent $\eta(z)$ of an infinitely divisible distribution is detailed in \cite{DKN15,FMN16,FMN19}. Now a fundamental example of mod-Gaussian convergent sequences is provided by the real parts of the logarithms of the characteristic polynomials of Haar distributed unitary random matrices. This case is quite easier than the general case, because the Laplace transform 
from Equation \eqref{eq:laplace_transform} can then be rewritten in terms of the Barnes $G$-function (\emph{cf.} \cite{Bar00} and \cite[Appendix]{Vor87}). We recall that this entire function is given by the convergent infinite product
$$G(z+1) = \E^{\frac{z(\log (2\pi) - 1) - z^2(\gamma+1)}{2}}\,\prod_{k=1}^\infty \left(1+\frac{z}{k}\right)^k \,\E^{-z+\frac{z^2}{2k}},$$
and that it satisfies the functional equation $G(z+1)=\Gamma(z)\,G(z)$. The Stirling expansion of the Barnes function is 
\begin{equation}
G(1+z) = \exp\left(z^2\left(\frac{1}{2}\,\log z - \frac{3}{4}\right) + z\,\frac{\log 2\pi}{2} - \frac{1}{12}\,\log z + \zeta'(-1)+O\!\left(\frac{1}{|z|}\right)\right);\label{eq:stirling_barnes}
\end{equation}
see for instance \cite[Equations (A.6) and (A.11)]{Vor87}. The functional equation of the Barnes function yields, for any $z$ with $\Re(z)>-1$:
$$\esper_{2}[\E^{zX_N}] = \frac{G(1+N)\,G(1+N+z)}{G(1+N+\frac{z}{2})^2}\,\Psi(z),\quad\text{with }\Psi(z)=\frac{G(1+\frac{z}{2})^2}{G(1+z)}.$$
By injecting the asymptotic estimate \eqref{eq:stirling_barnes} in this formula, we obtain
\begin{equation}
\esper_{2}[\E^{zX_N}] = \E^{\frac{\log N}{4}\,z^2}\,\Psi(z) \,\left(1+O\!\left(\frac{1+|z|^3}{N}\right)\right)\label{eq:mod_gaussian_haar}
\end{equation}
for any $z$ fixed in $\mathcal{D}_{(-1,+\infty)}$. We therefore have a mod-Gaussian convergence with parameters $t_N = \frac{\log N}{2}$ and limiting residue $\Psi(z)$. 
This property was first noticed in \cite[Section 3]{KN12}. In \cite[Theorems 4.15 and 5.1]{DHR19} the mod-Gaussian convergence of the logarithms of the moduli of the characteristic polynomials has been generalised by Dal Borgo, Hovhannisyan and Rouault to the case of a random matrix of a general CJ$(\beta,\delta)$E. Thus, $\beta$ and $\delta$ being two fixed positive parameters, we have the mod-Gaussian convergence
$$\esper_{(\beta,\delta)}\!\left[\E^{z(X_N-\frac{2\delta}{\beta}\log N)}\right] = \E^{\frac{\log N}{2\beta}\,z^2}\,\Psi_{(\beta,\delta)}(z)\,(1+o(1))$$
on the domain $\mathcal{D}_{(-\frac{1}{3},\infty)}$ and for some explicit functions $\Psi_{(\beta,\delta)}$ which can be expressed in terms of the Barnes and Gamma functions. 
Now, a general result of moderate or large deviations in the setting of mod-Gaussian convergent sequences is the following: if $(X_N)_{N \in \N}$ is mod-Gaussian convergent on $\mathcal{D}_{(a,b)}$ with parameters $(t_N)_{N \in \N}$ and limiting residue $\Psi(z)$, then, assuming $a<0<b$ and $(t_N)^{-\frac{1}{2}} \ll x < b$, we have
\begin{equation}
\proba[X_N \geq x\,t_N] = \frac{\E^{-\frac{t_N\,x^2}{2}}}{x\sqrt{2\pi t_N}}\,\Psi(x)\,(1+o(1));\label{eq:mod_gaussian_moderate_deviations}
\end{equation}
see \cite[Theorem 4.2.1]{FMN16}. 
As a consequence, looking at the log-characteristic polynomial of a Haar distributed unitary matrix, we see that for any sequence $(x_N)_{N \in \N}$ with $\sqrt{\log N} \ll x_N \lesssim \log N$, 
\begin{equation}
\proba_{2}[X_N \geq x_N] = \frac{\E^{-\frac{(x_N)^2}{\log N}}}{2x_N}\,\sqrt{\frac{\log N}{\pi}}\,\Psi\!\left(\frac{2x_N}{\log N}\right) (1+o(1)), \label{eq:moderate_deviation_small_range_haar}
\end{equation}
see Theorem 7.5.1 in \emph{loc.~cit.} By setting $x_N=\sqrt{\frac{\log N}{2}}\, y_N$, we see that for $\sqrt{\log N} \lesssim x_N \ll \log N$, the Gaussian estimate
$$\proba_{2}[X_N \geq x_N] = \proba\!\left[\mathcal{N}_\R\!\left(0,\frac{\log N}{2}\right) \geq x_N\right]\,(1+o(1))$$
holds; whereas at the scale $x_N = O(\log N)$, a multiplicative factor $\Psi(\frac{2x_N}{\log N})$ measures the difference between the two probabilities. Similarly, for the matrices of the CJ$(\beta,\delta)$E, the mod-Gaussian convergence result of Dal Borgo--Hovhannisyan--Rouault leads to the following estimate of moderate deviations:
\begin{equation}
\proba_{(\beta,\delta)}\left[X_N \geq \frac{2\delta}{\beta}\log N + x_N\right] = \frac{\E^{-\frac{\beta\,(x_N)^2}{2\log N}}}{x_N}\,\sqrt{\frac{\log N}{2\pi\beta }}\,\Psi_{(\beta,\delta)}\!\left(\frac{\beta x_N}{\log N}\right) (1+o(1))\label{eq:moderate_deviation_small_range_jacobi}
\end{equation}
for any sequence $(x_N)_{N \in \N}$ with $\sqrt{\log N} \ll x_N \lesssim \log N$; see \cite[Theorem 4.16]{DHR19}. 
\bigskip

These sharp estimates of moderate deviations, which follow readily from Equation \eqref{eq:mod_gaussian_moderate_deviations}, are still far from what we want to prove: indeed, we are interested in fluctuations of size $x_N$ up to $O(N)$, instead of $O(\log N)$. The solution to this problem relies on the two following important observations (Lemmas \ref{lem:mod_tilting} and \ref{lem:control_zone}).

\begin{lemma}[Mod-Gaussian convergence and exponential tilting of measures]\label{lem:mod_tilting}
Suppose that $(X_N)_{N \in \N}$ converges in the mod-Gaussian sense on a domain $\mathcal{D}_{(a,b)}$, with parameters $(t_N)_{N \in \N}$ and limiting function $\Psi(z)$. Consider a real parameter $h \in (a,b)$ such that $\Psi(h)\neq 0$. We introduce the new sequence of variables $(X_{N,h})_{N \in \N}$ with distributions
$$\proba_{N,h}[\!\DD{x}] = \frac{\E^{hx}}{\esper[\E^{hX_N}]}\,\proba_{N}[\!\DD{x}],$$
$\proba_N$ being the law of $X_N$. The sequence $(X_{N,h}-t_Nh)_{N \in \N}$ converges again in the mod-Gaussian sense with parameters $(t_N)_{N \in \N}$, domain of convergence $\mathcal{D}_{(a-h,\,b-h)}$, and limiting function $\frac{\Psi(z+h)}{\Psi(h)}$.
\end{lemma}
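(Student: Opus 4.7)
The plan is to compute the modified Laplace transform directly and recognize the result as a ratio of two mod-Gaussian expressions. First I would write, for $z$ in a domain to be determined,
\begin{align*}
\esper_{N,h}\!\left[\E^{z(X_{N,h}-t_Nh)}\right] &= \E^{-zt_Nh}\,\esper_{N,h}[\E^{zX_{N,h}}] = \E^{-zt_Nh}\,\frac{\esper[\E^{(z+h)X_N}]}{\esper[\E^{hX_N}]},
\end{align*}
where the second equality uses the explicit Radon--Nikodym derivative defining $\proba_{N,h}$. This expression makes sense as soon as $z+h\in\mathcal{D}_{(a,b)}$, that is $z\in\mathcal{D}_{(a-h,b-h)}$, so this is the correct strip.

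The key algebraic observation is the elementary identity
\[
-zt_Nh - \frac{t_Nz^2}{2} = -\frac{t_N(z+h)^2}{2}+\frac{t_Nh^2}{2},
\]
which lets me multiply the expression above by $\E^{-t_Nz^2/2}$ and re-bracket to obtain
\[
\esper_{N,h}\!\left[\E^{z(X_{N,h}-t_Nh)}\right]\E^{-\frac{t_Nz^2}{2}} = \frac{\esper[\E^{(z+h)X_N}]\,\E^{-\frac{t_N(z+h)^2}{2}}}{\esper[\E^{hX_N}]\,\E^{-\frac{t_Nh^2}{2}}}.
\]
By the hypothesis of mod-Gaussian convergence of $(X_N)$, the numerator converges to $\Psi(z+h)$ and the denominator to $\Psi(h)\neq 0$; both convergences are locally uniform on the relevant domain.

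It then remains to check that the limit is locally uniform in $z\in\mathcal{D}_{(a-h,b-h)}$. Given any compact $K\subset\mathcal{D}_{(a-h,b-h)}$, the translate $K+h$ is a compact subset of $\mathcal{D}_{(a,b)}$, so the numerator converges uniformly on $K$ to $\Psi(z+h)$. Since $\Psi$ is continuous and $\Psi(h)\neq 0$, the denominator is bounded away from zero for $N$ large enough, and taking the quotient preserves local uniform convergence. This yields the advertised limit $\tfrac{\Psi(z+h)}{\Psi(h)}$, which equals $1$ at $z=0$, so the definition of mod-Gaussian convergence is satisfied with parameters $(t_N)_{N\in\N}$ on the strip $\mathcal{D}_{(a-h,b-h)}$. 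I do not see a genuine obstacle here; the only point that requires a little care is the bookkeeping of the shift of strip and of the role played by the centering $t_Nh$, which is exactly what is needed to absorb the cross-term $zt_Nh$ into a completed square.
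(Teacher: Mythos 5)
Your proof is correct and follows exactly the same route as the paper's: compute the tilted Laplace transform via the Radon--Nikodym derivative, complete the square to absorb the cross-term $zt_Nh$, and identify the result as the ratio $\Psi_N(z+h)/\Psi_N(h)$ of normalized Laplace transforms, which converges locally uniformly to $\Psi(z+h)/\Psi(h)$ since $\Psi(h)\neq 0$. The only cosmetic difference is that the paper names $\Psi_N(z)=\esper[\E^{zX_N}]\E^{-t_Nz^2/2}$ explicitly before manipulating, whereas you carry the expression through directly.
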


\begin{proof}
This result originally appeared in \cite[Lemma 4.2.5]{FMN16}, in the more general case of mod-$\phi$ convergent sequences. Set $\Psi_N(z) = \esper[\E^{zX_N}]\,\exp(-\frac{t_N\,z^2}{2})$. We have 
$$\esper[\E^{zX_{N,h}}] = \frac{\esper[\E^{(z+h)X_N}]}{\esper[\E^{hX_N}]} = \E^{\frac{t_N((z+h)^2-h^2)}{2}}\,\frac{\Psi_N(z+h)}{\Psi_N(h)}=\E^{z(t_Nh)}\,\E^{\frac{t_Nz^2}{2}}\,\frac{\Psi_N(z+h)}{\Psi_N(h)}.$$
The result follows by local uniform convergence of the residues $\Psi_N$ towards $\Psi$, since $\Psi(h)\neq 0$.
\end{proof}

\begin{example}\label{ex:hua_pickrell}
The case $\beta=2$ and $\delta \geq 0$ (Hua--Pickrell measures) of Equation \eqref{eq:moderate_deviation_small_range_jacobi} follows immediately from the mod-Gaussian convergence of $(X_N)_{N \in \N}$ under $\proba_{2}$ (Equation \eqref{eq:mod_gaussian_haar}), and from the lemma above. Indeed,  the sequence $(X_N)_{N \in \N}$ under the Hua--Pickrell distribution with parameter $\delta$ is obtained from the same sequence under the Haar measure by an exponential change of measure of parameter $h=2\delta$. Consequently, if $\Psi(z)=\Psi_{(2,0)}(z)$ is the residue previously computed in the Haar case, then
$
\Psi_{(2,\delta)}(z) = \frac{\Psi(z+2\delta)}{\Psi(2\delta)}$.
\end{example}
\medskip

Let us note that if the tilting parameter $h=h_N$ goes to infinity in such a way that the ratio $\Psi_N(z+h_N)/\Psi_N(h_N)$ admits a non-trivial limit, then we still have a mod-Gaussian convergence. This observation opens the way for an extension of the range of parameters $x_N$ for which an estimate of large deviations such as \eqref{eq:moderate_deviation_small_range_haar} or \eqref{eq:moderate_deviation_small_range_jacobi} can be proved. If we also allow the variance parameter $t_N$ to be modified when estimating the Laplace transform of the tilted random variable $X_{N,h_N}$, then the range for the parameters $x_N$ can be even larger. The final nail on the coffin of the restrictions for $x_N$ is the following second observation: in order to get the sharp estimate \eqref{eq:mod_gaussian_moderate_deviations}, during the proof of \cite[Theorem 4.2.1]{FMN16}, we only used an upper bound on the Kolmogorov distance
$$\dkol\left(\frac{X_{N,h_N} - \esper[X_{N,h_N}]}{\sqrt{ \var(X_{N,h_N}) }},\,\mathcal{N}_\R(0,1)\right)$$
 stemming from the mod-Gaussian convergence of the tilted sequence $(X_{N,h_N})_{N \in \N}$. However, such estimates hold \emph{even if strictly speaking we do not have mod-Gaussian convergence}.

\begin{lemma}[Berry--Esseen estimates from a zone of control]\label{lem:control_zone}
Let $(X_N)_{N \in \N}$ be a sequence of random variables such that, for any $\xi \in \R$, we have
$$\esper\!\left[\E^{\I \xi (X_N-\esper[X_N])}\right] = \E^{-\frac{t_N\,\xi^2}{2}}\,\exp(O(|\xi|^3)),$$
with an implied constant $M$ for the $O(\cdot)$. Set $V_N=\frac{X_N-\esper[X_N]}{\sqrt{\var(X_N)}}$. There exists $C>0$ such that
$$\dkol(V_N,\mathcal{N}_{\R}(0,1))= \sup_{s \in \R} \left|\proba[V_N \leq s]-\frac{1}{\sqrt{2\pi}}\int_{-\infty}^s \E^{-\frac{u^2}{2}}\DD{u}\right|\leq \frac{CM}{(t_N)^{\frac{3}{2}}}.$$
\end{lemma}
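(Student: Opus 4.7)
This is a classical Berry--Esseen type statement, which I would prove via Esseen's smoothing inequality
\[
\dkol(V_N,\mathcal{N}_\R(0,1)) \leq \frac{1}{\pi}\int_{-T}^{T} \left|\frac{\phi_{V_N}(\xi) - \E^{-\xi^2/2}}{\xi}\right|\DD{\xi} + \frac{24}{\pi T\sqrt{2\pi}},
\]
with a judicious choice of $T$, where $\phi_{V_N}$ denotes the characteristic function of $V_N$ and the constant $24/\sqrt{2\pi}$ comes from $\sup_y (\E^{-y^2/2}/\sqrt{2\pi})$. A preliminary observation: Taylor expanding the hypothesis at $\xi=0$ forces $\var(X_N)=t_N$, since the cumulant generating function $K(\xi)=\log\esper[\E^{\I\xi(X_N-\esper[X_N])}]$ equals $-t_N\xi^2/2+O(|\xi|^3)$ and the cubic remainder contributes nothing to the quadratic coefficient. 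Rescaling by $\sqrt{\var(X_N)}=\sqrt{t_N}$, the hypothesis rewrites as
\[
\phi_{V_N}(\xi) = \E^{-\xi^2/2}\,\exp(g_N(\xi)), \qquad |g_N(\xi)| \leq \frac{M|\xi|^3}{(t_N)^{3/2}}.
\]

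The plan is to choose $T=(t_N)^{3/2}/(4M)$, so that the Esseen boundary term $\sim 1/T$ is exactly of the target order $M/(t_N)^{3/2}$, and then to split the integral at $|\xi|=(t_N)^{1/2}$. On the bulk region $|\xi|\leq (t_N)^{1/2}$ the exponent $g_N(\xi)$ stays bounded by $M$, and the elementary inequality $|\E^u-1|\leq C_1|u|$ on bounded disks yields $|\phi_{V_N}(\xi)-\E^{-\xi^2/2}|\leq C_1 M\,\E^{-\xi^2/2}\,|\xi|^3/(t_N)^{3/2}$; integrating against $\xi^2\E^{-\xi^2/2}$ over $\R$ produces a contribution of order $M/(t_N)^{3/2}$. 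On the tail region $(t_N)^{1/2}\leq|\xi|\leq T$, the bound $M|\xi|^3/(t_N)^{3/2}\leq \xi^2/4$ holds throughout by our choice of $T$, so $|\phi_{V_N}(\xi)|\leq \E^{-\xi^2/4}$; the integrand is then majorised by $2\E^{-\xi^2/4}/|\xi|$, and the tail integral is of order $\E^{-t_N/8}/(t_N)^{1/2}$, super-polynomially smaller than $M/(t_N)^{3/2}$. Summing the three pieces gives the claimed bound.

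The conceptual content is standard; the only delicate point is the balance in the choice of $T$. A larger $T$ improves the boundary term, but the pointwise estimate $|\phi_{V_N}(\xi)|\leq \E^{-\xi^2/4}$ needed along $[-T,T]$ remains valid only for $|\xi|\leq (t_N)^{3/2}/(4M)$. The scale $T\sim(t_N)^{3/2}/M$ is the precise value at which both constraints become simultaneously satisfiable, and this is exactly what forces the exponent $3/2$ in the final estimate $CM/(t_N)^{3/2}$.
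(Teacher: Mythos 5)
Your route is genuinely different from the paper's: the authors simply place themselves in the ``zone of control'' framework of \cite{FMN19} (Definition 5 and its Equation (5)), with the appropriate parameters, and read off the bound; you instead give a self-contained proof from Esseen's smoothing inequality. The overall strategy is sound, your preliminary observation that the hypothesis forces $\var(X_N)=t_N$ is correct (once one notes that $V_N$ being defined forces a finite second moment, hence a $C^2$ characteristic function), and you correctly isolate $T\sim (t_N)^{3/2}/M$ as the right scale of the zone. However, the bulk/tail split at $|\xi|=(t_N)^{1/2}$ does \emph{not} deliver the conclusion with a universal constant $C$, and this is not a cosmetic point: in the paper's applications the implied constant $M$ varies with $N$ (in Equation \eqref{eq:kolmogorov} one has effectively $M \sim 1/\delta_N$, which can be small), so the $M$-dependence of $C$ must be controlled.

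Concretely, two steps break down. On the bulk $|\xi|\leq (t_N)^{1/2}$ you have $|g_N(\xi)|\leq M$, and the inequality $|\E^u-1|\leq C_1|u|$ on the disk $|u|\leq M$ forces $C_1=\E^{M}$, so your bulk contribution is $\E^{M}M/(t_N)^{3/2}$, not $CM/(t_N)^{3/2}$ with $C$ universal. On the tail, the bound you obtain, of order $\E^{-t_N/8}/(t_N)^{1/2}$, is dominated by $M/(t_N)^{3/2}$ only if $M\gtrsim t_N\,\E^{-t_N/8}$, which fails when $M$ is small relative to $t_N$ — exactly the regime of Corollary \ref{cor:central_limit}, where $t_N=v_{N,\beta}\sim\beta^{-1}\log(N/\delta_N)$ while $M\sim 1/\delta_N$. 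Both defects disappear if you split at $|\xi|=R_N$ with $R_N=\bigl((t_N)^{3/2}/M\bigr)^{1/3}$ instead. Then on the bulk $|g_N(\xi)|\leq 1$, so $|\E^{g_N}-1|\leq \E\,|g_N|$ with a universal constant and the bulk gives $\E\sqrt{2\pi}\,M/(t_N)^{3/2}$; on the tail one gets $\lesssim \E^{-R_N^2/4}/R_N^2 \leq \sqrt{2/\E}\,R_N^{-3}=\sqrt{2/\E}\,M/(t_N)^{3/2}$ since $\sup_{u>0}u\,\E^{-u^2/4}=\sqrt{2/\E}$ is universal. With that corrected cutoff your argument does give a universal $C$ and is a perfectly good, more elementary substitute for the reference to \cite{FMN19}.
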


\begin{proof}
We are in the situation of \cite[Definition 5]{FMN19}, with the following parameters: 
$$\alpha=2,\,\,\, c^\alpha=\frac{1}{2},\,\,\, v=w=3,\,\,\, K_1=K_2=M,\,\,\, \gamma=1,\,\,\, K=\frac{1}{4M}.$$
Indeed, if $\theta_N(\xi)=\esper[\E^{\I \xi (X_N-\esper[X_N])}] \,\E^{\frac{t_N\,\xi^2}{2}}$, then $\theta_N(\xi)=\exp(u_N(\xi))$ with $|u_N(\xi)|\leq M|\xi|^3$, so
$|\theta_N(\xi)-1| \leq |u_N(\xi)|\,\exp(|u_N(\xi)|)\leq M|\xi|^3\,\exp(M|\xi|^3)$
for any $\xi \in \R$. The notion of zone of control leads one to only use this estimate on the domain $[-Kt_N,Kt_N]$ with $K=\frac{1}{4M}$ (see Condition (Z2) in \emph{loc.~cit.}). Then, Equation (5) in \emph{loc.~cit.} ensures the claimed inequality
for some universal constant $C$ (choosing appropriately the parameter $\lambda$ in the aforementioned equation from \cite{FMN19} gives $C \leq 14$). 
% C leq 13.5 
\end{proof}
\medskip

Let us now describe a general scheme in order to prove sharp deviation estimates for a sequence of real random variables $(X_N)_{N \in \N}$. This scheme is inspired by classical arguments used in the proofs of the Cramér theorem and of the Bahadur--Rao estimates of strong large deviations for sums of i.i.d.~random variables; see \cite[Theorems 2.2.3 and 3.7.4]{DZ98} and \cite{BR60,CS93}; and the aforementioned result from \cite{FMN16} is a particular case of the general scheme.

\begin{scheme}
Consider a sequence of centered real random variables $(X_N)_{N \in \N}$. The following steps enable the calculation of an asymptotic equivalent of $\,\proba[X_N \geq a_N]\,$ for parameters $0<a_N < M_N = \|X_N\|_\infty$.
\begin{enumerate}[label=\textbf{Step \arabic*.}]
    \item Given $a_N \in (0,M_N)$, find (an asymptotic expansion of) the tilting parameter $h_N$ such that
    $$\esper[\E^{zX_{N,h_N}}] = \frac{\esper[\E^{(z+h_N)X_N}]}{\esper[\E^{h_NX_N}]}\qquad;\qquad \esper[X_{N,h_N}] = a_N.$$
    Compute (an asymptotic expansion of) the variance $v_N = \var(X_{N,h_N})$.
    \item Use the lemma \ref{lem:control_zone} of zone of control  in order to compute an upper bound on 
    $$\dkol\left(\frac{X_{N,h_N} - a_N}{\sqrt{v_N}}, \,\mathcal{N}_\R(0,1)\right)\lesssim\eps_N.$$
    In particular, identify those parameters $a_N$ for which the distance goes to $0$ (asymptotic normality after tilting), and those parameters for which it goes to $0$ faster than $\frac{1}{h_N\sqrt{v_N}}$ (strong asymptotic normality after tilting).
    \item If we have strong asymptotic normality after tilting with $\eps_N\ll \frac{1}{h_N\sqrt{v_N}} \ll 1$, then we have the asymptotic estimate:
    $$\proba[X_N \geq a_N] =\frac{\esper[\E^{h_N(X_{N}-a_N)}]}{h_N \sqrt{2\pi v_N}}\,(1+o(1)).$$
    If we only have asymptotic normality after tilting with $\eps_N \lesssim \frac{1}{h_N\sqrt{v_N}} \ll 1$, then we have an upper bound which is sharp up to a multiplicative constant:
     $$\proba[X_N \geq a_N] \lesssim \frac{\esper[\E^{h_N(X_{N}-a_N)}]}{h_N \sqrt{2\pi v_N}}.$$
\end{enumerate}
\end{scheme}

\noindent This general scheme requires good estimates of the Laplace transform $\esper[\E^{zh_N}]$ for complex parameters $z$ with real part $\Re(z)=h_N$, where $h_N$ is given by the first step. Notice also that we need $h_N \sqrt{v_N} \to +\infty$ for the third step; usually, this will be the case when $a_N$ is large enough, and the estimates for $a_N$ small are covered by the central limit theorem satisfied by the sequence $(X_N)_{N \in \N}$.

\begin{proof}[Proof of the validity of the scheme]
Denote $\proba_N[\!\DD{x}]$ the distribution of $X_N$, and $\proba_{N,h_N}[\!\DD{x}]$ the distribution of the tilted random variable $X_{N,h_N}$. The exponential change of measure with parameter $h_N$ relates the two distributions by:
$$\proba_{N,h_N}[\!\DD{x}] = \frac{\E^{h_N x}}{\esper[\E^{h_NX_N}]}\,\proba_N[\!\DD{x}].$$
Therefore, 
$$\esper[\E^{z X_{N,h_N}}] = \int_{\R} \E^{zx}\,\proba_{N,h_N}[\!\DD{x}] = \frac{1}{\esper[\E^{h_NX_N}]} \int_{\R} \E^{(z+h_N)X_N}\,\proba_N[\!\DD{x}] =  \frac{\esper[\E^{(z+h_N)X_N}]}{\esper[\E^{h_NX_N}]}$$
assuming that the Laplace transforms are convergent. The parameters $h_N$, $a_N$ and $v_N$ satisfy
\begin{align*}
a_N &= \esper[X_{N,h_N}] = \left.\frac{d(\esper[\E^{zX_{N,h_N}}])}{dz}\right|_{z=0}= \frac{\esper[X_N\,\E^{h_NX_N}]}{\esper[\E^{h_NX_N}]} = \left.\frac{d(\log\esper[\E^{zX_{N}}])}{dz}\right|_{z=h_N}; \\
v_N &= \var(X_{N,h_N}) = \left.\frac{d(a_N)}{dz}\right|_{z=h_N} = \left.\frac{d^2(\log\esper[\E^{zX_{N}}])}{dz^2}\right|_{z=h_N}.
\end{align*}
Notice that, when $h$ goes from $0$ to $+\infty$, the expectation $\esper[X_{N,h}]$ increases from $0$ to $M_N = \|X_N\|_\infty$. Therefore, there is a unique solution to the equation $a_N=\esper[X_{N,h_N}]$ for $0<a_N<M_N$, and the parameters make sense. Let us now prove the asymptotic estimates of the third step, under the hypothesis of asymptotic normality after tilting ($\eps_N = o(1)$). We set
$$d\Q_N[x] = d\proba_{N,h_N}\left[\frac{x-a_N}{\sqrt{v_N}}\right];$$
this is the distribution of the scaled random variable considered in the second step of the general scheme.
We denote:
$$
G_{N}(s) = \int_{-\infty}^s d\mathbb{Q}_{N}[x] \qquad;\qquad G(s) = \frac{1}{\sqrt{2\pi}} \int_{-\infty}^s \E^{-\frac{x^2}{2}}\DD{x}.$$
We have by hypothesis $|G_N(s)-G(s)| = O(\eps_N)$ uniformly in $s \in \R$. We now compute:
\begin{align*}
\proba_{\beta}[X_N \geq a_N] &= \int_{a_N}^\infty \proba_{N}[\!\DD{x}] = \esper[\E^{h_NX_N}]\,\int_{a_N}^\infty \E^{-h_N x}\, \proba_{N,h_N}[\!\DD{x}] \\
&= \esper[\E^{h_NX_N}] \,\E^{-h_N a_N} \int_{0}^\infty \exp(-h_N\sqrt{v_N}\,y)\, \mathbb{Q}_{N}[\!\DD{y}]
\end{align*}
and the integral $I_N$ on the right-hand side is equal to
\begin{align*}
&\int_{0}^\infty \E^{-h_N\sqrt{v_N}\,y}\, \mathbb{Q}_{(\beta,\delta_N)}[\!\DD{y}] = h_N\sqrt{v_N}\,\int_0^\infty\E^{-h_N\sqrt{v_N}\,y}\, (G_{N}(y)-G_{N}(0))\DD{y} \\
&= h_N\sqrt{v_N}\,\int_0^\infty\E^{-h_N\sqrt{v_N}\,y}\, (G(y)-G(0))\DD{y} + O\!\left(\eps_Nh_N\sqrt{v_N} \int_0^\infty\E^{-h_N \sqrt{v_N}\,y}\DD{y}\right)\\
& = \frac{1}{\sqrt{2\pi}} \int_0^\infty \E^{-h_N\sqrt{v_N}\,y-\frac{y^2}{2}}\DD{y} + O(\eps_N) \\
&=\frac{\E^{\frac{(h_N)^2v_N}{2}}}{\sqrt{2\pi}} \int_{h_N\sqrt{v_N}}^\infty \exp\!\left(-\frac{Y^2}{2}\right)\DD{Y} + O(\eps_N)= \frac{1}{h_N\sqrt{2\pi v_N}} + O\left(\frac{1}{(h_N\sqrt{v_N})^3}\right)+  O(\eps_N).
\end{align*}
On the last line, we have used an integration by parts in order to get the classical estimate of the tail of the Gaussian distribution: for $M \to +\infty$,
\begin{align*}
\int_M^\infty \E^{-\frac{Y^2}{2}}\DD{Y} = \frac{\E^{-\frac{M^2}{2}}}{M} - \int_M^\infty \frac{\E^{-\frac{Y^2}{2}}}{Y^2}\DD{Y}\qquad;\qquad \int_M^\infty \E^{-\frac{Y^2}{2}}\DD{Y} = \frac{\E^{-\frac{M^2}{2}}}{M}\left(1+O\!\left(\frac{1}{M^2}\right)\right).
\end{align*}
The result follows immediately in the two cases $\eps_N \ll \frac{1}{h_N\sqrt{v_N}}$ and $\eps_N \lesssim \frac{1}{h_N\sqrt{v_N}}$.
\end{proof}

\subsection{Main results and outline of the paper} \label{subsec:main_results}
Before stating the main results of this paper (Theorems \ref{thm:super_moderate} and \ref{thm:super_large}), let us describe informally the application of the general scheme for computing the probabilities of deviation of $X_N = \log|P_N(1)|$, the real part of the logarithm of the characteristic polynomial of a Haar-distributed unitary matrix $U_N$ with size $N$ ($\beta=2$). In Section \ref{sec:estimations}, we shall prove that the parameters $h_N$, $a_N$ and $v_N$ of the general scheme of approximation are related in this case by the following formul{\ae}:
\begin{align*}
a_N &= \frac{h_N}{2}\,\log\!\left(\frac{N}{2h_N}\right) + N\left(\left(1+\frac{h_N}{N}\right)\log\!\left(1+\frac{h_N}{N}\right) - \left(1+\frac{h_N}{2N}\right)\log\!\left(1+\frac{h_N}{2N}\right)\right)\\
&\quad+\frac{1}{12}\left(\frac{1}{N+\frac{h_N}{2}} - \frac{1}{h_N}-\frac{1}{N+h_N}\right)+O\!\left(\frac{1}{(h_N)^3}\right);\\
v_N &= \frac{1}{2}\log\!\left(\frac{N}{2h_N}\right) + \left(\log\!\left(1+\frac{h_N}{N}\right) - \frac{1}{2}\,\log\!\left(1+\frac{h_N}{2N}\right)\right)\\
&\quad+\frac{1}{12}\left( \frac{1}{(N+h_N)^2} - \frac{1}{2(N+\frac{h_N}{2})^2}+ \frac{1}{(h_N)^2}\right)+O\!\left(\frac{1}{(h_N)^4}\right).
\end{align*}
Suppose in particular that $\log N \ll a_N \ll N$. This assumption will turn out to be equivalent to $1\ll h_N \ll N$, and we shall then be able to simplify the equations above:
\begin{align*}
a_N &=  N\,\theta\!\left(\frac{h_N}{2N}\right) -\frac{1}{12h_N}+o\!\left(\frac{1}{h_N}\right);\\
v_N &= \frac{1}{2}\,\theta'\!\left(\frac{h_N}{2N}\right)+\frac{1}{12(h_N)^2}+o\!\left(\frac{1}{(h_N)^2}\right);\\
h_N&=2N\,\theta^{-1}\left(\frac{a_N}{N}\right) + \frac{1}{12a_N}+ o\!\left(\frac{1}{a_N}\right)
\end{align*}
where $\theta(x)=(1+2x)\,\log(1+2x) - (1+x)\log(1+x)-x\log(4x)$ is a continuous bijection from $\R_+$ to $[0,\log 2)$, and $\theta^{-1}$ is its functional inverse; see Figure \ref{fig:theta}. 

\begin{figure}[ht]
\begin{center}        
\begin{tikzpicture}[xscale=1,yscale=5]
\draw [thick, NavyBlue, domain=0.001:10, smooth, samples=500] plot ({\x},{(1+2*\x)*ln(1+2*\x) - (1+\x)*ln(1+\x) - \x*ln(4*\x)});
\draw [NavyBlue] (10.5,0.7) node {$\theta(x)$};
\draw [->] (-0.2,0) -- (10.5,0);
\draw [->] (0,-0.04) -- (0,1);
\draw (5,-0.04) -- (5,0);
\draw (0,-0.08) node {$0$};
\draw (5,-0.08) node {$5$};
\draw (10,-0.08) node {$10$};
\draw (10,-0.04) -- (10,0);
\draw [dashed] (0,0.693) -- (10,0.693);
\draw (-0.5,0.693) node {$\log 2$};
\draw (-0.4,0) node {$0$};
\end{tikzpicture}
\caption{The function $x \mapsto \theta(x)$.}\label{fig:theta}
\end{center}
\end{figure}
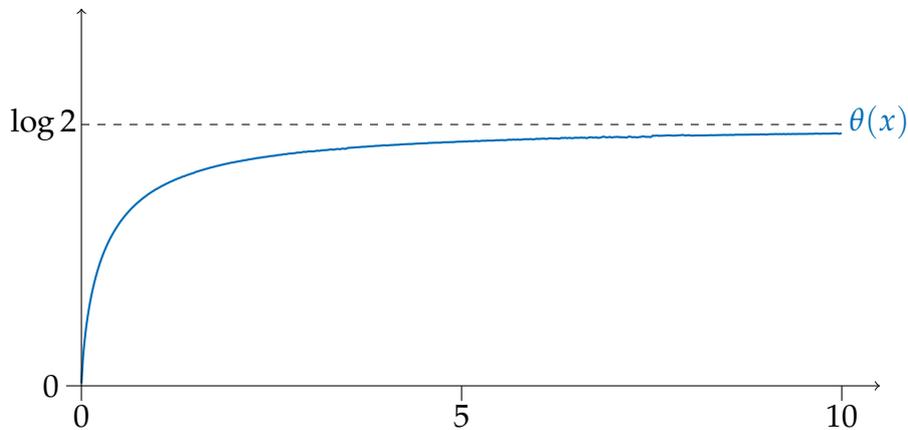

\noindent In a neighborhood of $0$, $\theta(x)$ is equivalent to $x|\log x|$, $\theta^{-1}(x)$ is equivalent to $\frac{x}{|\log x|}$, and $\theta'(x)$ is equivalent to $|\log x|$. Therefore, the equations above imply that for $\log N \ll a_N \ll N$,
$$h_N \simeq_{N \to +\infty} \frac{2a_N }{\log(\frac{N}{a_N})}\qquad;\qquad v_N \simeq_{N \to +\infty} \frac{1}{2}\,\log\!\left(\frac{N}{h_N}\right)\simeq_{N \to +\infty} \frac{1}{2}\,\log\!\left(\frac{N}{a_N}\right).$$
In Section \ref{sec:CLT}, we shall prove the following upper bound on Kolmogorov distances:
$$\dkol\left(\frac{X_{N,h_N}-a_N}{\sqrt{v_N}},\,\mathcal{N}_{\R}(0,1)\right) = O\!\left(\frac{1}{h_N\,(\log(\frac{N}{h_N}))^{\frac{3}{2}}}\right)$$
 for any sequence $(a_N)_{N \in \N}$ such that $\log N \lesssim a_N$. In particular, in the regime $\log N \ll a_N \ll N$, we have
 $$h_N \to +\infty\qquad;\qquad v_N \to +\infty\qquad;\qquad\eps_N = \frac{1}{h_N\,(v_N)^{\frac{3}{2}}} \ll \frac{1}{h_N\,\sqrt{v_N}}\ll 1,$$
 so we have strong asymptotic normality of the tilted sequence $(X_{N,h_N})_{N \in \N}$. The general scheme ensures then that:
$$\proba_2[X_N \geq a_N] = \frac{\esper[\E^{h_N(X_N-a_N)}]}{h_N\sqrt{2\pi v_N}}\,(1+o(1)) = \frac{\esper[\E^{h_N(X_N-a_N)}]}{2a_N}\,\sqrt{\frac{1}{\pi}\,\log\!\left(\frac{N}{a_N}\right)}\,(1+o(1)).$$
Then, it remains to analyse the Laplace transform $\esper[\E^{h_NX_N}]$, and to give an asymptotic equivalent of it. This additional computation will lead to the following result: if $\log N \ll a_N \ll N$ and $\vartheta_N = \theta^{-1}(\frac{a_N}{N})$, then
$$\proba_2[X_N \geq a_N] \simeq_{N \to +\infty} C_2\,(a_N)^{-\frac{13}{12}}\,\left(\log\left(\frac{N}{a_N}\right)\right)^{\!\frac{7}{12}}\,\exp(-f(N,\vartheta_N))$$
 with $C_2 = 2^{-\frac{11}{12}}\,\pi^{-\frac{1}{2}}\,\exp(\zeta'(-1))$, and
 $$f(N,\vartheta_N) = (N\vartheta_N)^2\,\log\!\left(1+\frac{1}{4\vartheta_N(1+\vartheta_N)}\right) + \frac{N^2}{2}\log\!\left(1+\frac{(\vartheta_N)^2}{1+2\vartheta_N}\right).$$
 Moreover, under the stronger hypothesis that $\log N \ll a_N \lesssim N^{\frac{1}{3}}$, the term in the exponential writes as
 $$-f(N,\vartheta_N) = -Na_N\vartheta_N+\frac{(N\vartheta_N)^2}{2}+o(1).$$
We shall also state results  when $\sqrt{\log N} \ll a_N \lesssim \log N$, and when $a_N$ is of order $N$. We shall deal with the case of a general parameter $\beta>0$. In this setting it will sometimes be convenient to modify a tiny bit the function $\theta$, and to set:
$$\theta_{N,\beta}(x) = \theta(x) + \frac{\beta'-1}{2\beta'N} (\log 2 + \log (1+x) - \log(1+2x)).$$
Note that $\theta_{N,2}=\theta$ for any $N$, and also that $\lim_{N \to \infty} \theta_{N,\beta}(x)=\theta(x)$ uniformly on the real line for any fixed parameter $\beta>0$. For $N$ large enough, the function $\theta_{N,\beta}$ is again a continuous bijection, this time from $\R_+$ to $[\frac{\beta'-1}{2\beta'N}\log 2,\log 2)$; see Lemma \ref{lem:modification_theta}.\bigskip

We fix $\beta>0$; in all the estimates of Theorems \ref{thm:super_moderate} and \ref{thm:super_large} below, the $O(\cdot)$'s and $o(\cdot)$'s involve implied constants which are allowed to depend on the parameter $\beta$. In order to make this clear, we add an index $\beta$ to all such estimates, so for instance we shall denote $O_\beta(y)$ a quantity whose module is smaller than $C_\beta|y|$ for some positive constant $C_\beta$ which can only depend on $\beta$.

\begin{main}[Sharp moderate deviations of the characteristic polynomial of the C$\beta$E ensemble]\label{thm:super_moderate}
Let $X_N$ be the real part of the logarithm of the characteristic polynomial of a random matrix from the C$\beta$E, and let $(x_N)_{N \in \N}$ be a sequence of positive numbers such that $\log N \lesssim x_N \ll N$. 

\begin{enumerate}
\item We have:
$$\proba_{\beta}[X_N \geq x_N] = \frac{\E^{-\Lambda_{N,\beta}^*(x_N)}}{x_N}\sqrt{\frac{1}{2\pi\beta}\log\!\left(\frac{N}{x_N}\right)} \left(1+O_\beta\!\left(\frac{1}{\log(\frac{N}{x_N})}\right)\right),$$
where $\Lambda_{N,\beta}^*(\cdot)$ is the Legendre--Fenchel conjugate of $\Lambda_{N,\beta}(h)=\log \esper_\beta[\E^{hX_N}]$, which is a convex function.

\item Suppose that $x_N$ is of order $\log N$. Then, the formula above rewrites as
$$\proba_\beta[X_N \geq x_N] = \frac{\E^{-\frac{\beta\,(x_N)^2}{2\log N}}}{x_N}\sqrt{\frac{\log N}{2\pi\beta}} \,\Psi_\beta\!\left(\frac{\beta x_N}{\log N}\right)\left(1+O_\beta\!\left(\frac{\log \log N}{\log N}\right)\right),$$
with a function $\Psi_\beta$ that can be expressed in terms of the function $\Psi=\Psi_2$:
$$\Psi_\beta(\beta t) = (\Psi(2t))^{\beta'}\,\frac{\Gamma(1+2\beta't)\,\Gamma(1+t)^{\beta'+1}}{\Gamma(1+\beta't)^2\,\Gamma(1+2t)^{\frac{\beta'+1}{2}}}\,\,\exp\!\left(\frac{1-\beta'^2}{12\beta'}\int_0^\infty \frac{(1-\E^{-\beta'ts})^2}{s}\, \eta_\beta(s)\DD{s}\right).$$

\item Suppose now that $\log N \ll x_N \ll N$, and set $\vartheta_N = (\theta_{N,\beta})^{-1}(\frac{x_N}{N})$, which is asymptotically equivalent to $\frac{x_N}{N\log(\frac{N}{x_N})}$. Then, 
\begin{align*}
\proba_\beta[X_N \geq x_N]&= C_\beta\,(x_N)^{\!\frac{\beta'^2-15\beta'+1}{12\beta'}}\left(\log\!\left(\frac{N}{x_N}\right)\right)^{\!\frac{9\beta'-1-\beta'^2}{12\beta'}} \exp\!\left(-f(N,\beta,\vartheta_N)+o(1)\right),
\end{align*}
where $C_\beta$ is an explicit positive constant given by Equation \eqref{eq:cbeta}, and
$$f(N,\beta,\vartheta_N) = \beta' (N \vartheta_N)^2 \log\left(1+\frac{1}{4\vartheta_N(1+\vartheta_N)}\right) + \frac{N^2\beta'-N(\beta'-1)}{2} \log\!\left(1+\frac{(\vartheta_N)^2}{1+2\vartheta_N}\right).$$

\item Suppose more precisely that $\log N \ll x_N \lesssim N^{1/3}$, and set $b_N = N\vartheta_N = N\,(\theta_{N,\beta})^{-1}(\frac{x_N}{N})$, which is asymptotically equivalent to $\frac{x_N}{\log(\frac{N}{x_N})}$. Then, 
$$\proba_\beta[X_N \geq x_N] = C_\beta\,(x_N)^{\!\frac{\beta'^2-15\beta'+1}{12\beta'}}\left(\log\!\left(\frac{N}{x_N}\right)\right)^{\!\frac{9\beta'-1-\beta'^2}{12\beta'}}\,\exp\!\left(\beta'\!\left(- x_Nb_N+\frac{(b_N)^2}{2}\right)+o(1) \right).$$
\end{enumerate} 
\end{main}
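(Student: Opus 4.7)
The plan is to execute the General Scheme of Subsection~\ref{subsec:general_method} for $(X_N)_{N\in\N}$ under $\proba_\beta$, with the explicit product formula \eqref{eq:laplace_transform} (specialised to $\delta=0$) as sole analytic input:
$$\Lambda_{N,\beta}(h)=\log\esper_\beta[\E^{hX_N}]=\sum_{k=0}^{N-1}\Bigl(\log\Gamma(\beta'k+1)+\log\Gamma(\beta'k+1+h)-2\log\Gamma\bigl(\beta'k+1+\tfrac{h}{2}\bigr)\Bigr).$$
A single asymptotic analysis of $\Lambda_{N,\beta}$ and its first three derivatives, uniform in $h$ on a zone containing all tilting parameters $h_N$ arising in the range $\log N\lesssim x_N\ll N$, will produce all four items. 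The analysis proceeds by applying Stirling's expansion to each $\log\Gamma$ and then an Euler--Maclaurin summation in the index $k$; for $\beta=2$ this specialises to Barnes~$G$ identities (as in Equation~\eqref{eq:mod_gaussian_haar}), and for general $\beta$ one develops the $\beta$-deformed analogue.

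\textbf{Steps~1--2 of the scheme.} Inverting $\Lambda'_{N,\beta}(h_N)=x_N$ yields $h_N\sim 2\beta' x_N/\log(N/x_N)$ and $v_N=\Lambda''_{N,\beta}(h_N)\sim\log(N/x_N)/(2\beta')$, so that $h_N\sqrt{2\pi v_N}\sim x_N\sqrt{2\pi\beta/\log(N/x_N)}$. A further differentiation gives $\Lambda'''_{N,\beta}(h_N)=O_\beta(1/h_N)$ uniformly on the zone, and therefore on $|\xi|\lesssim v_N$ the characteristic function of $X_{N,h_N}$ differs from its Gaussian approximation by a factor $\exp(O_\beta(|\xi|^3/h_N))$. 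Lemma~\ref{lem:control_zone} (applied with $M=O_\beta(1/h_N)$) then delivers
$$\dkol\!\left(\frac{X_{N,h_N}-x_N}{\sqrt{v_N}},\,\mathcal{N}_\R(0,1)\right)=O_\beta\!\left(\frac{1}{h_N\,(\log(N/x_N))^{3/2}}\right),$$
which is $o(1/(h_N\sqrt{v_N}))$ throughout the range $\log N\lesssim x_N\ll N$. Step~3 of the General Scheme then gives the master identity
$$\proba_\beta[X_N\geq x_N]=\frac{\exp(-\Lambda_{N,\beta}^*(x_N))}{h_N\sqrt{2\pi v_N}}\bigl(1+O_\beta(1/\log(N/x_N))\bigr),$$
and substituting the value of $h_N\sqrt{2\pi v_N}$ proves item~(1), the convexity of $\Lambda_{N,\beta}$ being automatic as it is a cumulant generating function.

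\textbf{Items~(2)--(4).} These follow by specialising the master identity and expanding $\Lambda_{N,\beta}^*(x_N)=h_Nx_N-\Lambda_{N,\beta}(h_N)$ to the depth appropriate to each regime. In item~(2), where $x_N\asymp\log N$ and $h_N$ stays bounded, $\Lambda_{N,\beta}(h_N)=\frac{\log N}{2\beta}h_N^2+\log\Psi_\beta(h_N)+O_\beta(\log\log N/\log N)$; the announced expression for $\Psi_\beta$ emerges by isolating the Haar case inside the $\beta$-product (so that $\Psi(2t)^{\beta'}$ appears via a Barnes~$G$ computation) and converting the remaining Gamma ratios into the functions $\varphi$, $\phi_\beta$ and $\eta_\beta$ through the Malmsten integral representation of $\log\Gamma$. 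In item~(3), with $1\ll h_N\ll N$, the change of variables $\vartheta_N=(\theta_{N,\beta})^{-1}(x_N/N)$ (so that $h_N=2N\beta'\vartheta_N$ modulo lower order) transforms the leading Euler--Maclaurin piece of $\Lambda_{N,\beta}^*$ into exactly $f(N,\beta,\vartheta_N)$; the Stirling subleading logarithmic corrections together with the prefactor $1/(h_N\sqrt{v_N})$ then combine into the announced power prefactor, the explicit constant $C_\beta$ arising from the $\zeta'(-1)$-type constants inside Stirling. Item~(4) is the further Taylor expansion of $f(N,\beta,\vartheta_N)$ near $\vartheta_N=0$: its cubic remainder is $O(N^2\vartheta_N^3)=O(x_N^3/(N\log^3(N/x_N)))$, which is $o(1)$ under the hypothesis $x_N\lesssim N^{1/3}$ and is therefore absorbed in the $o(1)$ inside the exponential.

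\textbf{Main obstacle.} The delicate point is the uniform asymptotic analysis underlying Step~1: one needs expansions of $\Lambda_{N,\beta}$ and its first three derivatives that are simultaneously uniform in $h$ on a zone containing $h_N$ of arbitrary size between $O(1)$ and $o(N)$, and precise enough to isolate both the $O(1)$ $\zeta'(-1)$-type constants that enter $C_\beta$ and the $O(1/N)$ finite-size correction $\theta_{N,\beta}-\theta$. The Euler--Maclaurin remainders genuinely depend on both $h$ and $N$, and for $\beta\neq 2$ there is no Barnes~$G$ identity to fall back on, so the $\beta$-deformed substitute must be built and its remainders controlled by hand.
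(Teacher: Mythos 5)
Your outline follows the same high-level strategy as the paper's proof: execute the General Scheme (tilting, Berry--Esseen via a zone of control, then Legendre--Fenchel bookkeeping), starting from the exact product formula \eqref{eq:laplace_transform} with $\delta=0$, and obtain items (2)--(4) as successive specialisations of the master identity. Your first-order identifications of $h_N$, $v_{N,\beta}$ and $h_N\sqrt{2\pi v_{N,\beta}}$ are correct (recall $\beta=2\beta'$ so $2\beta'x_N/\log(N/x_N)=\beta x_N/\log(N/x_N)$), and the use of Lemma~\ref{lem:control_zone} with $M=O_\beta(1/h_N)$ is exactly what the paper does to obtain Equation~\eqref{eq:kolmogorov}.

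Where you diverge from the paper is the technical mechanism for the general-$\beta$ asymptotics, and this is precisely the point you flag as the ``main obstacle.'' You propose to Stirling-expand each $\log\Gamma$ and then run an Euler--Maclaurin summation over $k$, and to ``develop the $\beta$-deformed analogue'' of the Barnes~$G$ identities by hand. The paper instead proves a Comparison Theorem (Theorem~\ref{thm:comparison}): using the Binet integral representation of $\log\Gamma$ it decomposes $\Lambda_{N,\beta}(z)=m(z)+g_{N,\beta}(z)+k_{N,\beta}(z)$ and exhibits
$$\Lambda_{N,\beta}(z)=\beta'\,\Lambda_N\!\left(\tfrac{z}{\beta'}\right)+\tfrac{\beta'-1}{2}\Bigl(2\ell\bigl(N+\tfrac{z}{2\beta'}\bigr)-\ell(N)-\ell\bigl(N+\tfrac{z}{\beta'}\bigr)\Bigr)+m(z)-\tfrac{\beta'+1}{2}m\bigl(\tfrac{z}{\beta'}\bigr)+\int_0^\infty(\cdots)\phi_\beta(s)\DD{s},$$
where the correction terms are either explicit Gamma-function expressions or a Laplace-type integral controlled uniformly by Lemma~\ref{lem:laplace_method}. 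This reduces the whole problem to the Haar case $\beta=2$, whose uniform asymptotics \emph{are} available from the Barnes--$G$ Stirling expansion \eqref{eq:stirling_barnes} and the polygamma expansions; the uniformity in $h$ you worry about then comes for free, because the correction terms decouple cleanly into $N$-dependent and $z$-dependent pieces. Your Euler--Maclaurin route is not wrong in principle, but it would leave you to re-derive by hand exactly the kind of uniform control that the Comparison Theorem packages up. Since you explicitly leave this as an open obstacle rather than resolving it, the proposal is best described as structurally sound but missing the one genuinely new idea of the paper --- the reduction of $\Lambda_{N,\beta}$ to $\beta'\Lambda_N(\cdot/\beta')$ plus tractable corrections --- that makes the uniform analysis feasible. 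The rest of your argument (inverting $\Lambda'_{N,\beta}$ via $\theta_{N,\beta}$, expanding $f(N,\beta,\vartheta_N)$ near $\vartheta_N=0$ for item (4)) matches the paper's Subsection~\ref{subsec:precise_moderate_deviations}.
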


\noindent The explicit constant $C_\beta$ above is given by:
\begin{equation}
C_\beta =  \frac{2^{\frac{1}{12\beta'}}\,\pi^{\frac{\beta'-3}{4}}}{\beta}\, \exp\left(\frac{1-\beta'^2}{12\beta'}(A_\beta+\log\beta') + \beta'\zeta'(-1)\right),\label{eq:cbeta}
\end{equation}
with 
$$A_\beta=\int_{s=0}^\infty(1-\E^{-\frac{s}{2}})^2\,\frac{\eta_\beta(s)}{s}\DD{s} + \int_{t=1}^\infty \int_{s=0}^\infty \frac{2\E^{-\frac{st}{2}}-\E^{-st}}{t}\, \eta_\beta'(s)\DD{s}\DD{t}.$$
If $\beta=2$ and $\log N \ll x_N \ll N$, we recover as a particular case of Theorem \ref{thm:super_moderate} the estimates stated informally at the beginning of this paragraph. On the other hand, the asymptotics of $\proba_{\beta}[X_N \geq x_N]$ when $x_N \lesssim \log N$ appear in \cite[Theorem 4.16]{DHR19}, and it is not very difficult to transform the formula of \emph{loc.~cit.} in order to obtain our formula for $\Psi_{\beta}$. When $x_N \gg \log N$, as far as we know, even in the simpler case $\beta=2$, our estimates of $\proba_{\beta}[X_N \geq x_N]$ are new. One of the main interest of these formulas is the appearance of the "non-standard" exponents 
$$\frac{\beta'^2-15\beta'+1}{12\beta'} \quad \text{and}\quad \frac{9\beta'-1-\beta'^2}{12\beta'};$$
here by non-standard we mean different from the usual exponents $\pm1$ or $\pm\frac{1}{2}$ that appear in central limit theorems and in Bahadur--Rao type large deviation estimates. We believe that similar exponents can appear in the more general setting of moderate or large deviations of sequences of random variables that admit a mod-Gaussian renormalisation. In particular, this should be the case for the arithmetic analogues $\chi_N$ of the random variables $X_N$, which are also mod-Gaussian convergent (see \cite{KN12}).\medskip

Let us now consider the regime where $x_N$ is of order $N$. We denote 
$$I(x)=-\frac{(1-4x^2)}{2}\log (1+2x) - x^2 \log (4x) +(1-x^2)\log(1+x),$$
$x$ being an arbitrary positive real number.

\begin{main}[Sharp large deviations of the characteristic polynomial of the C$\beta$E ensemble]\label{thm:super_large} Consider parameters $\alpha_0$ in the interval $[\alpha,\log2]$, where $\alpha$ is an arbitrary fixed postive constant.
\begin{enumerate}
    \item In the same setting as in Theorem \ref{thm:super_moderate}, we have
    $$\limsup_{N \to \infty} \left(\frac{N \,\proba_\beta[X_N \geq \alpha_0 N]}{(\theta^{-1}(\alpha_0))^2}\,\exp(\Lambda_{N,\beta}^*(\alpha_0N)) \right) \leq M_{\alpha,\beta}$$
    for some finite constant $M_{\alpha,\beta}$ which only depends on the two quantities $\alpha$ and $\beta$.
    \item If $\alpha_0$ belongs to the interval $[\alpha,\alpha']$ with $\alpha'$ small enough, then we also have
    $$\liminf_{N \to \infty} \left(N\, \proba_\beta[X_N \geq \alpha_0 N]\,\exp(\Lambda_{N,\beta}^*(\alpha_0N)) \right) \geq \,m_{\alpha,\alpha',\beta}$$
    with another finite constant $m_{\alpha,\alpha',\beta}$ which depends only on $\alpha$, $\alpha'$ and $\beta$ and which is strictly positive.
    \item If $\beta=2$, then
    $$\Lambda_{N,2}^*(\alpha_0N) = N^2 I(\theta^{-1}(\alpha_0)) + \frac{\log N}{12} + O_\alpha(1).$$
    Therefore the upper bound rewrites in this case as:
    $$\limsup_{N \to \infty} \left(\frac{N^{\frac{13}{12}}\,\proba_2[X_N \geq \alpha_0 N]}{(\theta^{-1}(\alpha_0))^2}\,\exp\left(N^2\,I(\theta^{-1}(\alpha_0))\right)\right) \leq M_\alpha<+\infty,$$
    and we have a similar statement for the lower bound if $\alpha_0 \in [\alpha,\alpha']$.
    \item If $\beta\neq 2$, then
    $$\Lambda_{N,\beta}^*(\alpha_0N) = N^2 \beta'\,I(\theta^{-1}(\alpha_0)) + O_{\alpha_0,\beta}(N).$$
    Therefore the sequence of random variables $(\frac{X_N}{N})_{N \in \N}$ satisfies under $\proba_\beta$ a principle of large deviations with speed $N^2$ and rate function $\beta'\,I\circ \theta^{-1}$ (see Figure \ref{fig:rate_function}).
\end{enumerate}  
\end{main}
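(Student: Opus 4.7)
The plan is to apply the general scheme of Subsection~\ref{subsec:general_method} to $X_N$ with $a_N = \alpha_0 N$ in the regime where $\alpha_0$ is bounded in $[\alpha,\log 2)$ (the endpoint $\alpha_0=\log 2$ is trivial since $X_N \leq N\log 2$). From the asymptotics of the tilting parameter (already recorded informally after Theorem~\ref{thm:super_moderate} and to be proven in Section~\ref{sec:estimations}), the equation $\esper_\beta[X_{N,h_N}]=\alpha_0 N$ forces $h_N/N \to 2\,\theta^{-1}(\alpha_0)$, uniformly on any compact subset of $[\alpha,\log 2)$; the deformation $\theta_{N,\beta}$ produces only an $O(1)$ correction. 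In contrast with the moderate regime, the variance $v_N = \Lambda_{N,\beta}''(h_N)$ now stays bounded, comparable to $\tfrac12\,\theta'(\theta^{-1}(\alpha_0))$, and the product $h_N\sqrt{v_N}$ is of order $N$.

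Next I would invoke the zone-of-control estimate (Lemma~\ref{lem:control_zone}, as applied in Section~\ref{sec:CLT}) for the tilted family $X_{N,h_N}$. Because $v_N = O(1)$, the Kolmogorov bound furnished by the cubic control on $\log \esper[\E^{\I\xi X_{N,h_N}}]$ is $\eps_N = O(1/h_N^{3/2})$ uniformly in $\alpha_0 \in [\alpha,\log 2-\eta]$. This is only $\lesssim 1/(h_N\sqrt{v_N})$, not $\ll$, so the general scheme yields the weaker, Bahadur--Rao-type upper bound
\[
\proba_\beta[X_N\geq \alpha_0 N] \lesssim \frac{\E^{-\Lambda_{N,\beta}^*(\alpha_0 N)}}{h_N\sqrt{2\pi v_N}},
\]
which, after rewriting $h_N\sqrt{v_N}$ as $\Theta((\theta^{-1}(\alpha_0))^{-2}\,N)$ via its explicit expansion, gives part~(1). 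For the matching lower bound in part~(2), restrict to $\alpha_0 \in [\alpha,\alpha']$ with $\alpha'$ small: then $h_N$ remains bounded, the implied constant in the zone of control shrinks, and one can improve to $\eps_N \ll 1/(h_N\sqrt{v_N})$, so that the third step of the scheme produces a genuine asymptotic equivalence (hence a strictly positive $\liminf$).

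Parts~(3) and (4) reduce to analyzing $\Lambda_{N,\beta}^*(\alpha_0 N) = h_N\,\alpha_0 N - \Lambda_{N,\beta}(h_N)$ via the Laplace-transform formula~\eqref{eq:laplace_transform} at $\delta=0$, inserting the expansion of $h_N = 2N\,\theta^{-1}(\alpha_0) + O(1)$. For $\beta=2$ the formula collapses to ratios of Barnes $G$-values, and the Stirling expansion~\eqref{eq:stirling_barnes} (which contains a $\tfrac12 z^2\log z - \tfrac1{12}\log z$ term) yields the precise form $N^2 I(\theta^{-1}(\alpha_0)) + \tfrac{\log N}{12} + O_\alpha(1)$. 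For $\beta\neq 2$ only Gamma functions appear (no Barnes $G$ identity reduces the product), so Stirling applied termwise, or equivalently an Euler--Maclaurin summation, yields the leading term $\beta' N^2 I(\theta^{-1}(\alpha_0))$ with an $O_{\alpha_0,\beta}(N)$ remainder, enough for the large-deviation principle.

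The main obstacle will be the uniformity across $\alpha_0 \in [\alpha,\log 2)$ of the various expansions. One must prove the expansions of $h_N$, $v_N$, and $\Lambda_{N,\beta}(h_N)$ with error terms controlled uniformly by a function of $\alpha$ alone (and of $\alpha,\alpha'$ for the lower bound), and verify that the cubic remainder bounding the characteristic function $\log \esper[\E^{\I\xi X_{N,h_N}}]$ is uniform on the zone $|\xi| \leq K t_N$. Near $\alpha_0 = \log 2$ the quantity $\theta^{-1}(\alpha_0)$ diverges, so excluding a neighborhood of this endpoint, as the statement does, is essential; the discrepancy between $\beta=2$ (Barnes $G$) and $\beta\neq 2$ (only Gamma) is precisely what forces the two different precisions stated in parts~(3) and~(4).
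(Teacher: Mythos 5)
Your overall outline for the upper bound (tilt by $h_N\sim 2\beta'N\,\theta^{-1}(\alpha_0)$, observe that the post-tilt variance $v_{N,\beta}$ stays bounded so the general scheme only yields a $\lesssim$ rather than an asymptotic equivalence, convert $h_N\sqrt{v_{N,\beta}}$ into $\Theta_{\alpha,\beta}(N/(\theta^{-1}(\alpha_0))^2)$) matches the paper's argument in spirit, and parts~(3)--(4) are correctly identified as exercises in asymptotic expansion of the Legendre--Fenchel transform. However, your explanation of the lower bound contains a genuine error. You write that for $\alpha_0\in[\alpha,\alpha']$ with $\alpha'$ small, ``$h_N$ remains bounded'' and that $\eps_N$ improves to $\ll 1/(h_N\sqrt{v_{N,\beta}})$. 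Both claims are false: since $\lambda_N/N\to L_0=\theta^{-1}(\alpha_0)>0$, the tilting parameter $h_N=\beta\lambda_N$ is always of order $N$, regardless of how small $\alpha_0\geq\alpha$ is; and the Kolmogorov distance furnished by Lemma~\ref{lem:control_zone} is $\eps_N = O_\beta\!\bigl(\tfrac{1}{\delta_N\,(v_{N,\beta})^{3/2}}\bigr)$, which remains of the same order as $\tfrac{1}{h_N\sqrt{v_{N,\beta}}}$ because the ratio is $1/v_{N,\beta}=O(1)$. There is no improvement to $\ll$ by shrinking $\alpha_0$. What restricting to small $\alpha_0$ actually buys is that $v_{N,\beta}\approx\tfrac{1}{2\beta'}\log\bigl(1+\tfrac{1}{4L_0(1+L_0)}\bigr)$ becomes \emph{large} (it diverges as $L_0\to 0$), so the multiplicative error factor $1-c_\beta/v_{N,\beta}$ in the lower bound on the integral $I_N$ stays $\geq 1/2$ rather than possibly becoming negative. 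Without this observation your proof of part~(2) does not go through.

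A second, more minor discrepancy is in your treatment of $\beta\neq 2$ for part~(4): you propose applying Stirling termwise or Euler--Maclaurin to the product of Gamma functions. The paper instead invokes the Comparison Theorem~\ref{thm:comparison}, which expresses $\Lambda_{N,\beta}$ in terms of $\beta'\Lambda_N(\cdot/\beta')$ plus correction terms of lower order, and then reuses the $\beta=2$ analysis. Your route is plausible in principle, but it is not worked out and would need to reproduce uniformity in $\alpha_0$; the Comparison Theorem does this transfer in one clean step, and you should be aware that it is the paper's key tool here. You should also double-check your claim $\eps_N=O(1/h_N^{3/2})$: the correct scale is $O(1/(\delta_N\,v_{N,\beta}^{3/2}))$, which for bounded $v_{N,\beta}$ is $O_{\alpha,\beta}(1/N)$, not $O(1/N^{3/2})$; this does not affect your upper-bound conclusion but should not be stated incorrectly.
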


\begin{figure}[ht]
\begin{center}        
\begin{tikzpicture}[xscale=10,yscale=5]
\draw [->] (-0.02,0) -- (0.75,0);
\draw [->] (0,-0.03) -- (0,1.1);
\draw [dashed] (0.693,-0.03) -- (0.693,1);
\draw (0,-0.07) node {$0$};
\draw (0.693,-0.07) node {$\log 2$};
\draw (-0.02,1) -- (0,1);
\draw (-0.04,0) node {$0$};
\draw (-0.04,1) node {$1$};
\draw (0.67,1.12) [NavyBlue] node {$I\circ\theta^{-1}(x)$};
\draw [thick,NavyBlue] (0.002000, 0) -- (0.004000, 0) -- (0.006000, 0) -- (0.008000, 0) -- (0.01000, 0) -- (0.01200, 0.00002043) -- (0.01400, 0.00002043) -- (0.01600, 0.00004251) -- (0.01800, 0.00004251) -- (0.02000, 0.00004251) -- (0.02200, 0.00007121) -- (0.02400, 0.00007121) -- (0.02600, 0.0001060) -- (0.02800, 0.0001464) -- (0.03000, 0.0001464) -- (0.03200, 0.0001922) -- (0.03400, 0.0001922) -- (0.03600, 0.0002430) -- (0.03800, 0.0002430) -- (0.04000, 0.0002986) -- (0.04200, 0.0003588) -- (0.04400, 0.0003588) -- (0.04600, 0.0004233) -- (0.04800, 0.0004921) -- (0.05000, 0.0004921) -- (0.05200, 0.0005650) -- (0.05400, 0.0006417) -- (0.05600, 0.0006417) -- (0.05800, 0.0007223) -- (0.06000, 0.0008065) -- (0.06200, 0.0008065) -- (0.06400, 0.0008943) -- (0.06600, 0.0009855) -- (0.06800, 0.001080) -- (0.07000, 0.001178) -- (0.07200, 0.001178) -- (0.07400, 0.001279) -- (0.07600, 0.001383) -- (0.07800, 0.001490) -- (0.08000, 0.001600) -- (0.08200, 0.001600) -- (0.08400, 0.001713) -- (0.08600, 0.001829) -- (0.08800, 0.001947) -- (0.09000, 0.002068) -- (0.09200, 0.002192) -- (0.09400, 0.002318) -- (0.09600, 0.002447) -- (0.09800, 0.002578) -- (0.1000, 0.002712) -- (0.1020, 0.002848) -- (0.1040, 0.002986) -- (0.1060, 0.003127) -- (0.1080, 0.003270) -- (0.1100, 0.003415) -- (0.1120, 0.003562) -- (0.1140, 0.003711) -- (0.1160, 0.003862) -- (0.1180, 0.004015) -- (0.1200, 0.004170) -- (0.1220, 0.004327) -- (0.1240, 0.004487) -- (0.1260, 0.004647) -- (0.1280, 0.004810) -- (0.1300, 0.004975) -- (0.1320, 0.005141) -- (0.1340, 0.005479) -- (0.1360, 0.005651) -- (0.1380, 0.005824) -- (0.1400, 0.005999) -- (0.1420, 0.006175) -- (0.1440, 0.006533) -- (0.1460, 0.006714) -- (0.1480, 0.006896) -- (0.1500, 0.007081) -- (0.1520, 0.007453) -- (0.1540, 0.007642) -- (0.1560, 0.007832) -- (0.1580, 0.008216) -- (0.1600, 0.008410) -- (0.1620, 0.008605) -- (0.1640, 0.008999) -- (0.1660, 0.009198) -- (0.1680, 0.009600) -- (0.1700, 0.009803) -- (0.1720, 0.01001) -- (0.1740, 0.01042) -- (0.1760, 0.01063) -- (0.1780, 0.01104) -- (0.1800, 0.01126) -- (0.1820, 0.01168) -- (0.1840, 0.01189) -- (0.1860, 0.01233) -- (0.1880, 0.01254) -- (0.1900, 0.01298) -- (0.1920, 0.01342) -- (0.1940, 0.01364) -- (0.1960, 0.01409) -- (0.1980, 0.01432) -- (0.2000, 0.01477) -- (0.2020, 0.01522) -- (0.2040, 0.01568) -- (0.2060, 0.01591) -- (0.2080, 0.01638) -- (0.2100, 0.01684) -- (0.2120, 0.01731) -- (0.2140, 0.01755) -- (0.2160, 0.01803) -- (0.2180, 0.01850) -- (0.2200, 0.01898) -- (0.2220, 0.01947) -- (0.2240, 0.01995) -- (0.2260, 0.02044) -- (0.2280, 0.02069) -- (0.2300, 0.02118) -- (0.2320, 0.02168) -- (0.2340, 0.02217) -- (0.2360, 0.02267) -- (0.2380, 0.02343) -- (0.2400, 0.02393) -- (0.2420, 0.02444) -- (0.2440, 0.02495) -- (0.2460, 0.02546) -- (0.2480, 0.02598) -- (0.2500, 0.02649) -- (0.2520, 0.02701) -- (0.2540, 0.02779) -- (0.2560, 0.02832) -- (0.2580, 0.02884) -- (0.2600, 0.02963) -- (0.2620, 0.03016) -- (0.2640, 0.03069) -- (0.2660, 0.03149) -- (0.2680, 0.03202) -- (0.2700, 0.03282) -- (0.2720, 0.03336) -- (0.2740, 0.03417) -- (0.2760, 0.03471) -- (0.2780, 0.03553) -- (0.2800, 0.03607) -- (0.2820, 0.03689) -- (0.2840, 0.03744) -- (0.2860, 0.03826) -- (0.2880, 0.03909) -- (0.2900, 0.03992) -- (0.2920, 0.04047) -- (0.2940, 0.04130) -- (0.2960, 0.04213) -- (0.2980, 0.04297) -- (0.3000, 0.04381) -- (0.3020, 0.04465) -- (0.3040, 0.04549) -- (0.3060, 0.04634) -- (0.3080, 0.04718) -- (0.3100, 0.04803) -- (0.3120, 0.04888) -- (0.3140, 0.04973) -- (0.3160, 0.05058) -- (0.3180, 0.05172) -- (0.3200, 0.05257) -- (0.3220, 0.05343) -- (0.3240, 0.05428) -- (0.3260, 0.05543) -- (0.3280, 0.05629) -- (0.3300, 0.05744) -- (0.3320, 0.05830) -- (0.3340, 0.05945) -- (0.3360, 0.06031) -- (0.3380, 0.06146) -- (0.3400, 0.06262) -- (0.3420, 0.06377) -- (0.3440, 0.06464) -- (0.3460, 0.06580) -- (0.3480, 0.06695) -- (0.3500, 0.06811) -- (0.3520, 0.06927) -- (0.3540, 0.07043) -- (0.3560, 0.07159) -- (0.3580, 0.07275) -- (0.3600, 0.07421) -- (0.3620, 0.07537) -- (0.3640, 0.07653) -- (0.3660, 0.07798) -- (0.3680, 0.07915) -- (0.3700, 0.08031) -- (0.3720, 0.08176) -- (0.3740, 0.08321) -- (0.3760, 0.08438) -- (0.3780, 0.08583) -- (0.3800, 0.08728) -- (0.3820, 0.08873) -- (0.3840, 0.09018) -- (0.3860, 0.09163) -- (0.3880, 0.09308) -- (0.3900, 0.09453) -- (0.3920, 0.09597) -- (0.3940, 0.09771) -- (0.3960, 0.09915) -- (0.3980, 0.1006) -- (0.4000, 0.1023) -- (0.4020, 0.1041) -- (0.4040, 0.1055) -- (0.4060, 0.1072) -- (0.4080, 0.1089) -- (0.4100, 0.1107) -- (0.4120, 0.1124) -- (0.4140, 0.1141) -- (0.4160, 0.1161) -- (0.4180, 0.1178) -- (0.4200, 0.1195) -- (0.4220, 0.1215) -- (0.4240, 0.1235) -- (0.4260, 0.1252) -- (0.4280, 0.1272) -- (0.4300, 0.1291) -- (0.4320, 0.1311) -- (0.4340, 0.1331) -- (0.4360, 0.1353) -- (0.4380, 0.1372) -- (0.4400, 0.1395) -- (0.4420, 0.1414) -- (0.4440, 0.1436) -- (0.4460, 0.1459) -- (0.4480, 0.1481) -- (0.4500, 0.1505) -- (0.4520, 0.1527) -- (0.4540, 0.1549) -- (0.4560, 0.1574) -- (0.4580, 0.1598) -- (0.4600, 0.1622) -- (0.4620, 0.1647) -- (0.4640, 0.1671) -- (0.4660, 0.1698) -- (0.4680, 0.1722) -- (0.4700, 0.1748) -- (0.4720, 0.1775) -- (0.4740, 0.1804) -- (0.4760, 0.1830) -- (0.4780, 0.1859) -- (0.4800, 0.1885) -- (0.4820, 0.1913) -- (0.4840, 0.1944) -- (0.4860, 0.1972) -- (0.4880, 0.2003) -- (0.4900, 0.2034) -- (0.4920, 0.2064) -- (0.4940, 0.2094) -- (0.4960, 0.2127) -- (0.4980, 0.2159) -- (0.5000, 0.2192) -- (0.5020, 0.2226) -- (0.5040, 0.2260) -- (0.5060, 0.2295) -- (0.5080, 0.2328) -- (0.5100, 0.2365) -- (0.5120, 0.2401) -- (0.5140, 0.2436) -- (0.5160, 0.2474) -- (0.5180, 0.2512) -- (0.5200, 0.2551) -- (0.5220, 0.2591) -- (0.5240, 0.2630) -- (0.5260, 0.2671) -- (0.5280, 0.2712) -- (0.5300, 0.2755) -- (0.5320, 0.2797) -- (0.5340, 0.2840) -- (0.5360, 0.2886) -- (0.5380, 0.2929) -- (0.5400, 0.2975) -- (0.5420, 0.3022) -- (0.5440, 0.3071) -- (0.5460, 0.3120) -- (0.5480, 0.3168) -- (0.5500, 0.3217) -- (0.5520, 0.3269) -- (0.5540, 0.3322) -- (0.5560, 0.3376) -- (0.5580, 0.3430) -- (0.5600, 0.3485) -- (0.5620, 0.3542) -- (0.5640, 0.3600) -- (0.5660, 0.3659) -- (0.5680, 0.3719) -- (0.5700, 0.3779) -- (0.5720, 0.3842) -- (0.5740, 0.3908) -- (0.5760, 0.3972) -- (0.5780, 0.4038) -- (0.5800, 0.4107) -- (0.5820, 0.4178) -- (0.5840, 0.4249) -- (0.5860, 0.4322) -- (0.5880, 0.4397) -- (0.5900, 0.4474) -- (0.5920, 0.4553) -- (0.5940, 0.4632) -- (0.5960, 0.4715) -- (0.5980, 0.4799) -- (0.6000, 0.4887) -- (0.6020, 0.4976) -- (0.6040, 0.5067) -- (0.6060, 0.5162) -- (0.6080, 0.5259) -- (0.6100, 0.5358) -- (0.6120, 0.5460) -- (0.6140, 0.5566) -- (0.6160, 0.5674) -- (0.6180, 0.5786) -- (0.6200, 0.5901) -- (0.6220, 0.6021) -- (0.6240, 0.6144) -- (0.6260, 0.6270) -- (0.6280, 0.6402) -- (0.6300, 0.6538) -- (0.6320, 0.6680) -- (0.6340, 0.6827) -- (0.6360, 0.6979) -- (0.6380, 0.7137) -- (0.6400, 0.7302) -- (0.6420, 0.7474) -- (0.6440, 0.7655) -- (0.6460, 0.7842) -- (0.6480, 0.8040) -- (0.6500, 0.8246) -- (0.6520, 0.8464) -- (0.6540, 0.8694) -- (0.6560, 0.8936) -- (0.6580, 0.9193) -- (0.6600, 0.9466) -- (0.6620, 0.9758) -- (0.6640, 1.007) -- (0.6660, 1.040);
%  -- (0.6680, 1.077) -- (0.6700, 1.116) -- (0.6720, 1.159) -- (0.6740, 1.207) -- (0.6760, 1.260) -- (0.6780, 1.320) -- (0.6800, 1.389)
\end{tikzpicture}
\caption{The rate function $I\circ \theta^{-1}(x)$ of the large deviation principle satisfied by $\frac{X_N}{N}$ when $\beta=2$.\label{fig:rate_function}}
\end{center}
\end{figure}
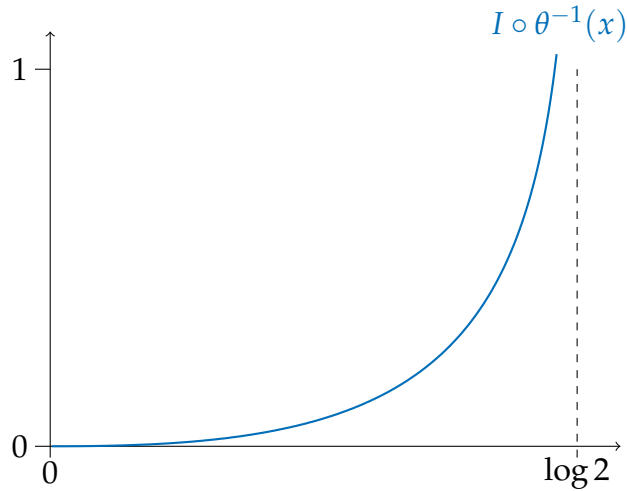

\noindent Let us make a few comments. First, the second part of this strong principle of large deviations only holds if $\alpha_0$ is very small: we are not able to prove that the sequence 
$$ N\, \proba_\beta[X_N \geq \alpha_0 N]\,\exp(\Lambda_{N,\beta}^*(\alpha_0N))$$ 
does not go to $0$ if $\alpha_0$ is very large (for instance, if $\alpha_0$ is close to $\log 2$). Then in the last part of the theorem we could give a more precise asymptotic expansion of the function $\Lambda_{N,\beta}^*(\alpha_0N)$, which would lead to a statement analoguous to the third item. This more precise expression is a bit complicated, and this is why we choose to only state the non-sharp principle of large deviations. This principle of large deviations can also be obtained by using the contraction principle on the large deviation principle which holds for the empirical measures of circular $\beta$ ensembles; see \cite[Theorem 5.5]{BNR09}. Note however that the computation of the rate function is much more difficult when using the contraction principle as one has to minimise under constraints a functional of probability measures. Last when $\beta=2$ the third item of Theorem \ref{thm:super_large} is an improvement of the Hughes--Keating--O'Connell large deviation principle (Equation \eqref{eq:log_large_deviations}): we have an asymptotic expansion of the probability instead of its logarithm, as well as an explicit formula for the rate function (if we consider $\theta^{-1}$ to be explicit).

\begin{remark}
For the sake of simplicity, we choose to state our results of strong moderate and large deviations for the circular $\beta$ ensembles. However, it will be clear from our arguments that the same techniques can be used in order to obtain the strong moderate and large deviations for the circular Jacobi $(\beta,\delta)$ ensembles, with any parameter $\delta>0$.
\end{remark}
\medskip

\noindent \textbf{Outline of the paper.} In Section \ref{sec:estimations}  we give sharp estimates of the Laplace transforms of the random variables $X_N$. These estimates will enable us to control the mean $a_{N,\beta}$ and the variance $v_{N,\beta}$ of these random variables after an exponential change of measure, with a tilting parameter $h_N$ which we shall allow to be very large. An important argument which will be used constantly is a comparison theorem (Theorem \ref{thm:comparison}) which relates the formul{\ae} for a general parameter $\beta$ to those for $\beta=2$. This connection might be of independent interest. In Section \ref{sec:CLT}  we use the notion of zone of control developed in \cite{FMN19} in order to compute the speed of the convergence of the central limit theorem satisfied by the tilted variables $X_{N,h_N}$. We then proceed in Section \ref{sec:sharp_polynomial} to the proofs of our moderate and large deviation principles: in addition to the previous arguments, we prove there some asymptotic expansions for the Legendre--Fenchel convex duals of the log-Laplace transforms of our random variables. This will enable us to make more explicit the asymptotic equivalents of the probabilities of deviations.
\bigskip

\section{Estimation of the mean and of the variance}\label{sec:estimations}
In this section we investigate the relations between the three following quantities:
\begin{itemize}
    \item a sequence of positive parameters $(h_N)_{N \in \N}$, which will be used in order to tilt the random variables $X_N$ under $\proba_\beta$:
    \begin{align*}
    \esper_\beta[\E^{zX_{N,h_N}}] &= \frac{\esper_\beta[\E^{(z+h_N)X_N}]}{\esper_\beta[\E^{h_NX_N}]}.
    \end{align*}
    As already seen in the case $\beta=2$ (Example \ref{ex:hua_pickrell}), the tilted random variable $X_{N,h_N}$ under $\proba_\beta$ has the law of the variable $X_N$ under the distribution $\proba_{(\beta,\delta_N)}$ with $\delta_N = \frac{h_N}{2}$ (circular Jacobi ensemble). Indeed,
    \begin{align*}
    \esper_\beta[\E^{zX_{N,h_N}}] &= \frac{1}{\esper_\beta[\E^{h_NX_N}]\,C_N(\beta)}\int_{\Tor^N}  \prod_{1\leq i<j\leq N}|\E^{\I\theta_i}-\E^{\I \theta_j}|^\beta\, \E^{(z+h_N)X_N} \DD{\theta_1}\cdots \!\DD{\theta_N} \\
    &=\frac{1}{\esper_\beta[\E^{h_NX_N}]\,C_N(\beta)}\int_{\Tor^N}  \prod_{1\leq i<j\leq N}|\E^{\I\theta_i}-\E^{\I \theta_j}|^\beta \prod_{i=1}^N |1-\E^{\I\theta_i}|^{h_N} \,\E^{zX_N} \DD{\theta_1}\cdots \!\DD{\theta_N} \\
    &= \frac{C_N(\beta,\delta_N)}{\esper_\beta[\E^{h_NX_N}]\,C_N(\beta)} \,\,\esper_{(\beta,\delta_N)}[\E^{zX_N}],
    \end{align*}
    and the multiplicative factor equals $1$, as can be seen by setting $z=0$.

    \item the sequence of parameters $(a_{N,\beta})_{N \in \N}$, which is related to $(h_N)_{N \in \N}$ by the equation
    \begin{align}
    a_{N,\beta} &= \esper_\beta[X_{N,h_N}] = \esper_{(\beta,\delta_N)}[X_N].\label{eq:critical_parameter}
\end{align}
In order to keep track of the setting in which the calculations are made, we add an index $\beta$; the notation $a_N$ without index will be used when $\beta=2$.

    \item the corresponding sequence of variances $(v_{N,\beta})_{N \in \N}$, defined by
    \begin{align*}
    v_{N,\beta}&= \var_\beta(X_{N,h_N}) = \var_{(\beta,\delta_N)}(X_N). 
    \end{align*}    
    Again, we shall use the notation $v_N$ without index when considering the random characteristic polynomials with $\beta=2$.
\end{itemize}

\noindent Our objective is to understand the order of magnitude of these quantities in various regimes. Let us remark that if $\Lambda_{N,\beta}(z)=\log \esper_\beta[\E^{zX_N}]$ is the log-Laplace transform of $X_N$ under $\proba_\beta$,
then 
$$\log \left(\esper_\beta[\E^{zX_{N,h_N}}] \right)= \log \left(\frac{\esper_\beta[\E^{(z+h_N)X_N}]}{\esper_\beta[\E^{h_NX_N}]}\right)  = \Lambda_{N,\beta}(z+h_N) - \Lambda_{N,\beta}(h_N),$$
so by taking the two first derivatives of this equation at $z=0$, we obtain $a_{N,\beta} = \Lambda_{N,\beta}'(h_N)$ and $v_{N,\beta}=\Lambda_{N,\beta}''(h_N)$. Therefore, the question above is related to the estimation of the log-Laplace transform and its derivatives, for values of the parameter $h_N$ in a large range.\medskip

In Subsection \ref{subsec:estimates_mv_beta2} we perform the analysis of the case $\beta=2$; the estimation of $a_N$ and $v_N$ then follows  from the asymptotic expansion of the polygamma functions. In Subsection \ref{subsec:comparison}  we prove a Comparison Theorem \ref{thm:comparison} which allows one to transfer the results for Haar-distributed random matrices to general circular $\beta$ ensembles. In Subsection \ref{subsec:balanced_sequences}, we use the estimates of the previous paragraphs in order to identify the different regimes of fluctuations of the variables $X_N$ (Propositions \ref{prop:moderate_regime} and \ref{prop:large_regime}).

\subsection{Estimates for Haar-distributed unitary matrices}\label{subsec:estimates_mv_beta2}
Until the end of this paragraph $\beta=2$, and we therefore remove the index $\beta$ from the quantities considered. Given a complex number $z$, we denote $\eps_z=\frac{z}{N}$. We start with the following estimate which will also be useful later in order to compute the Legendre--Fenchel transform of $\Lambda_N$.

\begin{proposition}[Asymptotics of the log-Laplace transform, case $\beta=2$]\label{prop:laplace_estimate_haar}
Suppose $\Re(z)>0$. Then
$$\Lambda_N(z) = \log \Psi(z) + \frac{z^2\,\log N}{4} - \frac{3z^2}{8} + \frac{N^2}{2}\,b(\eps_z) + O\!\left(\frac{1}{N}+\frac{|z|^2}{N^2}\right),$$
where 
$$b(\eps) = (1+\eps)^2\log(1+\eps) - 2\left(1+\frac{\eps}{2}\right)^2\,\log\!\left(1+\frac{\eps}{2}\right)=\frac{3\eps^2}{4}+\frac{\eps^3}{4}+O(\eps^4),$$
and $\Psi(z) = \frac{G(1+\frac{z}{2})^2}{G(1+z)}$. Moreover, 
$$\log \Psi(z) = -\frac{z^2\,\log(2z)}{4}+\frac{3z^2}{8}- \frac{1}{12}\,\log\!\left(\frac{z}{4}\right) + \zeta'(-1) + O\!\left(\frac{1}{|z|}\right).$$
\end{proposition}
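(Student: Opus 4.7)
\smallskip

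\noindent\textbf{Proof plan.} The starting point is the exact identity recalled just above, which follows from Equation \eqref{eq:laplace_transform} with $\beta=2,\,\delta=0$ together with the functional equation $G(z+1)=\Gamma(z)\,G(z)$:
\begin{equation*}
\esper_2[\E^{zX_N}] = \frac{G(1+N)\,G(1+N+z)}{G(1+N+\frac{z}{2})^2}\,\Psi(z),\qquad \Psi(z)=\frac{G(1+\frac{z}{2})^2}{G(1+z)}.
\end{equation*}
Taking logarithms, $\Lambda_N(z)=\log\Psi(z)+\log G(1+N)+\log G(1+N+z)-2\log G(1+N+\frac{z}{2})$, so the whole problem reduces to feeding the Stirling expansion \eqref{eq:stirling_barnes} into the three $G$-terms with arguments $w\in\{N,\,N+z,\,N+\frac{z}{2}\}$, and to collecting the pieces with the combinatorial multiplicities $(1,1,-2)$. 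Since $\Re(z)>0$, all three arguments have real part $\geq N$, so each remainder $O(1/|w|)$ contributes $O(1/N)$.

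I would group the terms coming out of \eqref{eq:stirling_barnes} by their $w$-degree. The $\zeta'(-1)$ contributions cancel because $1+1-2=0$, and similarly the linear terms $\frac{\log(2\pi)}{2}w$ cancel because $(N+z)+N-2(N+\frac{z}{2})=0$. The quadratic pieces $-\frac{3w^2}{4}$ combine via the identity $(N+z)^2+N^2-2(N+\frac{z}{2})^2=\frac{z^2}{2}$, producing the term $-\frac{3z^2}{8}$. The main terms $\frac{w^2}{2}\log w$ are treated by writing $N+z=N(1+\eps_z)$ and $N+\frac{z}{2}=N(1+\frac{\eps_z}{2})$ and splitting $\log w=\log N+\log(1+\cdot)$: the $\log N$ piece contributes $\frac{\log N}{2}\cdot\frac{z^2}{2}=\frac{z^2\log N}{4}$ by the same identity as above, while the $\log(1+\cdot)$ piece is exactly $\frac{N^2}{2}\,b(\eps_z)$ by the definition of $b$.

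The only remaining terms are the logarithmic ones $-\frac{\log w}{12}$, whose signed sum equals
\begin{equation*}
\tfrac{1}{12}\bigl[2\log(1+\tfrac{\eps_z}{2})-\log(1+\eps_z)\bigr].
\end{equation*}
This has to be absorbed in the remainder $O(N^{-1}+|z|^2N^{-2})$. I would split into cases: for $|\eps_z|\leq 1$ a Taylor expansion gives $\frac{\eps_z^2}{48}+O(|\eps_z|^3)=O(|z|^2/N^2)$; for $|\eps_z|>1$, the expression is bounded by a multiple of $\log|\eps_z|$, which is trivially $O(|\eps_z|^2)=O(|z|^2/N^2)$. Combining everything yields the first formula in the proposition.

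For the second formula, I would apply Stirling \eqref{eq:stirling_barnes} directly to $\log\Psi(z)=2\log G(1+\frac{z}{2})-\log G(1+z)$ as $|z|\to\infty$. The computation is structurally the same but without the parameter $N$: the $\frac{w^2}{2}\log w$ pieces combine into $-\frac{z^2}{4}\log(2z)$ (using $\log(z/2)=\log z-\log 2$), the $-\frac{3w^2}{4}$ pieces into $+\frac{3z^2}{8}$, the $w\,\frac{\log 2\pi}{2}$ pieces cancel, the $-\frac{\log w}{12}$ pieces combine into $-\frac{1}{12}\log(z/4)$, the $\zeta'(-1)$ contributions add up to $\zeta'(-1)$ (since $2-1=1$), and the Stirling remainders are $O(1/|z|)$. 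The main obstacle in the whole argument is not a conceptual one but rather the bookkeeping of the various remainders: one has to check that the logarithmic term $\frac{1}{12}[2\log(1+\frac{\eps_z}{2})-\log(1+\eps_z)]$ fits inside $O(N^{-1}+|z|^2N^{-2})$ uniformly across the regimes $|z|\ll N$, $|z|\asymp N$ and $|z|\gg N$, which is what the case-split above achieves.
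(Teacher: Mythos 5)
Your proposal is correct and follows exactly the route the paper takes: it combines the exact Barnes $G$-function identity for $\Lambda_N(z)$ with the Stirling expansion \eqref{eq:stirling_barnes} of $G(1+w)$, applied at $w\in\{N,\,N+z,\,N+\frac{z}{2}\}$ (and then at $w\in\{\frac{z}{2},\,z\}$ for $\log\Psi$). The paper's proof is one sentence and leaves all the bookkeeping implicit; what you have written is simply that bookkeeping made explicit, including the correct observation that $\Re(z)>0$ forces $|N+z|,|N+\tfrac{z}{2}|\geq N$ so the Stirling remainders are $O(1/N)$, and that the leftover logarithmic term $\tfrac{1}{12}\bigl[2\log(1+\tfrac{\eps_z}{2})-\log(1+\eps_z)\bigr]=\log\bigl(1+\tfrac{\eps_z^2/4}{1+\eps_z}\bigr)/12$ is uniformly $O(|\eps_z|^2)=O(|z|^2/N^2)$ across all regimes of $|z|/N$.
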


\begin{proof}
We combine the exact formula
\begin{align*}
\Lambda_N(z) &= \log\!\left(\frac{G(1+N)\,G(1+N+z)\,G(1+\frac{z}{2})^2}{G(1+N+\frac{z}{2})^2\,G(1+z)}\right) \\ 
&= \log \Psi(z) + \log\!\left(\frac{G(1+N)\,G(1+N+z)}{G(1+N+\frac{z}{2})^2}\right)
\end{align*}
with the Stirling-like estimate \eqref{eq:stirling_barnes}.
\end{proof}
\medskip

We now examine the three first derivatives of $\Lambda_N$; the two first derivatives will provide asymptotic expansions of $a_N$ and $v_N$, whereas the third derivative will be used in Section \ref{sec:CLT} in the computation of the speed of convergence of certain central limit theorems. We shall use one of the reflection formula satisfied by the Barnes $G$-function:
$$\frac{d}{\!\DD{z}} \log G(1+z) = \frac{\log(2\pi)+1}{2} + z\,\psi_0(z) - z,$$
with $\psi_0(z) = \frac{d}{\!\DD{z}} (\log \Gamma(z))$ (the digamma function); see \cite[Equation (A.13)]{Vor87}. More generally we set $\psi_m(z)=\frac{d^{m+1}}{\!\DD{z}^{m+1}} (\log \Gamma(z))$. The asymptotic expansions of the first polygamma functions are:
\begin{align*}
\psi_0(z) &= \log z - \frac{1}{2z} -\frac{1}{12z^2} + O\!\left(\frac{1}{|z|^4}\right);\\
\psi_1(z) &= \frac{1}{z} + \frac{1}{2z^2}+ \frac{1}{6z^3}+O\!\left(\frac{1}{|z|^5}\right);\\
\psi_2(z) &= -\frac{1}{z^2} - \frac{1}{z^3} -\frac{1}{2z^4} + O\!\left(\frac{1}{|z|^6}\right).
\end{align*}
Indeed, the classical Stirling asymptotic expansion of $\log(\Gamma(1+z))$ can be obtained by using the Laplace method, therefore it can be differentiated term by term; see \cite[Chapter 19, Lemma 4 and Examples 7 and 13]{Zor02}.
As a consequence, if $\Re(z)>0$, then:
\begin{align*}
\Lambda_N'(z) &= (N+z)\,\psi_0(N+z) - \left(N+\frac{z}{2}\right) \psi_0\!\left(N+\frac{z}{2}\right)+\frac{z}{2}\,\psi_0\!\left(\frac{z}{2}\right) - z\,\psi_0(z)\\
&=\frac{z}{2} \log \!\left(\frac{N}{2z}\right) + N\big((1+\eps_z)\,\log(1+\eps_z) - (1+\eps_\frac{z}{2})\,\log (1+\eps_\frac{z}{2})\big) \\
&\quad+\frac{1}{12}\left(\frac{1}{N+\frac{z}{2}}-\frac{1}{z}-\frac{1}{N+z}\right)+O\!\left(\frac{1}{|z|^3}\right);\\
\Lambda_N''(z) &= \psi_0(N+z) - \frac{1}{2}\,\psi_0\!\left(N+\frac{z}{2}\right) + \frac{1}{2}\,\psi_0\!\left(\frac{z}{2}\right) - \psi_0(z) \\
&\quad+ (N+z)\,\psi_1(N+z)- \frac{1}{2}\left(N+\frac{z}{2}\right) \psi_1\!\left(N+\frac{z}{2}\right)+\frac{z}{4}\,\psi_1\!\left(\frac{z}{2}\right) - z\,\psi_1(z)\\
&= \frac{1}{2}\log\!\left(\frac{N}{2z}\right) + \left(\log(1+\eps_z) - \frac{1}{2}\log(1+\eps_{\frac{z}{2}})\right)\\
&\quad+\frac{1}{12}\left(\frac{1}{(N+z)^2}-\frac{1}{2(N+\frac{z}{2})^2} + \frac{1}{z^2}\right)+O\!\left(\frac{1}{|z|^4}\right);\\
\Lambda_N'''(z)&= 2\,\psi_1(N+z) - \frac{1}{2}\,\psi_1\!\left(N+\frac{z}{2}\right)+\frac{1}{2}\,\psi_1\!\left(\frac{z}{2}\right)-2\,\psi_1(z)\\
&\quad+ (N+z)\,\psi_2(N+z)- \frac{1}{4}\left(N+\frac{z}{2}\right) \psi_2\!\left(N+\frac{z}{2}\right)+\frac{z}{8}\,\psi_2\!\left(\frac{z}{2}\right) - z\,\psi_2(z)\\
&= -\frac{N^2}{z(N+z)(2N+z)}+O\!\left(\frac{1}{|z|^3}\right).
\end{align*}
So by taking the two first identities with $z=h_N=2\delta_N$ we obtain:

\begin{proposition}[Estimates of the mean and the variance, case $\beta=2$]\label{prop:mean_variance_beta2}
Under the Hua--Pickrell measures $\proba_{(2,\delta_N)}$, uniformly for $\delta_N>0$, we have:
\begin{align}
\esper_{(2,\delta_N)}[X_N] &= \delta_N\,\log\!\left(\frac{N}{4\delta_N}\right) + N\left(\left(1+2\eps_{\delta_N}\right)\log\!\left(1+2\eps_{\delta_N}\right) - \left(1+\eps_{\delta_N}\right)\log\left(1+\eps_{\delta_N}\right)\right)\notag\\
&\quad+\frac{1}{12}\left(\frac{1}{N+\delta_N} - \frac{1}{2\delta_N}-\frac{1}{N+2\delta_N}\right)+O\!\left(\frac{1}{(\delta_N)^3}\right);\label{eq:mean_huapickrell}\\
\var_{(2,\delta_N)}(X_N) &= \frac{1}{2}\log\!\left(\frac{N}{4\delta_N}\right) + \left(\log\left(1+2\eps_{\delta_N}\right) - \frac{1}{2}\,\log\left(1+\eps_{\delta_N}\right)\right)\notag\\
&\quad+\frac{1}{12}\left( \frac{1}{(N+2\delta_N)^2} - \frac{1}{2(N+\delta_N)^2}+ \frac{1}{4(\delta_N)^2}\right)+O\!\left(\frac{1}{(\delta_N)^4}\right).\label{eq:variance_huapickrell}
\end{align}
\end{proposition}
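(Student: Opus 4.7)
The plan is to realise the two estimates as direct specialisations of the asymptotic expansions of $\Lambda_N'(z)$ and $\Lambda_N''(z)$ already obtained above, evaluated at $z=h_N=2\delta_N$.

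First I would recall the identification of the tilted law with the Hua--Pickrell measure established at the beginning of Section \ref{sec:estimations}: for $\beta=2$, the random variable $X_{N,h_N}$ under $\proba_2$ has the same distribution as $X_N$ under $\proba_{(2,\delta_N)}$, with $\delta_N = h_N/2$. Since $\log \esper_2[\E^{zX_{N,h_N}}] = \Lambda_N(z+h_N) - \Lambda_N(h_N)$, differentiating once and twice at $z=0$ gives
\begin{align*}
\esper_{(2,\delta_N)}[X_N] &= \esper_2[X_{N,h_N}] = \Lambda_N'(h_N) = \Lambda_N'(2\delta_N), \\
\var_{(2,\delta_N)}(X_N) &= \var_2(X_{N,h_N}) = \Lambda_N''(h_N) = \Lambda_N''(2\delta_N).
\end{align*}
So everything reduces to substituting $z=2\delta_N$ into the previously established expansions.

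Next I would carry out the substitution. With $z = 2\delta_N$ we have $\tfrac{z}{2}\log(\tfrac{N}{2z}) = \delta_N\log(\tfrac{N}{4\delta_N})$ and $\tfrac{1}{2}\log(\tfrac{N}{2z}) = \tfrac{1}{2}\log(\tfrac{N}{4\delta_N})$, while $\eps_z = z/N = 2\eps_{\delta_N}$ and $\eps_{z/2} = \eps_{\delta_N}$, so the polynomial-in-$\log$ terms
$$N\bigl((1+\eps_z)\log(1+\eps_z) - (1+\eps_{z/2})\log(1+\eps_{z/2})\bigr) \quad \text{and} \quad \log(1+\eps_z) - \tfrac{1}{2}\log(1+\eps_{z/2})$$
become exactly the corresponding terms in \eqref{eq:mean_huapickrell} and \eqref{eq:variance_huapickrell}. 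The rational parts $\frac{1}{12}(\frac{1}{N+z/2} - \frac{1}{z} - \frac{1}{N+z})$ and $\frac{1}{12}(\frac{1}{(N+z)^2} - \frac{1}{2(N+z/2)^2} + \frac{1}{z^2})$ become, after substitution,
$$\frac{1}{12}\!\left(\frac{1}{N+\delta_N}-\frac{1}{2\delta_N}-\frac{1}{N+2\delta_N}\right) \quad \text{and} \quad \frac{1}{12}\!\left(\frac{1}{(N+2\delta_N)^2}-\frac{1}{2(N+\delta_N)^2}+\frac{1}{4(\delta_N)^2}\right),$$
which match exactly. The remainder terms $O(|z|^{-3})$ and $O(|z|^{-4})$ become $O((\delta_N)^{-3})$ and $O((\delta_N)^{-4})$ as stated.

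The only non-routine point is the assertion of uniformity in $\delta_N > 0$. The Stirling expansions of $\psi_m$ are uniform in $|z| \geq c$ for any fixed $c > 0$, and the combinations appearing in $\Lambda_N'$ and $\Lambda_N''$ involve arguments $N+z$, $N+z/2$, $z/2$, $z$; for $z=2\delta_N$ with $\delta_N$ large these are all large. For $\delta_N$ in any fixed bounded range $(0,M]$, both sides of the equalities are bounded quantities (upon absorbing the singular terms into the error), so the stated bounds hold up to enlarging the implicit constants. The only potential obstacle is therefore to track that the implicit constants in the polygamma remainders are truly uniform in the arguments on the right half-plane; this follows from the integral form of the Stirling series and the classical estimates of \cite[Chapter 19]{Zor02} referenced above, and requires no new input.
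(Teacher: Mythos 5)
Your proof matches the paper's, which is nothing more than the substitution $z=h_N=2\delta_N$ into the expansions of $\Lambda_N'$ and $\Lambda_N''$ displayed just before the proposition, together with the identification $\esper_{(2,\delta_N)}[X_N]=\Lambda_N'(h_N)$ and $\var_{(2,\delta_N)}(X_N)=\Lambda_N''(h_N)$ established at the start of Section~\ref{sec:estimations}. Your remark on uniformity usefully fills a gap the paper leaves silent, although the claim that ``both sides of the equalities are bounded quantities'' for $\delta_N\in(0,M]$ is inaccurate (the mean grows like $\delta_N\log N$); the correct point is that the discrepancy created by the Stirling expansion at small argument is only of order $1/\delta_N$, which the error term $O(1/\delta_N^3)$ dominates once $\delta_N$ is bounded.
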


\noindent These asymptotic expansions of $a_N$ and $v_N$ will be used many times in the sequel. Note that if $h_N$ satisfies $1 \lesssim h_N \ll N$, then the terms of the expansion \eqref{eq:mean_huapickrell} are ordered by decreasing magnitude. In this setting the second line of Equation \eqref{eq:mean_huapickrell} is a $O((h_N)^{-1})$, and the second line of Equation \eqref{eq:variance_huapickrell} is a $O((h_N)^{-2})$.

\subsection{Comparison of unitary ensembles}\label{subsec:comparison}
Let us now consider the case of a general parameter $\beta>0$. In the following we denote
$$ \lambda_N = \frac{h_N}{\beta}=\frac{\delta_N}{\beta'}.$$
It turns out that the mean $a_{N,\beta}$ and the variance $v_{N,\beta}$ admit nice expressions if we introduce this new parameter $\lambda_N$. Let us remark that Proposition \ref{prop:mean_variance_beta2} expresses $a_N=a_{N,2}$ and $v_N=v_{N,2}$ as functions of $\lambda_N$: it suffices to replace $\delta_N$ by $\lambda_N$ in the formul{\ae} of the proposition. In the sequel, if $\beta \neq 2$, each time we write $a_N$ or $v_N$ \emph{without the index $\beta$}, we mean the aforementioned functions of the parameter $\lambda_N$, and we shall see that they are closely related to the parameters $a_{N,\beta}$ and $v_{N,\beta}$, which are also functions of $\lambda_N$. \medskip

The exact formula for the logarithm of the Laplace transform of $X_N$ under $\proba_{\beta}$ is
$$\Lambda_{N,\beta}(z) = \log \!\left(\esper_{\beta}[\E^{zX_N}]\right) = \sum_{k=0}^{N-1}  \left(\ell(\beta'k+1)+\ell(\beta'k+1+z) - 2\,\ell\!\left(\beta'k+1+\frac{z}{2}\right)\right),$$
where $\ell(z)=\log\Gamma(z)$. By the Binet formula (see \emph{e.g.} \cite{Sas99}), for $\Re(z)>0$,
$$\ell(z+1) = \left(z+\frac{1}{2}\right)\log z - z +\frac{1}{2}\log(2\pi)+\int_0^{\infty} \E^{-sz}\varphi(s)\DD{s}$$
with $\varphi(s)=\frac{1}{s}(\frac{1}{2}-\frac{1}{s}+\frac{1}{\E^s-1})$. Therefore $\Lambda_{N,\beta}(z)=m(z)+g_{N,\beta}(z)+k_{N,\beta}(z)$, with
\begin{align*}
m(z) &= \ell(1+z) - 2\ell\!\left(1+\frac{z}{2}\right);\\
g_{N,\beta}(z) &=\int_0^{\infty} \frac{1-\E^{-s\beta'(N-1)}}{1-\E^{-s\beta'}}\,\left(1-\E^{-\frac{sz}{2}}\right)^2\,\E^{-s\beta'}\,\varphi(s)\DD{s};\\
k_{N,\beta}(z)&=\sum_{k=1}^{N-1} \left(\kappa(\beta'k) + \kappa (\beta'k+z)-2\, \kappa\!\left(\beta'k+\frac{z}{2}\right)\right)
\end{align*}
with $\kappa(y) = (y+\frac{1}{2})\log y$ on the last line.
We set $\phi_\beta(s)=\varphi(s)-\beta'^2\,\varphi(s\beta')$, $g_N(z)=g_{N,2}(z)$ and $k_N(z)=k_{N,2}(z)$. Our main tool will be the following identity:

\begin{theorem}[Comparison between Haar ensembles and circular $\beta$ ensembles]\label{thm:comparison}
For any $\beta>0$, we have:
\begin{align*}
\Lambda_{N,\beta}(z) &= \beta'\,\Lambda_N\!\left(\frac{z}{\beta'}\right) \\
&\quad+ \frac{\beta'-1}{2}\left(2\ell\!\left(N+\frac{z}{2\beta'}\right) - \ell(N) - \ell\!\left(N+\frac{z}{\beta'}\right) \right) + m(z) - \frac{\beta'+1}{2}\, m\!\left(\frac{z}{\beta'}\right) \\
&\quad+ \int_0^{\infty} \frac{1-\E^{-s\beta'(N-1)}}{1-\E^{-s\beta'}}\,\left(1-\E^{-\frac{sz}{2}}\right)^2\,\E^{-s\beta'}\,\phi_\beta(s)\,\DD{s}.
\end{align*}
\end{theorem}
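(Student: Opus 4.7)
The plan is to work directly from the Binet-formula decomposition $\Lambda_{N,\beta}(z) = m(z) + g_{N,\beta}(z) + k_{N,\beta}(z)$ that is already derived in the excerpt, and to compute the differences $g_{N,\beta}(z) - \beta'\,g_N(z/\beta')$ and $k_{N,\beta}(z) - \beta'\,k_N(z/\beta')$ separately. The theorem will then follow by reorganising the resulting pieces.

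For the integral piece, the natural move is to perform the change of variables $s \mapsto s\beta'$ inside $g_N(z/\beta')$. Since $g_N$ already contains $\varphi(s)$, this substitution transforms $\varphi(s)\DD{s}$ into $\beta'\,\varphi(s\beta')\DD{s}$, and the prefactor $\frac{1-\E^{-s\beta'(N-1)}}{1-\E^{-s\beta'}}\,(1-\E^{-sz/2})^2\,\E^{-s\beta'}$ is identically the one appearing in $g_{N,\beta}(z)$. Thus
\[
g_{N,\beta}(z) - \beta'\,g_N(z/\beta') = \int_0^\infty \frac{1-\E^{-s\beta'(N-1)}}{1-\E^{-s\beta'}}\,(1-\E^{-sz/2})^2\,\E^{-s\beta'}\,(\varphi(s) - \beta'^2\varphi(s\beta'))\DD{s},
\]
which is exactly the integral term in the theorem by the definition $\phi_\beta(s)=\varphi(s)-\beta'^2\varphi(s\beta')$.

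For the polynomial piece, I would use $\kappa(y) = (y+\tfrac{1}{2})\log y$ and compute
\[
\kappa(\beta' y) - \beta'\kappa(y) = \beta' y\,\log\beta' + \tfrac{1}{2}\log\beta' - \tfrac{\beta'-1}{2}\log y.
\]
Applying this to each of the three terms $\kappa(\beta' k),\,\kappa(\beta' k+z),\,\kappa(\beta' k+z/2)$ with $w = z/\beta'$, the linear-in-$y$ contribution $\beta' y \log \beta'$ cancels because $k + (k+w) - 2(k+w/2) = 0$, and the constant $\tfrac{1}{2}\log\beta'$ cancels by $1+1-2=0$. Only the logarithmic terms survive, yielding
\[
k_{N,\beta}(z) - \beta'\,k_N(z/\beta') = \frac{\beta'-1}{2}\sum_{k=1}^{N-1}\bigl(2\log(k+w/2) - \log k - \log(k+w)\bigr).
\]
Using $\sum_{k=1}^{N-1}\log(k+a) = \ell(N+a) - \ell(1+a)$ and $\ell(1)=0$, this sum collapses to $2\ell(N+z/(2\beta')) - \ell(N) - \ell(N+z/\beta') + m(z/\beta')$, so that
\[
k_{N,\beta}(z) - \beta'\,k_N(z/\beta') = \frac{\beta'-1}{2}\Bigl(2\ell\bigl(N+\tfrac{z}{2\beta'}\bigr) - \ell(N) - \ell\bigl(N+\tfrac{z}{\beta'}\bigr)\Bigr) + \frac{\beta'-1}{2}\,m(z/\beta').
\]

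The final step is to add the three identities $\Lambda_{N,\beta}(z) = m(z)+g_{N,\beta}(z)+k_{N,\beta}(z)$ and $\beta'\Lambda_N(z/\beta') = \beta'(m(z/\beta') + g_N(z/\beta') + k_N(z/\beta'))$, subtract them, and simplify: the coefficient of $m(z/\beta')$ coming from $-\beta'$ and $+\tfrac{\beta'-1}{2}$ combines to $-\tfrac{\beta'+1}{2}$, which matches the stated formula. There is no real obstacle here, the exercise being essentially a careful bookkeeping of Binet's integral representation; the only point requiring attention is to ensure that the polynomial (non-integral) parts of $\kappa$ and of the Binet formula cancel properly in both the sum over $k$ and in the $k=0$ contribution, which is why the ``free'' term $m(z)$ appears with coefficient $1$ on the right-hand side rather than being absorbed into $\beta' m(z/\beta')$.
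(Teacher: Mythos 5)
Your proposal is correct and follows essentially the same route as the paper's own proof: decompose $\Lambda_{N,\beta}=m+g_{N,\beta}+k_{N,\beta}$, evaluate $g_{N,\beta}(z)-\beta' g_N(z/\beta')$ via the substitution $s\mapsto s\beta'$ to produce the $\phi_\beta$ integral, evaluate $k_{N,\beta}(z)-\beta' k_N(z/\beta')$ via the $\kappa(\beta' y)-\beta'\kappa(y)$ identity and telescoping to $\ell$-terms, and then collect coefficients of $m(z/\beta')$. The only difference is that you spell out the algebra behind the paper's ``straightforward calculation'' for the $\kappa$-sum; the final remark about the $k=0$ contribution is slightly off as phrased (the coefficient $1$ on $m(z)$ simply reflects its presence in the original decomposition of $\Lambda_{N,\beta}$, not a cancellation), but this does not affect the argument.
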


\begin{proof}
A straightforward calculation yields:
\begin{align*}
k_{N,\beta}(z) - \beta'\,k_N\!\left(\frac{z}{\beta'}\right) &= \frac{\beta'-1}{2}\,\sum_{k=1}^{N-1} \log \!\left(\frac{\left(k+\frac{z}{2\beta'}\right)^2}{k(k+\frac{z}{\beta'})}\right) \\
&= \frac{\beta'-1}{2}\,\left(2\ell\!\left(N+\frac{z}{2\beta'}\right) - \ell(N) - \ell\!\left(N+\frac{z}{\beta'}\right) + m\!\left(\frac{z}{\beta'}\right)\right).
\end{align*}
If we add $m(z)-\beta'\,m(\frac{z}{\beta'})+g_{N,\beta}(z)-\beta'\,g_N(\frac{z}{\beta'})$ to this identity, then we obtain the formula of the proposition since a change of variables gives:
\begin{align*}
\beta'\,g_N\!\left(\frac{z}{\beta'}\right)&=\beta'\,\int_0^{\infty} \frac{1-\E^{-s(N-1)}}{1-\E^{-s}}\,\left(1-\E^{-\frac{sz}{2\beta'}}\right)^2\,\E^{-s}\,\varphi(s)\DD{s}\\
&= (\beta')^2\,\int_0^{\infty} \frac{1-\E^{-t\beta'(N-1)}}{1-\E^{-t\beta'}}\,\left(1-\E^{-\frac{tz}{2}}\right)^2\,\E^{-t\beta'}\,\varphi(t\beta')\DD{t}.\qedhere
\end{align*}
\end{proof}
\medskip

In the comparison theorem above, all the terms depend smoothly on $z$, and the only quantity that is somewhat difficult to analyse is the integral. 

\begin{lemma}\label{lem:laplace_method}
 Suppose $\beta \neq 2$. We set 
$$G_{N,\beta}(z) = \frac{12\beta'}{1-\beta'^2}\int_0^\infty \frac{1-\E^{-s\beta'(N-1)}}{1-\E^{-s\beta'}}\,\left(1-\E^{-\frac{sz}{2}}\right)^2\,\E^{-s\beta'}\phi_\beta(s)\DD{s}.$$
 Then
 \begin{align*}
(G_{N,\beta})'(h_N)&=\frac{1}{\beta'}\left(\frac{1}{2\lambda_N} + \frac{1}{2\lambda_N+N}-\frac{1}{\lambda_N+N}\right) + O_\beta\!\left(\frac{1}{(\lambda_N)^2}\right);\\
 (G_{N,\beta})''(h_N)&= \frac{1}{\beta'^2}\left(-\frac{1}{4(\lambda_N)^2} - \frac{1}{(2\lambda_N+N)^2} + \frac{1}{2(\lambda_N + N)^2}\right)+ O_\beta\!\left(\frac{1}{(\lambda_N)^3}\right);\\
 (G_{N,\beta})'''(h_N+\I\xi)&=O_\beta\!\left(\frac{1}{(\lambda_N)^3}\right).
 \end{align*}
\end{lemma}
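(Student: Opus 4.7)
The plan is to rewrite the integrand so that $\eta_\beta$ surfaces explicitly. From the series $\tfrac{1}{\E^s-1} = \tfrac{1}{s} - \tfrac{1}{2} + \tfrac{s}{12} + O(s^3)$ one reads off $\varphi(s) = \tfrac{1}{12} + O(s^2)$ near the origin, hence $\phi_\beta(0) = \tfrac{1-\beta'^2}{12}$. Injecting this value into the definition of $\eta_\beta$ and multiplying by $\E^{-s\beta'}$ produces the identity
\begin{equation*}
\E^{-s\beta'}\phi_\beta(s) \,=\, \frac{(1-\E^{-s\beta'})(1-\beta'^2)\,\eta_\beta(s)}{12\,s\beta'}.
\end{equation*}
Combined with the telescoping $\tfrac{1-\E^{-s\beta'(N-1)}}{1-\E^{-s\beta'}}\cdot(1-\E^{-s\beta'}) = 1-\E^{-s\beta'(N-1)}$, the normalising prefactor $\tfrac{12\beta'}{1-\beta'^2}$ cancels and $G_{N,\beta}$ collapses to the much cleaner formula
\begin{equation*}
G_{N,\beta}(z) \,=\, \int_0^\infty \bigl(1-\E^{-s\beta'(N-1)}\bigr)\,\frac{\eta_\beta(s)}{s}\,\bigl(1-\E^{-sz/2}\bigr)^2\DD{s}.
\end{equation*}

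Next I would differentiate under the integral (justified by the $O(s^2)$ vanishing at zero and the decay of $\eta_\beta$). Evaluating at $z = h_N = 2\beta'\lambda_N$ gives
\begin{align*}
(G_{N,\beta})'(h_N) &= \int_0^\infty (1-\E^{-s\beta'(N-1)})\,\eta_\beta(s)\,(\E^{-s\beta'\lambda_N}-\E^{-2s\beta'\lambda_N})\DD{s},\\
(G_{N,\beta})''(h_N) &= \int_0^\infty (1-\E^{-s\beta'(N-1)})\,\eta_\beta(s)\,s\bigl(\E^{-2s\beta'\lambda_N}-\tfrac{1}{2}\E^{-s\beta'\lambda_N}\bigr)\DD{s}.
\end{align*}
I would then split $\eta_\beta(s) = 1 + (\eta_\beta(s)-1)$. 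Replacing $\eta_\beta$ by the constant $1$ reduces each integral to a linear combination of $\int_0^\infty \E^{-su}\DD{s} = u^{-1}$ and $\int_0^\infty s\,\E^{-su}\DD{s}=u^{-2}$, which reproduce exactly the fractions claimed in the lemma, modulo the substitution of $N-1$ for $N$ in the denominators. Since $\tfrac{1}{\lambda_N+N-1}-\tfrac{1}{\lambda_N+N} = O((\lambda_N)^{-2})$ and the analogous estimate holds for the squared denominators, this discrepancy falls into the stated error terms $O_\beta((\lambda_N)^{-2})$ and $O_\beta((\lambda_N)^{-3})$. The remainder weighted by $\eta_\beta(s)-1$ is then controlled via the uniform bound $|\eta_\beta(s)-1| \leq C_\beta\,s$ valid for all $s\geq 0$ (smoothness of $\eta_\beta$ together with $\eta_\beta(0)=1$ give the bound near zero, while $\eta_\beta \leq 1$ at infinity is absorbed into $C_\beta\,s$ for $s \geq 1$); this produces error integrals of the shape $\int_0^\infty s^k\,\E^{-s\beta'\lambda_N}\DD{s}$ with $k=2$ for $(G_{N,\beta})'$ and $k=3$ for $(G_{N,\beta})''$, of the claimed orders.

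For the third derivative at $z = h_N+\I\xi$, one further differentiation under the integral yields an integrand whose modulus is independent of $\xi$, since $|\E^{-sz/2}| = \E^{-s\beta'\lambda_N}$ and $|\E^{-sz}| = \E^{-2s\beta'\lambda_N}$. Using $\eta_\beta(s) \leq 1$ (from the fact that $\eta_\beta$ is decreasing starting at $\eta_\beta(0)=1$) reduces the bound to $\int_0^\infty s^2\,\E^{-s\beta'\lambda_N}\DD{s} = O((\lambda_N)^{-3})$, uniformly in $\xi \in \R$. The genuine difficulty in the argument lies entirely in the first step: recognising that the awkward combination of $\E^{-s\beta'}\phi_\beta(s)$ with the telescoping factor $\tfrac{1-\E^{-s\beta'(N-1)}}{1-\E^{-s\beta'}}$ is engineered precisely so that $\eta_\beta$ emerges and every ambient ratio cancels. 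Once this rewriting is in place, what remains is a routine Watson-type asymptotic analysis of exponential integrals.
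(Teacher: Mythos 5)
Your proof follows the same route as the paper: the key step in both is the simplification of $G_{N,\beta}$ via the identity $\frac{12\beta'}{1-\beta'^2}\cdot\frac{\E^{-s\beta'}\phi_\beta(s)}{1-\E^{-s\beta'}}=\frac{\eta_\beta(s)}{s}$, followed by differentiation under the integral sign and a Watson's-lemma asymptotic expansion (the paper invokes the Laplace method for $\int_0^\infty s^j\,\eta_\beta(s)\,\E^{-\lambda s}\DD{s}$, which is exactly your split $\eta_\beta = 1 + (\eta_\beta - 1)$ with $|\eta_\beta(s)-1|\lesssim s$). One small slip: your error integrals should have $k=1$ for $(G_{N,\beta})'$ and $k=2$ for $(G_{N,\beta})''$ rather than $k=2$ and $k=3$, since $\int_0^\infty s^k\,\E^{-\lambda s}\DD{s}=k!\,\lambda^{-(k+1)}$, but the stated conclusions $O_\beta((\lambda_N)^{-2})$ and $O_\beta((\lambda_N)^{-3})$ are unaffected.
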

 
\begin{proof}
Note that $\varphi(0)=\frac{1}{12}$; 
% and $\varphi'(0)=0$
 therefore $\phi_\beta(0)=\frac{1-\beta'^2}{12}$ 
% and $\phi_\beta'(0)=0$, 
and if $\eta_\beta(s) = \frac{s\beta'\,\phi_\beta(s)}{(\E^{s\beta'}-1)\,\phi_\beta(0)}$, then $\eta_\beta(0)=1$. 
% and $\eta_\beta'(0)= - \frac{\beta'}{2}$

\noindent We have 
\begin{align*}
G_{N,\beta}(z)&=\int_0^\infty \eta_\beta(s)\,(1-\E^{-s\beta'(N-1)})\left(1-\E^{-\frac{sz}{2}}\right)^2\frac{\DD{s}}{s}\\
(G_{N,\beta})'(z) &= \int_0^\infty \eta_\beta(s)\,\left(1-\E^{-s\beta'(N-1)}\right)\left(\E^{-\frac{sz}{2}}-\E^{-sz}\right)\DD{s}.
\end{align*}
By the Laplace method (see \emph{e.g.} \cite[Section 19.2, p. 619]{Zor02}), when $\lambda$ is a positive real parameter,
\begin{align*}
\int_0^\infty \eta_\beta(s)\,\E^{-\lambda s}\DD{s} 
&= \frac{\eta_\beta(0)}{\lambda}+ O\!\left(\frac{1}{\lambda^2}\right);\\
\int_0^\infty s\,\eta_\beta(s)\,\E^{-\lambda s}\DD{s} 
&= \frac{\eta_\beta(0)}{\lambda^2} +O\!\left(\frac{1}{\lambda^3}\right) ;\\
\int_0^\infty s^2\,\eta_\beta(s)\,\E^{-\lambda s}\DD{s}
&= \frac{2\eta_\beta(0)}{\lambda^3} + O\!\left(\frac{1}{\lambda^4}\right).
\end{align*}
If we replace the remainders $O(\lambda^{-k})$ by $O((\Re(\lambda))^{-k})$, then these estimates still hold for $\lambda$ complex number with a positive real part. As a consequence of the first formula above, if we expand in the integral $(G_{N,\beta})'(z)$ the product $(1-\E^{-s\beta'(N-1)})(\E^{-\frac{sz}{2}}-\E^{-sz})$, then we obtain for $z=h_N$:
\begin{align*}
(G_{N,\beta})'(h_N) &=  \frac{\eta_\beta(0)}{h_N} +\frac{\eta_\beta(0)}{\beta'(N-1)+h_N}-\frac{\eta_\beta(0)}{\beta'(N-1)+\frac{h_N}{2}}+O_\beta\!\left(\frac{1}{(h_N)^2}\right) \\
&= \frac{1}{h_N} + \frac{1}{\beta'N+h_N}-\frac{1}{\beta'N+\frac{h_N}{2}} +O_\beta\!\left(\frac{1}{(h_N)^2}\right).
\end{align*}
Similarly we have
\begin{align*}
(G_{N,\beta})''(z) &= \int_0^\infty s\,\eta_\beta(s)\,\left(1-\E^{-s\beta'(N-1)}\right)\left(\E^{-sz}-\frac{1}{2}\,\E^{-\frac{sz}{2}}\right)\DD{s} ;\\
(G_{N,\beta})''(h_N) &= -\frac{1}{(h_N)^2} - \frac{1}{(\beta'N + h_N)^2} + \frac{1}{2(\beta'N + \frac{h_N}{2})^2}+ O_\beta\!\left(\frac{1}{(h_N)^3}\right).
\end{align*}
Finally,
\begin{align*}
(G_{N,\beta})'''(z) &= \int_0^\infty s^2\,\eta_\beta(s)\,\left(1-\E^{-s\beta'(N-1)}\right)\left(\frac{1}{4}\,\E^{-\frac{sz}{2}}-\E^{-sz}\right)\DD{s}; \\
(G_{N,\beta})'''(h_N+\I \xi) &= \frac{2}{(h_N+\I\xi)^3} + \frac{2}{(h_N+\I\xi+\beta'N)^3} - \frac{1}{2(\frac{h_N+\I\xi}{2}+\beta'N)^3} + O_\beta\!\left(\frac{1}{(h_N)^4}\right)\\
&=O_\beta\!\left(\frac{1}{(h_N)^3}\right)\!.
\end{align*}
Replacing $h_N$ by $2\beta'\lambda_N$ yields the results announced.
\end{proof}
\medskip

We can now state the analogues of Propositions \ref{prop:laplace_estimate_haar} and \ref{prop:mean_variance_beta2} for a general parameter $\beta>0$.

\begin{proposition}[Asymptotics of the log-Laplace transform, case $\beta\neq 2$]\label{prop:laplace_estimate_beta}
Suppose $\Re(z)>0$. Then,
\begin{align*}
\Lambda_{N,\beta}(z) &= \log \Psi_\beta(z) + \frac{z^2\,\log N}{4\beta'} - \frac{3z^2}{8\beta'} + \frac{N^2}{2}\,\beta'\, b(\eps_{\frac{z}{\beta'}}) + \frac{N}{2}\, (\beta'-1) \,c(\eps_{\frac{z}{\beta'}}) + O_\beta\!\left(\frac{1}{N}+\frac{|z|^2}{N^2}\right)
\end{align*}
where 
\begin{align*}
\log \Psi_\beta(z)&=\beta'\log \Psi\!\left(\frac{z}{\beta'}\right) + m(z) - \frac{\beta'+1}{2}\, m\!\left(\frac{z}{\beta'}\right)  
+ \frac{1-\beta'^2}{12\beta'}\int_0^{\infty} \frac{\left(1-\E^{-\frac{sz}{2}}\right)^2}{s}\,\eta_\beta(s)\DD{s};\\
c(\eps) &= 2\left(1+\frac{\eps}{2}\right) \log\!\left(1+\frac{\eps}{2}\right) -(1+\eps)\log(1+\eps)=-\frac{\eps^2}{4}+\frac{\eps^3}{8}+O(\eps^4).
\end{align*}
Moreover, 
\begin{align*}
\log \Psi_\beta(z)&=-\frac{z^2\,\log(\frac{2z}{\beta'})}{4\beta'}+\frac{3z^2}{8\beta'} + \frac{((\beta'-1)\log 2)\,z}{2\beta'} + \frac{2\beta'-3}{12}\,\log\!\left(\frac{z}{4\beta'}\right) + \beta'\zeta'(-1)\\
& \quad + \frac{\beta'-1}{4}\log (2\pi) - \frac{1}{2}\log \beta'+ \frac{1-\beta'^2}{12\beta'} \int_0^{\infty} \frac{\left(1-\E^{-\frac{sz}{2}}\right)^2}{s}\,\eta_\beta(s)\DD{s}+ O_\beta\!\left(\frac{1}{|z|}\right).
\end{align*}
In this second formula, when $z=h_N$ is a large positive real number, the integral of the second line is equal to $\log h_N + A_\beta+ O_\beta(\frac{1}{h_N})$ for some explicit constant $A_\beta$.
\end{proposition}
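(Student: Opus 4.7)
The proof combines the Comparison Theorem \ref{thm:comparison} with Proposition \ref{prop:laplace_estimate_haar}, Stirling-type expansions of $\log \Gamma$, and Laplace-method estimates for integrals against $\eta_\beta$. The first task is to establish the defining identity for $\log \Psi_\beta(z)$; the second is its large-$z$ asymptotic expansion together with the evaluation of the residual integral.

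For the first task, I would start from Theorem \ref{thm:comparison}, which writes $\Lambda_{N,\beta}(z)$ as $\beta'\Lambda_N(z/\beta')$ plus the log-Gamma difference $\frac{\beta'-1}{2}[2\ell(N+\frac{z}{2\beta'}) - \ell(N) - \ell(N+\frac{z}{\beta'})]$, the analytic factor $m(z) - \frac{\beta'+1}{2}m(z/\beta')$, and an integral with kernel $\phi_\beta$. Proposition \ref{prop:laplace_estimate_haar} applied at argument $z/\beta'$ directly furnishes the contributions $\beta'\log \Psi(z/\beta')$, $\frac{z^2 \log N}{4\beta'}$, $-\frac{3z^2}{8\beta'}$ and $\frac{\beta' N^2}{2} b(\eps_{z/\beta'})$, modulo $O_\beta(1/N + |z|^2/N^2)$. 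The standard Stirling expansion $\ell(N+v) = (N+v-\frac{1}{2})\log(N+v) - (N+v) + \frac{1}{2}\log(2\pi) + O(1/N)$ transforms the log-Gamma difference into $Nc(\eps_{z/\beta'}) + O_\beta(1/N + |z|^2/N^2)$: the $\log N$ coefficients cancel, and the Taylor expansion of $\frac{1}{2}\log(1+\eps) - \log(1+\eps/2)$ is $O(\eps^2)$. For the remaining integral, the identity $\phi_\beta(s) = \frac{(1-\beta'^2)(\E^{s\beta'}-1)\eta_\beta(s)}{12s\beta'}$ (using $\phi_\beta(0) = \frac{1-\beta'^2}{12}$) reshapes it as $\frac{1-\beta'^2}{12\beta'}\int_0^\infty \frac{(1-\E^{-s\beta'(N-1)})(1-\E^{-sz/2})^2}{s}\eta_\beta(s)\DD{s}$. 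For $\Re(z)\geq 0$, the bound $|1-\E^{-sz/2}|^2 \leq s^2|z|^2/4$ and the Laplace-method estimate $\int_0^\infty s\,\E^{-s\beta'(N-1)}\eta_\beta(s)\DD{s} = O_\beta(1/N^2)$ allow $1-\E^{-s\beta'(N-1)}$ to be replaced by $1$ at a cost $O_\beta(|z|^2/N^2)$. Collecting all pieces yields the first identity for $\log \Psi_\beta(z)$.

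For the large-$z$ asymptotic of $\log \Psi_\beta(z)$, I would substitute into the first formula the explicit expansion of $\log \Psi$ from Proposition \ref{prop:laplace_estimate_haar} at $w=z/\beta'$, together with the Stirling asymptotic $m(z) = -\frac{1}{2}\log z + (z+1)\log 2 - \frac{1}{2}\log(2\pi) + O(1/|z|)$ applied to both $m(z)$ and $m(z/\beta')$. Careful bookkeeping then identifies the coefficients of $z^2\log z$, $z$, $\log z$, $\log 2$, $\log \beta'$, $\log(2\pi)$, and $\zeta'(-1)$ on the right-hand side of the second formula; for example the $\log z$ coefficient is $-\frac{\beta'}{12} + \frac{\beta'-1}{4} = \frac{2\beta'-3}{12}$, and the remaining constants match after grouping the $\log(4\beta')$ contributions.

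The delicate step is the large-$h$ asymptotic of $J(h):=\int_0^\infty \frac{(1-\E^{-sh/2})^2}{s}\eta_\beta(s)\DD{s}$. Differentiating under the integral sign yields $J'(t) = H_\beta(t/2) - H_\beta(t)$, where $H_\beta(t) = \int_0^\infty \E^{-st}\eta_\beta(s)\DD{s}$ is the Laplace transform of $\eta_\beta$. Integration by parts in $s$, with boundary terms controlled by $\eta_\beta(0)=1$ and $\eta_\beta(\infty)=0$, gives $H_\beta(t/2)-H_\beta(t) = \frac{1}{t} + \int_0^\infty \frac{2\E^{-st/2}-\E^{-st}}{t}\eta_\beta'(s)\DD{s}$. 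Integrating $J'$ from $1$ to $h$ produces $J(h) = J(1) + \log h + \int_1^h \int_0^\infty \frac{2\E^{-st/2}-\E^{-st}}{t}\eta_\beta'(s)\DD{s}\DD{t}$; extending the outer range to $[1,\infty)$ introduces a tail whose inner integral is $O_\beta(1/t)$ by another Laplace-method estimate, so that the tail contributes $O_\beta(1/h)$. One reads off $J(h) = \log h + A_\beta + O_\beta(1/h)$ with $A_\beta$ exactly in the form stated in the introduction. The principal obstacle of the whole proof is this last step: pinning down the logarithmic term together with the integration-by-parts representation of $A_\beta$, and making the tail estimate uniform in $h$; the earlier steps are essentially coefficient bookkeeping once the main asymptotics are in place.
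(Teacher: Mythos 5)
Your proposal follows essentially the same route as the paper's proof: you combine the Comparison Theorem with Proposition~\ref{prop:laplace_estimate_haar} evaluated at $z/\beta'$, use Stirling's expansion of $\ell$ to convert the log-Gamma difference into $\tfrac{N(\beta'-1)}{2}c(\eps_{z/\beta'})$ (with the $\log N$, linear and constant terms cancelling and the residual $\tfrac12\log(1+\eps)-\log(1+\eps/2)=O(\eps^2)$ absorbed into the error), rewrite the $\phi_\beta$-integral via the identity $\phi_\beta(s)=\tfrac{1-\beta'^2}{12}\tfrac{(\E^{s\beta'}-1)\eta_\beta(s)}{s\beta'}$ and discard the $\E^{-s\beta'(N-1)}$ factor at cost $O_\beta(|z|^2/N^2)$ by the pointwise bound $|1-\E^{-sz/2}|\leq s|z|/2$, and finally obtain the large-$z$ expansion of $\log\Psi_\beta$ by coefficient bookkeeping. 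The only cosmetic departure is in establishing $F_\beta'(z)-\tfrac1z=\tfrac1z\int_0^\infty(2\E^{-sz/2}-\E^{-sz})\eta_\beta'(s)\,ds$: the paper first subtracts $\eta_\beta(0)$ inside the integrand and then integrates by parts (so the boundary term vanishes because $\eta_\beta(0)-1=0$), whereas you integrate by parts on the Laplace transforms $H_\beta(t/2)$ and $H_\beta(t)$ directly and extract $\tfrac1t$ from the boundary terms at $s=0$; these are algebraically equivalent. Your conclusion about the tail contributing $O_\beta(1/h)$ and the resulting form of $A_\beta$ is correct and matches the paper.
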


\begin{proof}
By combining the first part of Proposition \ref{prop:laplace_estimate_haar}, the Comparison Theorem \ref{thm:comparison} and the Stirling approximation of $\ell(z)$ for $z$ large, we obtain:
\begin{align*}
\Lambda_{N,\beta}(z) &= \text{right-hand side of the first formula} -\int_0^{\infty} \E^{-s\beta'N}\,\frac{\left(1-\E^{-\frac{sz}{2}}\right)^2}{1-\E^{-s\beta'}}\,\phi_\beta(s)\DD{s},
\end{align*}
so it suffices to check that the integral is a $O_\beta(\frac{|z|^2}{N^2})$. However by the Taylor integral formula,
$$\left|\frac{1-\E^{-\frac{sz}{2}}}{\frac{sz}{2}}\right|=\left|\int_0^1 \E^{-\frac{usz}{2}}\DD{u}\right| \leq 1,$$
so with the same notations as in proof of Lemma \ref{lem:laplace_method}
$$\left|\int_0^{\infty} \E^{-s\beta'N}\,\frac{\left(1-\E^{-\frac{sz}{2}}\right)^2}{1-\E^{-s\beta'}}\,\phi_\beta(s)\DD{s}\right| \lesssim |z|^2 \int_0^\infty \E^{-s\beta'(N-1)}\,s\,\eta_\beta(s)\DD{s} = O_\beta\!\left(\frac{|z|^2}{N^2}\right).$$
We now combine the second part of Proposition \ref{prop:laplace_estimate_haar} and the Stirling approximation
$$m(z) = (z+1) \log 2 -\frac{1}{2} \log (2\pi z) + O\!\left(\frac{1}{|z|}\right)$$
in order to compute an approximation of $\log \Psi_\beta(z)$; we obtain the second formula of the proposition. Let us finally estimate the integral $F_\beta(z)=\int_0^\infty (1-\E^{-\frac{sz}{2}})^2\,\frac{\eta_\beta(s)}{s}\DD{s}$. We remark that
\begin{align*}
F_\beta'(z)&=\int_0^{\infty} \left(\E^{-\frac{sz}{2}}-\E^{-sz}\right)\,\eta_\beta(s)\DD{s} ;\\
F_\beta'(z) - \frac{1}{z} &=\int_0^{\infty} \left(\E^{-\frac{sz}{2}}-\E^{-sz}\right)\,(\eta_\beta(s)-\eta_\beta(0))\DD{s}\\
&= \frac{1}{z}\int_0^\infty\left(2\E^{-\frac{sz}{2}}-\E^{-sz}\right)\eta_\beta'(s)\DD{s} = O_\beta\!\left(\frac{1}{(\Re(z))^2}\right).
\end{align*}
Therefore, for $z=h_N$ large positive real number,
\begin{align*}
F_\beta(h_N) &= \log h_N + F_\beta(1) + \int_{t=1}^{h_N} \int_{s=0}^\infty\frac{2\E^{-\frac{st}{2}}-\E^{-st}}{t}\, \eta_\beta'(s)\DD{s}\DD{t} \\
&=\log h_N + F_\beta(1) + \int_{t=1}^\infty \int_{s=0}^\infty \frac{2\E^{-\frac{st}{2}}-\E^{-st}}{t}\, \eta_\beta'(s)\DD{s} \DD{t} + O_\beta\!\left(\frac{1}{h_N}\right).
\end{align*}
This proves the last part of the proposition, with 
\begin{equation*}
    A_\beta=\int_{s=0}^\infty(1-\E^{-\frac{s}{2}})^2\,\frac{\eta_\beta(s)}{s}\DD{s} + \int_{t=1}^\infty \int_{s=0}^\infty \frac{2\E^{-\frac{st}{2}}-\E^{-st}}{t}\, \eta_\beta'(s)\DD{s}\DD{t}. \qedhere
\end{equation*}
\end{proof}

\begin{proposition}[Estimates of the mean and the variance, case $\beta \neq 2$]\label{prop:mean_variance_jacobi} Under the circular Jacobi $(\beta,\delta_N)$ distributions, uniformly for $\delta_N>0$, we have:
 \begin{align*}
 \esper_{(\beta,\delta_N)}[X_N] &=a_{N,\beta} = a_N + \frac{\beta'-1}{2\beta'} \left(\log 2 + \log(1+\eps_{\lambda_N}) - \log(1+2\eps_{\lambda_N})\right) \\
 &\quad+ \frac{(1-\beta')(1-2\beta')}{12(\beta')^2}\left(\frac{1}{2\lambda_N}+\frac{1}{N+2\lambda_N}-\frac{1}{N+\lambda_N }\right) + O_\beta\!\left(\frac{1}{(\lambda_N)^2}\right)
 \end{align*}
 and
 \begin{align*}
 \var_{(\beta,\delta_N)}(X_N) &= v_{N,\beta} = \frac{1}{\beta'}\,v_N +\frac{\beta'-1}{2(\beta')^2}\left(\frac{1}{2N+2\lambda_N}  -\frac{1}{N+2\lambda_N} \right) \\
 &+\frac{(1-\beta')(1-2\beta')}{12(\beta')^3} \left(\frac{1}{2(N+\lambda_N)^2} - \frac{1}{(N+2\lambda_N)^2} -\frac{1}{4(\lambda_N)^2}\right) + O_\beta\!\left(\frac{1}{(\lambda_N)^3}\right),
 \end{align*}
 where $\lambda_N = \frac{\delta_N}{\beta'}$, and where $a_{N}$ and $v_N$ correspond to the case $\beta=2$ with the same parameter $\lambda_N$ and are estimated by Equations \eqref{eq:mean_huapickrell} and \eqref{eq:variance_huapickrell} (with $\lambda_N=\delta_N$ in these equations).
\end{proposition}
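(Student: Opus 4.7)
The plan is to differentiate the Comparison Theorem~\ref{thm:comparison} in $z$, once and then twice, and specialise to $z=h_N=2\beta'\lambda_N$, exploiting $a_{N,\beta}=\Lambda_{N,\beta}'(h_N)$ and $v_{N,\beta}=\Lambda_{N,\beta}''(h_N)$. Condensing Theorem~\ref{thm:comparison} as
$$\Lambda_{N,\beta}(z)=\beta'\,\Lambda_N(z/\beta')+B_{N,\beta}(z)+M_{N,\beta}(z)+\tfrac{1-\beta'^2}{12\beta'}\,G_{N,\beta}(z),$$
where $B_{N,\beta}(z)=\tfrac{\beta'-1}{2}\bigl(2\ell(N+z/(2\beta'))-\ell(N)-\ell(N+z/\beta')\bigr)$ and $M_{N,\beta}(z)=m(z)-\tfrac{\beta'+1}{2}\,m(z/\beta')$, differentiation of the first summand at $z=h_N$ produces, by the convention recalled in the paper, exactly the quantities $a_N=\Lambda_N'(2\lambda_N)$ and $v_N/\beta'=\Lambda_N''(2\lambda_N)/\beta'$ of Proposition~\ref{prop:mean_variance_beta2}.

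For the remaining three pieces, I would rely on the Stirling expansions $\psi_0(y)=\log y-\tfrac{1}{2y}-\tfrac{1}{12y^2}+O(y^{-4})$ and $\psi_1(y)=\tfrac{1}{y}+\tfrac{1}{2y^2}+\tfrac{1}{6y^3}+O(y^{-5})$, valid as $\Re(y)\to+\infty$. Inserting them into $B_{N,\beta}'(h_N)$ and $B_{N,\beta}''(h_N)$ produces the $\log(1+\eps_{\lambda_N})-\log(1+2\eps_{\lambda_N})$ contribution together with rational terms in $1/(N+\lambda_N)$ and $1/(N+2\lambda_N)$, with respective prefactors $\tfrac{\beta'-1}{2\beta'}$ and $\tfrac{\beta'-1}{2\beta'^2}$. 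Inserting them into $M_{N,\beta}'(h_N)=\psi_0(1+h_N)-\psi_0(1+h_N/2)-\tfrac{\beta'+1}{2\beta'}\bigl(\psi_0(1+2\lambda_N)-\psi_0(1+\lambda_N)\bigr)$ and into $M_{N,\beta}''(h_N)$ yields the $\log 2$ term, whose coefficient is $1-\tfrac{\beta'+1}{2\beta'}=\tfrac{\beta'-1}{2\beta'}$ thanks to the asymptotic $\psi_0(1+y)=\log y+\tfrac{1}{2y}-\tfrac{1}{12y^2}+O(y^{-4})$, and residual $1/\lambda_N$ and $1/\lambda_N^2$ contributions of order $\tfrac{\beta'-1}{8\beta'\lambda_N}$ and $\tfrac{1-\beta'}{16\beta'^2\lambda_N^2}$ respectively. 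Finally Lemma~\ref{lem:laplace_method} furnishes the leading asymptotics of $\tfrac{1-\beta'^2}{12\beta'}(G_{N,\beta})'(h_N)$ and $\tfrac{1-\beta'^2}{12\beta'}(G_{N,\beta})''(h_N)$, in terms of $\tfrac{1}{2\lambda_N}$, $\tfrac{1}{N+\lambda_N}$, $\tfrac{1}{N+2\lambda_N}$, and their squared counterparts for the variance.

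The main remaining task is algebraic regrouping: one has to check that the three coefficients from $B_{N,\beta}$, $M_{N,\beta}$ and $\tfrac{1-\beta'^2}{12\beta'}G_{N,\beta}$ multiplying each of the terms $\tfrac{1}{2\lambda_N}$, $\tfrac{1}{N+\lambda_N}$, $\tfrac{1}{N+2\lambda_N}$ (and the squared analogues for the variance) sum up to the announced prefactor. The essential identity is
$$\frac{\beta'-1}{4\beta'}+\frac{1-\beta'^2}{12\beta'^2}=\frac{3\beta'(\beta'-1)+(1-\beta'^2)}{12\beta'^2}=\frac{(2\beta'-1)(\beta'-1)}{12\beta'^2}=\frac{(1-\beta')(1-2\beta')}{12\beta'^2},$$
together with its analogue divided by $\beta'$ for the variance. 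Once this matching has been verified for each of the three rational terms, the remainders $O_\beta(1/\lambda_N^2)$ for $a_{N,\beta}$ and $O_\beta(1/\lambda_N^3)$ for $v_{N,\beta}$ follow from the next-order Stirling corrections in $\psi_0,\psi_1$ combined with the error bounds already proved in Lemma~\ref{lem:laplace_method}; uniformity in $\delta_N>0$ is inherited from the uniformity of each input. The expected obstacle is purely combinatorial: faithfully enumerating all the terms of order $1/\lambda_N$ and $1/N$ produced by the many Stirling expansions without dropping or double-counting any contribution, and recognising that the resulting coefficient simplifies into the compact factor $(1-\beta')(1-2\beta')/(12\beta'^k)$ of the statement.
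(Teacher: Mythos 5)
Your proposal is correct and follows exactly the same route as the paper: differentiate the Comparison Theorem~\ref{thm:comparison} once and twice at $z=h_N$, feed in the polygamma Stirling expansions and Lemma~\ref{lem:laplace_method}, and regroup coefficients using the identity $\frac{\beta'-1}{4\beta'}+\frac{1-\beta'^2}{12\beta'^2}=\frac{(1-\beta')(1-2\beta')}{12\beta'^2}$. The only difference is that you carry out the algebraic bookkeeping that the paper leaves as an unwritten ``follows immediately,'' which is a reasonable thing to do.
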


\begin{remark}
We insist on the fact that the proposition above relates $a_{N,\beta}=\esper_\beta[X_{N,\beta\lambda_N}]$ and $v_{N,\beta}=\var_\beta(X_{N,\beta\lambda_N})$ to $a_N = \esper_2[X_{N,2\lambda_N}]$ and $v_N = \var_2(X_{N,2\lambda_N})$; the tilting parameter $h_N$ is not the same for the circular $\beta$ ensemble and for the Haar ensemble, but the scaled tilting parameter $\lambda_N$ is the same.
\end{remark}

\begin{proof}
Since $a_{N,\beta} = \Lambda_{N,\beta}'(h_N)$ and $v_{N,\beta} = \Lambda_{N,\beta}''(h_N)$, the Comparison Theorem \ref{thm:comparison} yields the exact formul{\ae}
\begin{align*}
a_{N,\beta} &= a_N + \psi_0(h_N)-\psi_0\!\left(\frac{h_N}{2}\right)+\frac{\beta'-1}{2h_N}+\frac{\beta'+1}{2\beta'}\left(\psi_0\!\left(\frac{h_N}{2\beta'}\right)-\psi_0\!\left(\frac{h_N}{\beta'}\right)\right)\\
&\quad+\frac{\beta'-1}{2\beta'}\left(\psi_0\!\left(N+\frac{h_N}{2\beta'}\right)-\psi_0\!\left(N+\frac{h_N}{\beta'}\right)\right)+\frac{1-\beta'^2}{12\beta'}\,(G_{N,\beta})'(h_N);\\
v_{N,\beta}&= \frac{1}{\beta'}\,v_N+ \psi_1(h_N) - \frac{1}{2}\,\psi_1\!\left(\frac{h_N}{2}\right) -\frac{\beta'-1}{2(h_N)^2}+\frac{\beta'+1}{2\beta'^2}\left(\frac{1}{2}\,\psi_1\!\left(\frac{h_N}{2\beta'}\right)-\psi_1\!\left(\frac{h_N}{\beta'}\right)\right)\\
&\quad+\frac{\beta'-1}{2\beta'^2}\left(\frac{1}{2}\,\psi_1\!\left(N+\frac{h_N}{2\beta'}\right)-\psi_1\!\left(N+\frac{h_N}{\beta'}\right)\right)+\frac{1-\beta'^2}{12\beta'}\,(G_{N,\beta})''(h_N).
\end{align*}
The result follows immediately by using Lemma \ref{lem:laplace_method}, the asymptotic expansions of the poly\-gamma functions and the relation $h_N=2\beta'\lambda_N$.
\end{proof}

\subsection{Balanced sequences of parameters and the regimes of fluctuations}\label{subsec:balanced_sequences}
In the previous paragraphs, we have computed $a_{N,\beta}$ and $v_{N,\beta}$ in terms of the tilting parameter $h_N$ (or, of the rescaled tilting parameter $\lambda_N$). Conversely, given a sequence of positive parameters $(a_{N,\beta})_{N \in \N}$, we can recover the corresponding sequence $(h_N)_{N \in \N}$ if $0< a_{N,\beta} < N \log 2$ for any $N$: indeed, the function $h_N \mapsto \Lambda_{N,\beta}'(h_N)=\esper_\beta[X_{N,h_N}]$ is an increasing bijection from $\R_+$ to $[0,N\log2 )$, as
$$N \log 2 = \max \{\Re \log \det(I_N-U_N),\,\,U_N \in \mathrm{U}(N)\}.$$
Let us now compare the growths of the two sequences $(\lambda_N)_{N \in \N}$ and $(a_{N,\beta})_{N\in \N}$.

\begin{lemma}\label{lem:regime_sequences}
Fix $\beta>0$. We have the following equivalences:
\begin{enumerate}
    \item $\log N \lesssim a_{N,\beta}$ if and only if\, $1 \lesssim \lambda_N$.
    \item $\log N \ll a_{N,\beta}$ if and only if\, $1 \ll \lambda_N$.
    \item $\limsup_{N \to \infty} \frac{a_{N,\beta}}{N\,\log 2}<1$ if and only if\, $\limsup_{N \to \infty} \frac{\lambda_N}{N}<+\infty$.
\end{enumerate}
\end{lemma}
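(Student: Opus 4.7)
The overall strategy is to leverage the explicit asymptotic formula
$$a_{N,\beta} = N\,\theta_{N,\beta}(\lambda_N/N) + O_\beta\!\left(\tfrac{1}{\lambda_N}\right)$$
which one reads off from Proposition \ref{prop:mean_variance_beta2} (in the case $\beta=2$) combined with Proposition \ref{prop:mean_variance_jacobi} (in the case $\beta\neq 2$), plus the definition of the modification $\theta_{N,\beta}$ in terms of $\theta$. This formula is valid uniformly for $\lambda_N$ bounded away from $0$, and it is the bridge between the growth regimes of $\lambda_N$ and those of $a_{N,\beta}$. Before using it, I record the elementary fact, already observed in the text just before the lemma, that $\lambda_N\mapsto a_{N,\beta}$ is a smooth increasing bijection from $\R_+$ onto $[0,N\log 2)$: indeed $\Lambda_{N,\beta}$ is strictly convex (its second derivative is the positive quantity $v_{N,\beta}$), so $a_{N,\beta}=\Lambda'_{N,\beta}(\beta\lambda_N)$ is strictly increasing in $\lambda_N$, with limits $0$ and $N\log 2$. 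Consequently, to prove each of the three equivalences it is enough to handle one direction and invoke subsequential arguments for the other.

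The key ingredient for small tilts is the exact formula
$$a_{N,\beta}=\sum_{k=0}^{N-1}\bigl[\psi_0(\beta'k+1+h_N)-\psi_0(\beta'k+1+h_N/2)\bigr]=\sum_{k=0}^{N-1}\int_{h_N/2}^{h_N}\psi_1(\beta'k+1+t)\DD{t}.$$
Since $\psi_1$ is positive and decreasing on $\R_+^*$, and $\psi_1(x)=O(1/x)$ at infinity, for any $h_N$ bounded above by a fixed constant the integrand is controlled by $\psi_1(\beta'k+1)\lesssim_\beta 1/(k+1)$. Summing yields the crucial bound
$$a_{N,\beta}\leq \tfrac{h_N}{2}\sum_{k=0}^{N-1}\psi_1(\beta'k+1)=O_\beta(\lambda_N\log N)\qquad\text{whenever }\lambda_N\lesssim 1.$$
In particular $\lambda_N\to 0$ forces $a_{N,\beta}=o(\log N)$. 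For the opposite direction, when $\lambda_N\geq c>0$ one uses the displayed main-term formula together with the asymptotic $\theta(x)\sim x\,|\log x|$ as $x\to 0$ to obtain $N\,\theta_{N,\beta}(\lambda_N/N)\gtrsim \lambda_N\log(N/\lambda_N)\gtrsim \log N$, and the $O_\beta(1/\lambda_N)$ correction is harmless.

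With these two estimates in hand, the three equivalences are proved by contraposition. For part (1): if $\lambda_N\not\gtrsim 1$, a subsequence satisfies $\lambda_{N_k}\to 0$, hence $a_{N_k,\beta}/\log N_k\to 0$ by the Taylor bound, contradicting $\log N\lesssim a_{N,\beta}$; conversely, if $\lambda_N\gtrsim 1$, the lower bound just described gives $a_{N,\beta}\gtrsim\log N$. For part (2): if $\lambda_N\to\infty$ we split into $\lambda_N\lesssim N$ (where $\theta(\lambda_N/N)\sim (\lambda_N/N)\log(N/\lambda_N)$ gives $a_{N,\beta}\sim\lambda_N\log(N/\lambda_N)\gg\log N$) and $\lambda_N\gtrsim N$ (where $a_{N,\beta}$ is already of order $N$); conversely, if $\lambda_N\not\to\infty$, a bounded subsequence of $\lambda_{N_k}$ gives $a_{N_k,\beta}=O(\log N_k)$. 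For part (3): since $\theta$ is a bijection from $\R_+$ onto $[0,\log 2)$ and $\theta_{N,\beta}\to\theta$ uniformly, the condition $\limsup\lambda_N/N<\infty$ forces $\theta_{N,\beta}(\lambda_N/N)$ to stay bounded away from $\log 2$, hence $\limsup a_{N,\beta}/(N\log 2)<1$, and conversely $\lambda_N/N\to\infty$ along a subsequence implies $\theta(\lambda_N/N)\to\log 2$ and $a_{N,\beta}/(N\log 2)\to 1$.

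The only real obstacle is checking that the main-term expansion $a_{N,\beta}=N\theta_{N,\beta}(\lambda_N/N)+O_\beta(1/\lambda_N)$ is valid uniformly across the whole range $\lambda_N\in[1,\infty)$, including the delicate transitional regime $\lambda_N\asymp N$. For this one must trace through the remainder terms in Propositions \ref{prop:mean_variance_beta2}--\ref{prop:mean_variance_jacobi} (and the Binet-type integral of Lemma \ref{lem:laplace_method}) and verify that the claimed $O_\beta$ errors are uniform in $N$; this is essentially a careful bookkeeping argument using the uniform Laplace-method bounds already established. Everything else reduces to the two elementary asymptotics $\theta(x)\sim x|\log x|$ at $0$ and $\theta(x)\to\log 2$ at $\infty$.
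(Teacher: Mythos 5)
Your argument is correct and follows the same overall skeleton as the paper's: expand $a_{N,\beta}$ as $N\,\theta(\lambda_N/N)$ plus a controllable remainder, exploit the fact that $\lambda_N\mapsto a_{N,\beta}$ is increasing, and finish with subsequence arguments to turn one-sided implications into equivalences. The one genuine difference is how you handle the regime $\lambda_N\to 0$. The paper's asymptotic expansion of $a_{N,\beta}$ (Proposition \ref{prop:mean_variance_beta2} with Proposition \ref{prop:mean_variance_jacobi}) carries an $O_\beta(1/\lambda_N)$-type remainder and is therefore only useful when $\lambda_N$ is bounded away from $0$; the paper gets around this by establishing $a_{N,\beta}=\lambda_N\log N+O_{\beta,c,C}(1)$ on compact boxes $\lambda_N\in[c,C]$ and then \emph{removing one side of the constraint by monotonicity} to reach the $\liminf$/$\limsup$ implications. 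You instead use the exact representation $a_{N,\beta}=\sum_{k=0}^{N-1}\int_{h_N/2}^{h_N}\psi_1(\beta'k+1+t)\DD{t}$ and the monotone decrease of $\psi_1$ to get the unconditional upper bound $a_{N,\beta}\lesssim_\beta \lambda_N\log N$, from which $\lambda_N\to 0\Rightarrow a_{N,\beta}=o(\log N)$ falls out directly. This is a more self-contained way of closing that direction and avoids the slightly subtle restriction-removal step. Two small caveats: your line $N\theta_{N,\beta}(\lambda_N/N)\gtrsim\lambda_N\log(N/\lambda_N)$ uses the small-$x$ asymptotic of $\theta$ and is only literally valid for $\lambda_N\ll N$ (the case $\lambda_N\asymp N$ or larger is handled because $\theta$ is then bounded below, which you do invoke, but the wording conflates the two regimes in part (1)); and the "careful bookkeeping" you flag for uniformity in $\lambda_N\asymp N$ is in fact already asserted by the paper's Propositions \ref{prop:mean_variance_beta2} and \ref{prop:mean_variance_jacobi}, which are stated uniformly in $\delta_N>0$, so this is available off the shelf rather than needing to be re-derived.
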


\begin{proof}
By Equation \eqref{eq:mean_huapickrell} and Proposition \ref{prop:mean_variance_jacobi}, we have
\begin{align*}
a_{N,\beta} &= a_N + O_\beta\!\left(1+\frac{1}{\lambda_N}+\log\!\left(1+\frac{2\lambda_N}{N}\right)\right) \\
&= N\,\theta\!\left(\frac{\lambda_N}{N}\right) + O_\beta\!\left(1+\frac{1}{\lambda_N}+\log\!\left(1+\frac{2\lambda_N}{N}\right)\right)
\end{align*}
where $\theta(x) = (1+2x)\log(1+2x)-(1+x)\log(1+x)-x\log(4x)$. 
Suppose that $\lambda_N \in [c,C]$ with $c>0$. Then this interval being fixed, the $O_\beta(\cdot)$ above is a $O_{\beta,c,C}(1)$ and on the other hand we have the following asymptotic expansion of $\theta$ in a neighborhood of $0$:
$\theta(x) = -x\log x+O(x)$.
Therefore $a_{N,\beta} = \lambda_N \log N +O_{\beta,c,C}(1)$, so:
\begin{align*}
\left(\forall N,\,\,c \leq \lambda_N \leq C\right) &\Rightarrow \left(c \leq \liminf_{N \to +\infty}\frac{a_{N,\beta}}{\log N} \right);\\
\left(\forall N,\,\,c \leq \lambda_N \leq C\right) &\Rightarrow \left(\limsup_{N \to +\infty}\frac{a_{N,\beta}}{\log N} \leq C \right).
\end{align*}
 Now as $a_{N,\beta}$ and $\lambda_N$ are simultaneously increasing, we can remove the restriction $\lambda_N \leq C$ in the first implication and the restriction $c \leq \lambda_N$ in the second implication. We therefore obtain
\begin{align}
\left(c \leq \liminf_{N \to \infty} \lambda_N \right) &\Rightarrow \left(c \leq \liminf_{N \to \infty} \frac{a_{N,\beta}}{\log N} \right);\label{eq:implication_liminf}\\
\left(\limsup_{N \to \infty} \lambda_N \leq C \right) &\Rightarrow \left( \limsup_{N \to \infty} \frac{a_{N,\beta}}{\log N} \leq C\right).\label{eq:implication_limsup}
\end{align}
Equation \eqref{eq:implication_limsup} can be used to prove that the implication \eqref{eq:implication_liminf} is in fact an equivalence for any $c>0$. Indeed suppose that $\liminf_{N \to \infty}  \frac{a_{N,\beta}}{\log N} \geq c$. Then for any $\eps>0$ we have at least $\liminf_{N \to \infty} \lambda_N \geq (1-\eps)c $. Otherwise we could extract a subsequence $(\lambda_{N_i})_{i \in \N}$ with $\lambda_{N_i}<(1-\eps)c$ for all indices $i$, and by Equation \eqref{eq:implication_limsup} applied to this subsequence, we would obtain $\limsup_{i \to \infty} \frac{a_{N_i,\beta}}{\log N_i} \leq (1-\eps)c$. This would be a contradiction. Thus $\liminf_{N \to \infty} \lambda_N \geq (1-\eps)c $ for any $\eps>0$ and $\liminf_{N \to \infty} \lambda_N \geq c$.
\medskip

\noindent The equivalence   
$$\left(c \leq \liminf_{N \to \infty} \lambda_N \right) \iff \left(c \leq \liminf_{N \to \infty} \frac{a_{N,\beta}}{\log N} \right)$$
for any $c>0$ implies immediately the two first items of the Proposition. For the third item, let us suppose first that $\frac{\lambda_N}{N}$ is bounded from above by a constant $C$. If $\lambda_N \leq C$, then we are in the same situation as above, and $a_{N,\beta} \leq (C+o_\beta(1)) \log N$; \emph{a fortiori}, $\frac{a_{N,\beta}}{N\,\log 2}<1$ for $N$ large enough. On the other hand if $C \leq \lambda_N \leq C N$, then the $O_\beta(\cdot)$ in the estimate of $a_{N,\beta}$ written at the beginning of this proof is a $O_{\beta,C}(1)$, so:
$$\frac{a_{N,\beta}}{N} = \theta\!\left(\frac{\lambda_N}{N}\right) + O_{\beta,C}\!\left(\frac{1}{N}\right)$$
and $\limsup_{N \to \infty} \frac{a_{N,\beta}}{N} \leq \theta(C)<\theta(+\infty)=\log 2$. This proves one implication and the converse implication has an analogous proof.
\end{proof}
\medskip

In the following we shall consider sequences $(a_{N,\beta})_{N \in \N}$ such that
$$\liminf_{N \to +\infty} \lambda_N > 0 \qquad;\qquad \lim_{N \to +\infty} v_{N,\beta}=+\infty,$$
$\lambda_N = \frac{h_N}{\beta}$ being associated to $a_{N,\beta}$ by Equation \eqref{eq:critical_parameter}.
We call such sequences \emph{balanced} and being balanced will be a sufficient condition in order to obtain an asymptotic equivalent of the probability $\proba_\beta[X_N \geq a_{N,\beta}]$. This corresponds to the regime of \emph{moderate deviations}, which is identified by the following:

\begin{proposition}[Regime of moderate deviations]\label{prop:moderate_regime}
Given a sequence $(a_{N,\beta})_{N \in \N}$, the following conditions are equivalent:
\begin{enumerate}
    \item The sequence $(a_{N,\beta})_{N \in \N}$ is balanced.
    \item We have $\liminf_{N \to \infty} \lambda_N >0$ and $\lim_{N \to \infty} \frac{\lambda_N}{N}=0$.
    \item We have $\liminf_{N \to \infty} \frac{a_{N,\beta}}{\log N} > 0$ and $\lim_{N \to \infty} \frac{a_{N,\beta}}{N}=0$.
\end{enumerate} 
\end{proposition}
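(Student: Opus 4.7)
The plan is to prove the two equivalences (2)$\iff$(3) and (1)$\iff$(2) separately, in both cases starting from the asymptotic estimates already established in Propositions \ref{prop:mean_variance_beta2} and \ref{prop:mean_variance_jacobi}. The first equivalence is essentially contained in Lemma \ref{lem:regime_sequences}, while the second hinges on a short monotonicity computation identifying when the tilted variance blows up.

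For (2)$\iff$(3), the equivalence of the first conditions $\liminf \lambda_N > 0 \iff \liminf a_{N,\beta}/\log N > 0$ is the content of the first item of Lemma \ref{lem:regime_sequences}. For the second conditions, I would reuse the argument already appearing in the proof of Lemma \ref{lem:regime_sequences}: combining \eqref{eq:mean_huapickrell} with Proposition \ref{prop:mean_variance_jacobi} gives
$$\frac{a_{N,\beta}}{N} = \theta\!\left(\frac{\lambda_N}{N}\right) + O_\beta\!\left(\frac{1 + \log(1+2\lambda_N/N)}{N} + \frac{1}{N\lambda_N}\right),$$
where $\theta:\R_+ \to [0,\log 2)$ is the continuous strictly increasing bijection with $\theta(0)=0$. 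Under $\liminf \lambda_N > 0$, the remainder is $o(1)$, so $\lambda_N/N \to 0$ forces $a_{N,\beta}/N \to 0$ by continuity of $\theta$; the converse follows by extracting a subsequence along which $\lambda_N/N$ is bounded below by some $c>0$ and using the fact that $\theta(c)>0$, which contradicts $a_{N,\beta}/N\to 0$.

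For (1)$\iff$(2), both conditions contain $\liminf \lambda_N > 0$, so it suffices to show, under this common hypothesis, that $v_{N,\beta}\to+\infty \iff \lambda_N/N \to 0$. Combining Equation \eqref{eq:variance_huapickrell} with Proposition \ref{prop:mean_variance_jacobi}, I would obtain
$$v_{N,\beta} = \frac{1}{\beta'}\,v\!\left(\frac{\lambda_N}{N}\right) + O_\beta(1), \qquad v(c) = \frac{1}{2}\log\!\frac{1}{4c} + \log(1+2c) - \frac{1}{2}\log(1+c).$$
A direct computation yields
$$v'(c) = -\frac{1}{2c} + \frac{2}{1+2c} - \frac{1}{2(1+c)} = -\frac{1}{2c(1+c)(1+2c)} < 0,$$
so $v$ decreases strictly from $v(0^+) = +\infty$ to $v(+\infty) = 0$. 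In particular $v$ stays bounded on any interval $[c_0,+\infty)$ with $c_0 > 0$, so $v_{N,\beta} \to +\infty$ if and only if $\lambda_N/N \to 0$, which is the desired equivalence.

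The only mild technical point to clear is that the $O_\beta$-remainders in Propositions \ref{prop:mean_variance_beta2} and \ref{prop:mean_variance_jacobi} stay \emph{uniformly} bounded across the full range of allowed $\lambda_N$, including the regime $\lambda_N/N \to +\infty$ that is \emph{a priori} not excluded when only $v_{N,\beta}\to+\infty$ is assumed. Each such remainder is either of the form $O_\beta(\lambda_N^{-k})$ with $k\geq 1$ or a bounded expression of $\eps_{\lambda_N}$ (for instance $\log\!\frac{N+\lambda_N}{N+2\lambda_N}$), so uniformity follows at once from $\liminf \lambda_N > 0$.
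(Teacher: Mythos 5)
Your argument is correct and follows essentially the same route as the paper's proof: for $(1)\iff(2)$, extract from Equation~\eqref{eq:variance_huapickrell} and Proposition~\ref{prop:mean_variance_jacobi} that $\beta' v_{N,\beta} = \frac{1}{2}\log\bigl(1+\frac{1}{4\eps_{\lambda_N}(1+\eps_{\lambda_N})}\bigr) + O_\beta(1)$ and read off the equivalence with $\lambda_N/N\to 0$; for $(2)\iff(3)$, use the first item of Lemma~\ref{lem:regime_sequences} together with $a_{N,\beta}/N = \theta(\lambda_N/N) + o(1)$. Your explicit monotonicity computation for $v'(c)$ is harmless but unnecessary: once you rewrite $v(c)=\frac{1}{2}\log\bigl(1+\frac{1}{4c(1+c)}\bigr)$ (as the paper does), the claimed behaviour is immediate without differentiating.
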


\begin{proof}
We deal with  the case $\beta=2$ (so, $\lambda_N=\delta_N$); the general case follows by similar arguments, thanks to the Comparison Theorem \ref{thm:comparison} and to Proposition \ref{prop:mean_variance_jacobi}. Notice first that if $\liminf_{N \to \infty} \lambda_N >0$, then by Equation \eqref{eq:variance_huapickrell},
$$v_N = \frac{1}{2} \log\!\left(1+\frac{1}{4\,\eps_N(1+\eps_N)}\right)+ O(1),$$
where $\eps_N = \frac{\lambda_N}{N}$. Therefore, $v_N$ goes to $+\infty$ if and only if $\eps_N$ goes to $0$. This proves the equivalence between the two first items.\medskip

\noindent Let us now prove the equivalence between the two last items. We already know that $\liminf_{N \to \infty} \lambda_N > 0$ if and only if $\liminf_{N \to \infty} \frac{a_{N}}{\log N} > 0$ (this is the first item of the previous proposition). In this setting, we have shown above that
$$\frac{a_N}{N} = \theta\!\left(\frac{\lambda_N}{N}\right) + O\!\left(\frac{1}{N}\right);$$
therefore, $\frac{a_N}{N}$ goes to $0$ if and only if $\frac{\lambda_N}{N}$ goes to $0$.
\end{proof}

\begin{proposition}[Variances in the regime of moderate deviations]\label{prop:log_N_aN}
Consider a balanced sequence $(a_{N,\beta})_{N \in \N}$. We have:
$$\beta\,v_{N,\beta} = \log\!\left(\frac{N}{\lambda_N}\right) +O_\beta(1)=\log\!\left(\frac{N}{a_{N,\beta}}\right) + O_\beta\!\left(\log \log \!\left(\frac{N}{a_{N,\beta}}\right) \right).$$
\end{proposition}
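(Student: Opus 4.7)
The plan is to handle the two equalities in turn, using the expansions of $v_{N,\beta}$ and $a_{N,\beta}$ established in the previous subsection.

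For the first equality, I would start from Proposition~\ref{prop:mean_variance_jacobi}, which writes $v_{N,\beta} = \frac{1}{\beta'}v_N + R_{N,\beta}$, where each term of the remainder $R_{N,\beta}$ carries a factor of the form $1/(N+k\lambda_N)$, $1/(N+k\lambda_N)^2$, or $1/(\lambda_N)^k$ with $k \geq 1$. Since the balanced hypothesis gives $\liminf \lambda_N > 0$, every such term is $O_\beta(1)$, so $R_{N,\beta} = O_\beta(1)$. Equation~\eqref{eq:variance_huapickrell} applied with $\delta_N = \lambda_N$ then reads
\[
v_N = \frac{1}{2}\log\!\left(\frac{N}{4\lambda_N}\right) + \frac{1}{2}\log(1+2\eps_{\lambda_N}) - \frac{1}{4}\log(1+\eps_{\lambda_N}) + O(1),
\]
and since $\eps_{\lambda_N} = \lambda_N/N \to 0$ by Proposition~\ref{prop:moderate_regime}, the logarithms involving $\eps_{\lambda_N}$ are $O(1)$. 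Multiplying through by $\beta = 2\beta'$ yields the first equality $\beta\,v_{N,\beta} = \log(N/\lambda_N) + O_\beta(1)$.

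For the second equality, the task is to convert $\log(N/\lambda_N)$ into $\log(N/a_{N,\beta})$. I would use the expansion of $a_{N,\beta}$ derived in the proof of Lemma~\ref{lem:regime_sequences}, namely
\[
a_{N,\beta} = N\,\theta(\lambda_N/N) + O_\beta\!\left(1 + \tfrac{1}{\lambda_N} + \log(1+2\lambda_N/N)\right),
\]
combined with the expansion $\theta(x) = -x\log x + O(x)$ as $x \to 0^+$. Setting $L := \log(N/\lambda_N)$, which tends to $+\infty$ since $\lambda_N = o(N)$ in the balanced regime (Proposition~\ref{prop:moderate_regime}), this produces the multiplicative form
\[
a_{N,\beta} = \lambda_N\,L\,\bigl(1 + O_\beta(1/L)\bigr),
\]
so that $\log a_{N,\beta} = \log\lambda_N + \log L + O_\beta(1/L)$, or equivalently $L - \log(N/a_{N,\beta}) = \log L + O_\beta(1/L)$. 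A second logarithm then shows $\log L = \log\log(N/a_{N,\beta}) + o(1)$, because $\log(N/a_{N,\beta}) \sim L$; consequently
\[
\log(N/\lambda_N) = \log(N/a_{N,\beta}) + O\!\left(\log\log(N/a_{N,\beta})\right),
\]
and combining this with the first equality gives the second.

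The calculations are essentially routine once the correct expansions are invoked, and I expect the main (mild) obstacle to be the verification of uniformity across the whole balanced regime, which stretches from $\lambda_N = \Theta(1)$ (where $a_{N,\beta} \asymp \log N$ and both logarithms $\log(N/\lambda_N)$ and $\log(N/a_{N,\beta})$ are of order $\log N$) all the way to $\lambda_N \to +\infty$ with $\lambda_N/N \to 0$. The key observation enabling this uniformity is that the additive error $O_\beta(\lambda_N)$ in the expansion of $a_{N,\beta}$ is always negligible compared to the leading contribution $\lambda_N L$, precisely because $L = \log(N/\lambda_N)$ diverges throughout the balanced regime; this is what legitimises the clean multiplicative form on which the final manipulation of logarithms relies.
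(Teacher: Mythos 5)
Your proof is correct and follows essentially the same route as the paper's: use the polygamma expansions of Proposition~\ref{prop:mean_variance_jacobi} together with Equation~\eqref{eq:variance_huapickrell} to reduce $\beta\,v_{N,\beta}$ to $\log(N/\lambda_N)+O_\beta(1)$, then write $a_{N,\beta}$ in the multiplicative form $\lambda_N\,L\,(1+O_\beta(1/L))$ with $L=\log(N/\lambda_N)$ and take logarithms to trade $\log(N/\lambda_N)$ for $\log(N/a_{N,\beta})$ at cost $O(\log L)$. The only cosmetic difference is that the paper first does the case $\beta=2$ and then transfers to general $\beta$ via $a_{N,\beta}=a_N(1+O_\beta(1/a_N))$ and $\beta v_{N,\beta}=2v_N+O_\beta(1)$, whereas you work with $a_{N,\beta}$ directly from the outset; the underlying estimates invoked are the same.
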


\begin{proof}
We first treat the case $\beta=2$. Knowing that $1\lesssim \lambda_N \ll N$, our usual estimates \eqref{eq:mean_huapickrell} and \eqref{eq:variance_huapickrell} yield:
\begin{align*} 
\frac{a_N}{N} &= \frac{\lambda_N}{N}\left(\log\!\left(\frac{N}{\lambda_N}\right) + O(1) \right);\\
2\,v_N &= \log\!\left(\frac{N}{\lambda_N}\right) + O(1).
\end{align*}
Taking the logarithm of the first equation shows that $\log (\frac{N}{\lambda_N})$ and $\log (\frac{N}{a_N})$ are asymptotically equivalent and we then have:
$$2\,v_N = \log\!\left(\frac{N}{\lambda_N}\right) +O(1)=\log\!\left(\frac{N}{a_N}\right) + O\!\left(\log \log \!\left(\frac{N}{a_N}\right) \right).$$
For $\beta\neq 2$ we have by Proposition \ref{prop:mean_variance_jacobi}:
\begin{align*}
\frac{a_{N,\beta}}{N} &= \frac{a_N + O_\beta(1)}{N} = \frac{a_N}{N}\left(1+O_\beta\!\left(\frac{1}{a_N}\right)\right); \\
\beta\,v_{N,\beta} &= 2\,v_N + O_\beta(1)
\end{align*}
if $(a_{N,\beta})_{N \in \N}$ is a balanced sequence. Therefore, 
\begin{align*}
\beta\,v_{N,\beta} = \log\!\left(\frac{N}{\lambda_N}\right) +O_\beta(1) &= \log\!\left(\frac{N}{a_N}\right) + O_\beta\!\left(\log \log \!\left(\frac{N}{a_N}\right) \right)\\
&=\log\!\left(\frac{N}{a_{N,\beta}}\right) + O_\beta\!\left(\log \log \!\left(\frac{N}{a_{N,\beta}}\right) \right).\qedhere
\end{align*}
\end{proof}
\medskip

If we do not have $\frac{a_{N,\beta}}{N} \to 0$, then we fall in the regime of \emph{large deviations} which is covered by Theorem \ref{thm:super_large}. Let us summarise the asymptotic estimates which will be useful in this setting:

\begin{proposition}[Regime of large deviations]\label{prop:large_regime}
The following assertions are equivalent:
$$\liminf_{N \to \infty} \left(\frac{a_{N,\beta}}{N} \right)> 0\quad\iff\quad \liminf_{N \to \infty} \left(\frac{\lambda_N}{N} \right) > 0 \quad \iff \quad \limsup_{N \to \infty} v_{N,\beta} <+\infty.$$
Then these quantities are related by the following formulas:
\begin{align*}
\frac{a_{N,\beta}}{N} &=  \theta\!\left(\frac{\lambda_N}{N}\right) +  O_\beta\!\left(\frac{1}{N}\right);\\
v_{N,\beta} &=\frac{1}{\beta}\,\log \left(1+\frac{1}{4\frac{\lambda_N}{N}(1+\frac{\lambda_N}{N})}\right)+ O_\beta\!\left(\frac{1}{N}\right).
\end{align*}
\end{proposition}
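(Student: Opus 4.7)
The plan is to first establish the two asymptotic formulas at the end of the statement, restricted to sequences with $\liminf_{N \to \infty} \frac{\lambda_N}{N} > 0$, and then to exploit the monotonicity properties of the functions $\theta$ and $g(x) = \log(1 + \frac{1}{4x(1+x)})$ in order to deduce the three-way equivalence. The key observation is that in the large deviation regime $\lambda_N \gtrsim N$, all the remainder terms in Propositions \ref{prop:mean_variance_beta2} and \ref{prop:mean_variance_jacobi} which contain inverse powers of $\lambda_N$ or of $N+\lambda_N$ become $O_\beta(N^{-k})$ for various $k\geq 1$, which is a crucial simplification.

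First I would assume $\lambda_N/N \geq c > 0$ along some subsequence (or throughout). Writing $\eps_{\lambda_N} = \lambda_N/N$ in Equation \eqref{eq:mean_huapickrell}, the first line rearranges exactly into $N\,\theta(\eps_{\lambda_N})$, and the second line is $O(1/\lambda_N) = O(1/N)$. Dividing by $N$ and invoking the comparison formula for $a_{N,\beta} - a_N$ from Proposition \ref{prop:mean_variance_jacobi} (whose correction terms are also $O_\beta(1)$ in this regime) I obtain
\begin{equation*}
\frac{a_{N,\beta}}{N} = \theta\!\left(\frac{\lambda_N}{N}\right) + O_\beta\!\left(\frac{1}{N}\right).
\end{equation*}
For the variance, a direct simplification of the two first lines of \eqref{eq:variance_huapickrell} yields
\begin{equation*}
\tfrac{1}{2}\log\!\left(\tfrac{N}{4\lambda_N}\right) + \log(1+2\eps_{\lambda_N}) - \tfrac{1}{2}\log(1+\eps_{\lambda_N}) = \tfrac{1}{2}\log\!\left(\frac{(1+2\eps_{\lambda_N})^2}{4\eps_{\lambda_N}(1+\eps_{\lambda_N})}\right) = \tfrac{1}{2}\log\!\left(1+\frac{1}{4\eps_{\lambda_N}(1+\eps_{\lambda_N})}\right),
\end{equation*}
while the remainder is $O(1/N^2)$. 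Again, the comparison identity for $v_{N,\beta} - \frac{1}{\beta'}v_N$ has correction terms of order $O_\beta(1/N)$ in this regime. Since $\frac{1}{\beta'} = \frac{2}{\beta}$, this gives the claimed expression $v_{N,\beta} = \frac{1}{\beta}\log(1+\tfrac{1}{4\eps_{\lambda_N}(1+\eps_{\lambda_N})}) + O_\beta(\frac{1}{N})$.

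To derive the equivalences, I would use that $\theta$ is a continuous strictly increasing bijection from $\R_+$ onto $[0,\log 2)$ with $\theta(0)=0$, and that $g(x) = \log(1+\frac{1}{4x(1+x)})$ is a continuous strictly decreasing bijection from $(0,+\infty)$ onto $(0,+\infty)$. The implication $(\liminf \lambda_N/N > 0) \Longrightarrow (\liminf a_{N,\beta}/N > 0 \text{ and } \limsup v_{N,\beta} < +\infty)$ is then immediate from the two formulas just proved. For the converse directions, I would argue by contradiction: if $\liminf \lambda_N/N = 0$, we can extract a subsequence along which either $\lambda_N$ is bounded (so $a_{N,\beta} = O(\log N)$ by the discussion in Lemma \ref{lem:regime_sequences}) or $\lambda_N \to +\infty$ with $\lambda_N/N \to 0$ (so the sequence is balanced and $a_{N,\beta}/N \to 0$ by Proposition \ref{prop:moderate_regime}); in both cases $a_{N,\beta}/N \to 0$ along the subsequence, and moreover $v_{N,\beta} \to +\infty$ since the dominant term $\frac{1}{\beta}g(\eps_{\lambda_N})$ diverges. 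This contradicts each of conditions (1) and (3). The main bookkeeping concern is keeping track of all the subleading contributions from Proposition \ref{prop:mean_variance_jacobi}, but each of them carries an explicit prefactor of $\frac{1}{N+\lambda_N}$, $\frac{1}{N+2\lambda_N}$ or $\frac{1}{\lambda_N}$, all of which are $O_\beta(1/N)$ once $\lambda_N/N$ is bounded below, so no serious obstacle arises.
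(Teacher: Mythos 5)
Your argument is correct and follows essentially the same route as the paper's own (very terse) proof: combine the explicit first-line terms of \eqref{eq:mean_huapickrell} and \eqref{eq:variance_huapickrell}, recognising them as $N\,\theta(\eps_{\lambda_N})$ and $\tfrac12 \log\bigl(1+\frac{1}{4\eps_{\lambda_N}(1+\eps_{\lambda_N})}\bigr)$ respectively, then transfer to general $\beta$ via Proposition \ref{prop:mean_variance_jacobi}, all the correction terms being $O_\beta(1)$ or $O_\beta(1/N)$ once $\lambda_N \gtrsim N$. You actually spell out the three-way equivalence more explicitly than the paper does (the paper only says ``whence the result''); the one slightly loose spot is that the divergence of $v_{N,\beta}$ along a subsequence where $\lambda_N/N\to0$ is asserted via the formula $\frac{1}{\beta}g(\eps_{\lambda_N})$ even though you only established that formula under the assumption $\liminf \lambda_N/N>0$ --- for the case $\lambda_N\to\infty$, $\lambda_N/N\to0$ this is already covered by Proposition \ref{prop:log_N_aN}, and for $\lambda_N$ bounded below it follows directly from the first line of \eqref{eq:variance_huapickrell}, but the case $\lambda_N\to 0$ would require a separate (easy) observation. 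This is a minor technical point and does not change the substance: your approach is the paper's.
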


\begin{proof}
The case $\beta=2$ ($\beta'=1$) follows immediately from the estimates \eqref{eq:mean_huapickrell} and \eqref{eq:variance_huapickrell}; in both cases the two first terms of these asymptotic expansions become of the same order of magnitude and their combination yield the formul{\ae} above. For $\beta \neq 2$ our results of comparison (Theorem \ref{thm:comparison} and Proposition \ref{prop:mean_variance_jacobi}) give in the regime $\lambda_N \approx N$:
\begin{align*}
a_{N,\beta} &= a_N + O_\beta(1);\\
v_{N,\beta} &= \frac{1}{\beta'}\,v_N+ O_\beta\!\left(\frac{1}{N}\right),
\end{align*}
whence the result.
\end{proof}
\bigskip

\section{Central limit theorems and their speed of convergence}\label{sec:CLT}
In this section we give sufficient conditions in order to have a central limit theorem
$$\frac{X_N-a_{N,\beta}}{\sqrt{v_{N,\beta}}} \rightharpoonup_{\substack{\proba_{(\beta,\delta_N)}\\N \to \infty}}\mathcal{N}(0,1),$$
and we compute an upper bound for the Kolmogorov distance between these two random variables thanks to Lemma \ref{lem:control_zone}.

\subsection{Control of the Fourier transforms}
Suppose first that $\beta=2$. By the Taylor integral formula,
$$\log \esper_{(2,\delta_N)}[\E^{\I\xi X_N}] = a_N\,\I \xi - \frac{v_N}{2}\,\xi^2 + \int_0^1\frac{(1-u)^2}{2}\,\Lambda_N'''(h_N+\I u \xi)\,(\I \xi)^3 \DD{u}.$$
Notice that for $\Re(z)>0$ the leading term of the asymptotic expansion of $\Lambda_N'''(z)$ provided before Proposition \ref{prop:mean_variance_beta2} is smaller in module than $\frac{1}{2|z|}$. Therefore in the Taylor integral formula the integral can be controlled as follows:
\begin{align*}
I &\leq \int_{0}^1 \frac{(1-u)^2}{4|h_N+\I u \xi|}\,|\xi|^3\DD{u} + O\!\left(\int_0^1 \frac{(1-u)^2}{2} \left|\frac{\xi}{h_N+\I u\xi}\right|^3 \DD{u}\right) \\
&\leq \int_{0}^1 \frac{1}{4\sqrt{(h_N)^2+u^2\xi^2}}\,|\xi|^3\DD{u} + O\!\left(\int_0^1 \frac{|\xi|^3}{((h_N)^2+u^2\xi^2)^{\frac{3}{2}}} \DD{u}\right) \\
&\leq \frac{\xi^2}{4} \int_0^{\frac{|\xi|}{h_N}} \frac{1}{\sqrt{1+v^2}}\DD{v} + O\!\left(\frac{\xi^2}{(h_N)^2}\int_0^{\frac{|\xi|}{h_N}}\frac{1}{(1+v^2)^{\frac{3}{2}}}\DD{v}\right) .\end{align*}
Thus, 
\begin{align*}
I &\leq \frac{\xi^2}{4}\left(\mathrm{arcsinh}\left(\frac{|\xi|}{h_N}\right)+O\!\left(\frac{|\xi|}{(h_N)^2\sqrt{(h_N)^2+\xi^2}}\right)\right) \\ 
&\leq \frac{\xi^2}{4}\left(\log\!\left(1+\frac{|\xi|}{\delta_N}\right)+O\!\left(\frac{|\xi|}{(\delta_N)^2\sqrt{(\delta_N)^2+\xi^2}}\right)\right),
\end{align*}
and we have proved:

\begin{proposition}[Control of the Fourier transform for Hua--Pickrell distributions]\label{prop:control_huapickrell}
Consider the random variable $X_N$ under a Hua--Pickrell distribution $\proba_{(2,\delta_N)}$. Uniformly for $\delta_N>0$, we have
$$\esper_{(2,\delta_N)}\!\left[\E^{\I \xi (X_N - a_N)}\right] = \exp\left(-\frac{v_N\,\xi^2}{2} + \xi^2\, O\!\left(\log\!\left(1+\frac{|\xi|}{\delta_N}\right)\right)\right),$$
and the $O(\cdot)$ in the equation above is actually smaller than 
$$\frac{1}{4} \left(\log\!\left(1+\frac{|\xi|}{\delta_N}\right)+\frac{C\,|\xi|}{(\delta_N)^2\sqrt{(\delta_N)^2+\xi^2}}\right)$$
for some constant $C>0$.
\end{proposition}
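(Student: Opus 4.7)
Following Example \ref{ex:hua_pickrell}, the distribution of $X_N$ under $\proba_{(2,\delta_N)}$ is the exponential tilt of $X_N$ under $\proba_2$ with tilting parameter $h_N = 2\delta_N$. Consequently,
\begin{equation*}
\log \esper_{(2,\delta_N)}[\E^{\I \xi X_N}] = \Lambda_N(h_N + \I \xi) - \Lambda_N(h_N),
\end{equation*}
and since $\Lambda_N'(h_N) = a_N$ and $\Lambda_N''(h_N) = v_N$, Taylor's formula with integral remainder yields
\begin{equation*}
\log \esper_{(2,\delta_N)}[\E^{\I \xi (X_N - a_N)}] = -\frac{v_N\, \xi^2}{2} + (\I\xi)^3 \int_0^1 \frac{(1-u)^2}{2}\, \Lambda_N'''(h_N + \I u \xi) \DD{u}.
\end{equation*}
The goal thus reduces to bounding this remainder by $\xi^2 \log(1 + |\xi|/\delta_N)$ up to universal constants.

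The key input is the bound on the third derivative $\Lambda_N'''(z)$ derived just before Proposition \ref{prop:mean_variance_beta2}: for $\Re(z) > 0$, we have $\Lambda_N'''(z) = -N^2/(z(N+z)(2N+z)) + O(|z|^{-3})$, and in particular $|\Lambda_N'''(z)| \leq \frac{1}{2|z|} + O(|z|^{-3})$. Plugging this in and using $|h_N + \I u \xi| = \sqrt{(h_N)^2 + u^2 \xi^2}$, I would perform the substitution $v = u|\xi|/h_N$ in order to turn the leading part of the remainder into
\begin{equation*}
\frac{\xi^2}{4} \int_0^{|\xi|/h_N} \frac{\DD{v}}{\sqrt{1+v^2}} = \frac{\xi^2}{4}\, \mathrm{arcsinh}\!\left(\frac{|\xi|}{h_N}\right),
\end{equation*}
and then use the elementary inequality $\mathrm{arcsinh}(x) = \log(x + \sqrt{1+x^2}) \leq \log(1+2x)$ together with $h_N = 2\delta_N$ to conclude that this leading contribution is at most $\frac{\xi^2}{4} \log(1 + |\xi|/\delta_N)$.

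The subleading $O(|z|^{-3})$ term in $\Lambda_N'''(z)$ can be handled analogously: after the same change of variables, it produces an integral of $(1+v^2)^{-3/2}$, which gives a contribution of order $\xi^2\, |\xi|/((\delta_N)^2 \sqrt{(\delta_N)^2 + \xi^2})$, matching the explicit constant announced in the proposition. The main obstacle in this scheme is the need to have the bound $|\Lambda_N'''(z)| \leq \frac{1}{2|z|}(1+o(1))$ \emph{uniformly} for all $z$ with $\Re(z) > 0$, rather than merely asymptotically as $|z| \to \infty$. To address this I would rely on the integral representations of the polygamma functions $\psi_1$ and $\psi_2$ on the right half-plane (or equivalently on the Binet-type representation used earlier to derive $\Lambda_{N,\beta}$), which provide monotone-type estimates valid for every $z$ with $\Re(z) > 0$, not just asymptotic ones.
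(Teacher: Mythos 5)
Your proposal follows essentially the same route as the paper: Taylor's formula with integral remainder for $\Lambda_N(h_N+\I\xi)$ around $h_N$, the pointwise bound on the leading term of $\Lambda_N'''(z)$ (which is exactly $\leq \frac{1}{2|z|}$ on the right half-plane since $|N+z|>N$ and $|2N+z|>2N$), the substitution $v=u|\xi|/h_N$ producing the $\mathrm{arcsinh}$, and $h_N=2\delta_N$ to pass to $\log(1+|\xi|/\delta_N)$. Your remark about uniformity of the $O(|z|^{-3})$ remainder is well taken — the paper handles it implicitly rather than via the Binet-type integral representation you suggest, but the method and conclusion are the same.
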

\medskip

In order to obtain an analogue proposition with $\beta\neq 2$, we use our Comparison Theorem \ref{thm:comparison}. By taking the third derivatives of the terms of the identity of this theorem, and by using the asymptotics of the polygamma functions, we obtain:
\begin{align*}
\Lambda_{N,\beta}'''(z) &= \frac{1}{(\beta')^2}\,\Lambda_N'''\!\left(\frac{z}{\beta'}\right) +  O_\beta\!\left(\frac{1}{|z|^2}+\frac{1}{(\Re(z))^3}\right).
\end{align*}
The Taylor integral formula for $\Lambda_{N,\beta}(h_N+\I \xi)$ gives then
$$\log \esper_{(\beta,\delta_N)}[\E^{\I\xi X_N}] = a_{N,\beta}\,\I \xi - \frac{v_{N,\beta}}{2}\,\xi^2 + \int_0^1\frac{(1-u)^2}{2}\,\Lambda_{N,\beta}'''(h_N+\I u \xi)\,(\I \xi)^3 \DD{u},$$
with an integral which is controlled by:
$$I \leq \frac{\xi^2}{4\beta'} \left(\log\!\left(1+\frac{|\xi|}{\delta_N}\right) + C_\beta \left(\frac{1}{\delta_N} \arctan\!\left(\frac{|\xi|}{\delta_N}\right) + \frac{|\xi|}{(\delta_N)^3}  \right)\right)$$
for some constant $C_\beta>0$. So:
\begin{proposition}[Control of the Fourier transform for circular Jacobi ensembles]\label{prop:control_circular_jacobi}
Consider the random variable $X_N$ chosen according to distribution of eigenvalues of the circular $(\beta,\delta_N)$ Jacobi ensemble. Uniformly for $\delta_N>0$ we have:
$$\esper_{(\beta,\delta_N)}\!\left[\E^{\I \xi (X_N-a_{N,\beta})}\right] = \exp\left(-\frac{v_{N,\beta}\,\xi^2}{2}+\xi^2\,O_\beta\!\left(\log\!\left(1+\frac{|\xi|}{\delta_N}\right)+\frac{\arctan(\frac{|\xi|}{\delta_N})}{\delta_N} +\frac{|\xi|}{(\delta_N)^3}\right)\right).$$
\end{proposition}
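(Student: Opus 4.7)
The plan is to mimic the proof of Proposition \ref{prop:control_huapickrell}, substituting the estimate of $\Lambda_N'''$ by the one for $\Lambda_{N,\beta}'''$ that the Comparison Theorem \ref{thm:comparison} produces. First I would write the Taylor integral formula at the point $h_N$ in the direction $\I\xi$:
$$\log \esper_{(\beta,\delta_N)}[\E^{\I\xi X_N}] = a_{N,\beta}\,\I\xi - \frac{v_{N,\beta}}{2}\,\xi^2 + R_N(\xi),\qquad R_N(\xi) = \int_0^1 \frac{(1-u)^2}{2}\,\Lambda_{N,\beta}'''(h_N + \I u\xi)\,(\I\xi)^3 \DD{u}.$$
Differentiating three times the identity of Theorem \ref{thm:comparison} and inserting the asymptotic expansions of $\psi_0,\psi_1,\psi_2$ recalled before Proposition \ref{prop:mean_variance_beta2} gives, uniformly on $\{\Re z>0\}$,
$$\Lambda_{N,\beta}'''(z) = \frac{1}{(\beta')^2}\,\Lambda_N'''\!\left(\frac{z}{\beta'}\right) + O_\beta\!\left(\frac{1}{|z|^2}+\frac{1}{(\Re z)^3}\right).$$

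I would then split $R_N(\xi)$ into two pieces according to this decomposition. The first piece, coming from $\frac{1}{(\beta')^2}\Lambda_N'''(z/\beta')$, is estimated exactly as in the proof of Proposition \ref{prop:control_huapickrell}: using the bound $|\Lambda_N'''(w)|\lesssim \frac{1}{|w|}$ (leading term of the expansion) and the change of variable $v=u|\xi|/h_N$, one recovers the main term $\frac{\xi^2}{4\beta'}\log(1+|\xi|/\delta_N)$ together with the subdominant $\frac{|\xi|^3}{(\delta_N)^2\sqrt{(\delta_N)^2+\xi^2}}$ correction.

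For the second piece I would control separately the two new error contributions. The term in $1/|z|^2$ yields, after the same change of variable,
$$\int_0^1 (1-u)^2\,\frac{|\xi|^3}{|h_N+\I u\xi|^2}\DD{u} \;=\; |\xi|^3\int_0^1 \frac{\DD{u}}{(h_N)^2+u^2\xi^2} \;\lesssim\; \frac{|\xi|^2}{h_N}\,\arctan\!\left(\frac{|\xi|}{h_N}\right);$$
the term in $1/(\Re z)^3$ is simpler and gives a contribution of order $|\xi|^3/(h_N)^3$. Substituting $h_N=2\delta_N$ and collecting everything inside the exponential produces precisely the three summands $\log(1+|\xi|/\delta_N)$, $\arctan(|\xi|/\delta_N)/\delta_N$ and $|\xi|/(\delta_N)^3$ in the announced $O_\beta(\cdot)$.

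The proof involves essentially no new ideas beyond the $\beta=2$ case; the one point that requires a bit of care is the bookkeeping of the change of variables $v=u|\xi|/h_N$, and the verification that the two extra error terms coming from the Comparison Theorem integrate to the arctangent and cubic corrections respectively. Once these elementary computations are done, exponentiation finishes the argument.
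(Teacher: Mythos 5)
Your proposal follows exactly the paper's route: differentiate the Comparison Theorem three times to obtain the decomposition $\Lambda_{N,\beta}'''(z) = \frac{1}{(\beta')^2}\,\Lambda_N'''(z/\beta') + O_\beta(|z|^{-2} + (\Re z)^{-3})$, insert this into the third-order Taylor remainder, and bound the three contributions separately. The only slip is cosmetic: when you estimate the $1/|z|^2$ piece you write an equality where you have actually discarded the factor $(1-u)^2 \leq 1$, so it should be an inequality; the subsequent change of variable giving $\frac{|\xi|^2}{h_N}\arctan(|\xi|/h_N)$ and the cubic bound for the $1/(\Re z)^3$ piece are both correct. Since $h_N = 2\delta_N$, these reproduce the stated $O_\beta$ terms up to harmless constants, so the argument is complete.
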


\subsection{Speed of convergence estimates}
Suppose that $\delta_N$ is bounded from below by a constant, say $\delta_N \geq 1$. Note then that in Propositions \ref{prop:control_huapickrell} and \ref{prop:control_circular_jacobi}, the remainder is always a $O_\beta(\frac{|\xi|}{\delta_N})$. Therefore Lemma \ref{lem:control_zone} ensures that under the law $\proba_{(\beta,\delta_N)}$,
\begin{equation}
\dkol\left(\frac{X_N-a_{N,\beta}}{\sqrt{v_{N,\beta}}}\,,\,\mathcal{N}_\R(0,1)\right)= O_\beta\!\left(\frac{1}{\delta_N\,(v_{N,\beta})^{\frac{3}{2}}}\right).\label{eq:kolmogorov}
\end{equation}

\begin{corollary}[Central limit theorem for large parameters $\delta_N$]\label{cor:central_limit}
Fix $\beta>0$ and consider a sequence of parameters $(\delta_N)_{N \in \N}$ such that $1\lesssim \delta_N  \ll N^{\frac{3}{2}}$. Under the laws $\proba_{(\beta,\delta_N)}$, the random variables $X_N$ are asymptotically normal:
$$V_N=\frac{X_N - \esper_{(\beta,\delta_N)}[X_N]}{\sqrt{\var_{(\beta,\delta_N)}(X_N)}} \rightharpoonup_{N \to \infty} \mathcal{N}_\R(0,1).$$
\end{corollary}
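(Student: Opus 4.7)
The plan is to apply the Berry--Esseen-type estimate \eqref{eq:kolmogorov}, which states that, uniformly for $\delta_N\geq 1$,
$$\dkol\!\left(V_N,\,\mathcal{N}_\R(0,1)\right) = O_\beta\!\left(\frac{1}{\delta_N\,(v_{N,\beta})^{3/2}}\right).$$
The corollary will then follow as soon as we show that $\delta_N\,(v_{N,\beta})^{3/2}$ tends to $+\infty$ throughout the range $1\lesssim\delta_N\ll N^{3/2}$. The plan is to split the argument into two regimes, according to whether $\delta_N$ is of smaller order than $N$ or larger, because the variance behaves drastically differently on each side of the threshold $\delta_N\asymp N$.

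In the regime $1\lesssim\delta_N\lesssim N$, the rescaled tilting parameter $\lambda_N = \delta_N/\beta'$ is of the same order as $\delta_N$, and Propositions \ref{prop:log_N_aN} and \ref{prop:large_regime} together yield $v_{N,\beta}\gtrsim_\beta c>0$, with in fact $v_{N,\beta}\gtrsim_\beta \log(N/\delta_N)$ as soon as $\delta_N\ll N$. Either $\delta_N$ stays bounded, in which case $v_{N,\beta}\gtrsim_\beta\log N\to\infty$ and $\delta_N\,(v_{N,\beta})^{3/2}\gtrsim_\beta (\log N)^{3/2}$, or $\delta_N\to+\infty$, in which case already $\delta_N\,(v_{N,\beta})^{3/2}\gtrsim_\beta \delta_N$. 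In both subcases the required divergence is immediate, with no hypothesis on an upper growth rate of $\delta_N$.

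In the delicate regime $\delta_N\gg N$, Proposition \ref{prop:large_regime} still applies (since $\liminf\lambda_N/N>0$) and gives
$$v_{N,\beta} \,=\, \frac{1}{\beta}\,\log\!\left(1+\frac{1}{4(\lambda_N/N)(1+\lambda_N/N)}\right) + O_\beta\!\left(\frac{1}{N}\right).$$
When $\lambda_N/N\to+\infty$ the main term is equivalent to $\frac{1}{4\beta}\,(N/\lambda_N)^2\asymp_\beta (N/\delta_N)^2$, and it dominates the $O_\beta(1/N)$ remainder exactly when $\delta_N\ll N^{3/2}$. Under this hypothesis we will therefore obtain $v_{N,\beta}\asymp_\beta (N/\delta_N)^2$ and
$$\delta_N\,(v_{N,\beta})^{3/2} \,\asymp_\beta\, \frac{N^3}{(\delta_N)^2} \longrightarrow +\infty,$$
which closes the argument.

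The main technical point will be exactly this last step: the variance decays like $(N/\delta_N)^2$ in the regime $\delta_N\gg N$, so the threshold $\delta_N\ll N^{3/2}$ in the statement is what is needed for the main term to dominate the $O_\beta(1/N)$ correction in Proposition \ref{prop:large_regime} and, thereby, for the Berry--Esseen bound \eqref{eq:kolmogorov} to vanish. Any weakening of this hypothesis would require sharper control of the error terms than provided by Lemma \ref{lem:control_zone} combined with Proposition \ref{prop:control_circular_jacobi}.
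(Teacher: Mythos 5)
Your proposal is correct and follows the paper's argument: it invokes the Kolmogorov-distance bound from Equation \eqref{eq:kolmogorov} and checks that $\delta_N\,(v_{N,\beta})^{3/2}\to\infty$ regime by regime, identifying the same threshold $N^{3/2}$ in the range $\delta_N\gg N$. The only difference is cosmetic: where the paper reworks Proposition \ref{prop:mean_variance_jacobi} to get the sharper remainder $O_\beta(N/(\delta_N)^2)$ for the variance, you stick with the coarser $O_\beta(1/N)$ from Proposition \ref{prop:large_regime} and observe that the hypothesis $\delta_N\ll N^{3/2}$ is precisely what makes it negligible next to the main term $\asymp(N/\delta_N)^2$.
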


\begin{proof}
Suppose first that $\delta_N \ll N$. Then $(a_{N,\beta})_{N \in \N}$ is a balanced sequence and both terms of the product $\delta_N\,(v_{N,\beta})^{\frac{3}{2}}$ go to infinity; therefore, the Kolmogorov distance trivially goes to $0$ in this situation. If $\delta_N$ is of order $N$ but not larger, then we are in the regime of large deviations described by Proposition \ref{prop:large_regime}, and $v_{N,\beta}$ stays bounded from below while $\delta_N$ still goes to infinity: so, again, the Kolmogorov distance goes to $0$. We can finally focus on the case where $\delta_N\gg N$. If we rework the equation of Proposition \ref{prop:mean_variance_jacobi}, then we obtain the following estimate of the variance, which is slightly more precise than the one from Proposition \ref{prop:large_regime}:
$$v_{N,\beta} = \frac{1}{\beta}\,\log \left(1+\frac{1}{4\,\frac{\lambda_N}{N}(1+\frac{\lambda_N}{N})}\right) + O_\beta\!\left(\frac{N}{(\delta_N)^2}\right).$$
Taking the Taylor expansion of the logarithm yields
\begin{align*}
v_{N,\beta} &= \frac{N^2}{4\beta(\lambda_N)^2}\left(1+O_\beta\!\left(\frac{1}{N}+\frac{N}{\delta_N}\right)\right);\\
\delta_N\,(v_{N,\beta})^{\frac{3}{2}}&=M_\beta\,\frac{N^3}{(\lambda_N)^2}\left(1+O_\beta\!\left(\frac{1}{N}+\frac{N}{\delta_N}\right)\right)
\end{align*}
for some positive constant $M_\beta$. Thus as long as $\delta_N \ll N^{\frac{3}{2}}$, the estimate of the Kolmogorov distance ensures the asymptotic normality.
\end{proof}

\begin{remark}
If we take for instance $\delta_N=N^{\frac{5}{4}}$, then the Kolmogorov distance is a $O(N^{-\frac{1}{2}})$, but this central limit theorem is a bit strange: the variance of the random variable of interest $X_N$ under $\proba_{(\beta,\delta_N)}$ is in this case also a $O(N^{-\frac{1}{2}})$, so it goes to zero. Thus we have very small variables but which are still well-approximated by Gaussian distributions with adequate variances. We shall see in Section \ref{sec:sharp_polynomial} that the small variances prevent us to give exact asymptotics of the probabilities of large deviations in the regime $a_N=N$; in this case we shall only obtain upper bounds.
\end{remark}

\begin{remark}
The Berry--Esseen estimate can be made a bit more explicit if $1\lesssim \delta_N \lesssim N$. Indeed if $1\lesssim \delta_N \ll N$, then we are in the regime of moderate deviations and Proposition \ref{prop:log_N_aN} shows that $\beta\,v_{N,\beta}$ is equivalent to $\log(\frac{N}{\delta_N})$. 
Therefore with $V_N$ as in Corollary \ref{cor:central_limit} we have
\begin{equation}
\dkol(V_N,\mathcal{N}_{\R}(0,1))= O\!\left(\frac{1}{\delta_N \,(\log (\frac{N}{\delta_N}))^{\frac{3}{2}}}\right).\label{eq:berry_esseen_vn}
\end{equation}
This estimate also holds if $\delta_N$ is of order $N$: indeed if $bN\leq \delta_N \leq cN$ for some positive constants $b$ and $c$, then 
$(\log(\frac{N}{\delta_N}))^{\frac{3}{2}} $ and $ v_{N,\beta}$
are both bounded from below and from above by positive constants, so again they are of the same order. So, Equation \eqref{eq:berry_esseen_vn} holds as soon as $\frac{1}{c}\leq \delta_N \leq cN$ for some constant $c>0$, with an implied constant in the $O(\cdot)$ which depends only on $\beta$ and $c$.
\end{remark}
\bigskip

\section{Proof of the sharp moderate and large deviation principles}\label{sec:sharp_polynomial}

This last section is devoted to the proofs of our main Theorems \ref{thm:super_moderate} and \ref{thm:super_large}. Until the end of this section, $\beta>0$ is a fixed parameter and $(x_N)_{N \in \N} = (a_{N,\beta})_{N \in \N}$ is a sequence of positive numbers, which is supposed balanced in most of Subsection \ref{subsec:precise_moderate_deviations} ($\log N \lesssim a_{N,\beta} \ll N$), and of order $N$ in Subsection \ref{subsec:precise_large_deviations}.

\subsection{Precise moderate deviations}\label{subsec:precise_moderate_deviations}
Suppose that the sequence $(a_{N,\beta})_{N \in \N}$ is balanced. Then the parameter $\eps_N$ of the second step of the general scheme presented in Section \ref{subsec:general_method} can be taken equal to $\frac{1}{h_N\,(v_{N,\beta})^{\frac{3}{2}}}$ and by Proposition \ref{prop:log_N_aN} $v_{N,\beta}$ goes to infinity, so
$$\eps_N \ll \frac{1}{h_N\,\sqrt{v_{N,\beta}}} \ll 1.$$
So we have strong asymptotic normality after tilting. By following the arguments of the validity of the general scheme, we get:
\begin{align*}
\proba_{\beta}[X_N \geq a_{N,\beta}] &= \frac{\esper_{\beta}[\E^{h_NX_N-h_Na_{N,\beta}}] }{h_N\sqrt{2\pi v_{N,\beta}}}\left(1+O_\beta\!\left(\frac{1}{v_{N,\beta}}\right)\right) \\ 
&=\frac{\esper_{\beta}[\E^{h_NX_N-h_Na_{N,\beta}}]}{a_{N,\beta}}\,\sqrt{\frac{1}{2\pi\beta}\log\!\left(\frac{N}{a_{N,\beta}}\right)} \left(1+O_\beta\!\left(\frac{1}{\log(\frac{N}{a_{N,\beta}})}\right)\right).
\end{align*}
Above we go from the first line to the second line by using the following estimates
\begin{align*}
a_{N,\beta} &= a_N + O_\beta(1) =  \lambda_N\,\left(\log\!\left(\frac{N}{\lambda_N}\right) + O_\beta(1)\right);\\
\beta \,v_{N,\beta}&= \log\!\left(\frac{N}{\lambda_N}\right) + O_\beta(1).
\end{align*}
Multiplying the first line by $\frac{\beta}{h_N}=\frac{1}{\lambda_N}$ shows that 
\begin{align*}
\frac{a_{N,\beta}}{h_N} &= v_{N,\beta} + O_\beta(1);\\
h_N\,v_{N,\beta} &= a_{N,\beta} + O_\beta(h_N) = a_{N,\beta} \left(1+O_\beta\!\left(\frac{1}{\log(\frac{N}{a_{N,\beta}})}\right)\right);\\
h_N\,\sqrt{v_{N,\beta}} &= a_{N,\beta} \sqrt{\frac{\beta}{\log(\frac{N}{a_{N,\beta}})}}\left(1+O_\beta\!\left(\frac{1}{\log(\frac{N}{a_{N,\beta}})}\right)\right),
\end{align*}
whence the equation for $\proba_\beta[X_N \geq a_{N,\beta}]$. It remains to get rid of the tilting parameter $h_N$ in the Laplace transform $\esper_\beta[\E^{h_NX_N-h_Na_{N,\beta}}]$.  Notice that the logarithm of this quantity is the opposite of the Legendre--Fenchel conjugate $(\Lambda_{N,\beta})^*(a_{N,\beta})$, where 
$$(\Lambda_{N,\beta})^*(a) = \sup_{h \in \R} (ah-\Lambda_{N,\beta}(h)).$$
Indeed the parameter $h_N$ which maximises the function $h \mapsto a_{N,\beta}h-\Lambda_{N,\beta}(h)$ is the solution of $a_{N,\beta} = \Lambda_{N,\beta}'(h_N)=\esper_{(\beta,\delta_N)}[X_N]$, so we recover Equation \eqref{eq:critical_parameter}. So the previous estimate rewrites as:
\begin{equation}
\proba_{\beta}[X_N \geq a_{N,\beta}] = \frac{\E^{-\Lambda_{N,\beta}^*(a_{N,\beta})}}{a_{N,\beta}}\sqrt{\frac{1}{2\pi\beta}\log\!\left(\frac{N}{a_{N,\beta}}\right)} \left(1+O_\beta\!\left(\frac{1}{\log(\frac{N}{a_{N,\beta}})}\right)\right).\label{eq:almost_there}
\end{equation}
for any balanced sequence $(a_{N,\beta})_{N \in \N}$. This is the first item in Theorem \ref{thm:super_moderate}. In the sequel of this subsection we distinguish between several subregimes in order to prove the other items. 

\begin{remark}
Equation \eqref{eq:almost_there} also holds if $\sqrt{\log N} \ll a_{N,\beta} \ll \log N$ (in the regime of moderate deviations, we have $\log N \lesssim a_{N,\beta}$). Indeed if $a_{N,\beta} \ll \log N$, then Lemma \ref{lem:regime_sequences} shows that $\lambda_N$ goes to $0$, and
\begin{align*}
a_N &= (N+2\lambda_N)\,\psi_0(N+2\lambda_N) - (N+\lambda_N)\, \psi_0(N+\lambda_N)+\lambda_N\,\psi_0(\lambda_N) - 2\lambda_N\,\psi_0(2\lambda_N) \\ 
&= \lambda_N\,(\log N + \gamma + 1) + O\!\left((\lambda_N)^2+\frac{1}{N}\right).
\end{align*}
Combining this estimate and the relations between $a_N$ and $a_{N,\beta}$, we see that $\frac{1}{\sqrt{\log N}} \ll \lambda_N \ll 1$. Therefore, $v_N$ and $v_{N,\beta}$ are of order $\log N$, and Proposition \ref{prop:control_circular_jacobi} leads to:
$$\esper_\beta[\E^{\I \xi (X_N-a_{N,\beta})}] = \exp\left(-\frac{v_{N,\beta}\,\xi^2}{2} +  O_\beta\!\left(\left(\frac{|\xi|}{\lambda_N}\right)^3\right)\right).$$
Lemma \ref{lem:control_zone} yields 
$$\dkol\!\left(\frac{X_N-a_{N,\beta}}{\sqrt{v_{N,\beta}}},\,\mathcal{N}_\R(0,1)\right) = O\!\left(\frac{1}{(h_N\,\sqrt{v_{N,\beta}})^3}\right),$$ and then we see that the proof of the general scheme of approximation works again, since $h_N\,\sqrt{v_{N,\beta}}$ goes to infinity.
\end{remark}

\subsubsection{Small moderate deviations: $\sqrt{\log N} \ll a_{N,\beta} \lesssim \log N$.} Let us explain how to recover Equations \eqref{eq:moderate_deviation_small_range_haar} and \eqref{eq:moderate_deviation_small_range_jacobi}. We fix a constant $C$ such that $a_{N,\beta} \leq C \log N$; in the remainder of this paragraph, our $O(\cdot)$'s are allowed to depend on $C$ and $\beta$. By the second item of Lemma \ref{lem:regime_sequences} the sequence $(\lambda_N)_{N \in \N}$ is then bounded from above. Let us then find an asymptotic expansion of $\lambda_N$ in terms of $a_{N,\beta}$. We expect $$\lambda_N = \frac{a_{N,\beta}}{\log N}\,(1+\eta_N),$$ with $\eta_N$ small and of order $\frac{1}{\log N}$. In the sequel we use freely the relation $\psi_0(1+z)=\psi_0(z)+\frac{1}{z}$ and the fact that $\psi_0$ is Lipschitz in any interval $[1,M]$. Notice first that
\begin{align*}
a_{N} &= (N+2\lambda_N)\,\psi_0(N+2\lambda_N) - (N+\lambda_N)\,\psi_0(N+\lambda_N) + \lambda_N \,\psi_0(\lambda_N) - 2\lambda_N\, \psi_0(2\lambda_N) \\
&= \lambda_N \left(\log N +  1 +  \psi_0(1+\lambda_N)  - 2\, \psi_0(1+2\lambda_N)\right) + O\!\left(\frac{1}{N}\right).
\end{align*}
Then using the computations from the proof of Proposition \ref{prop:mean_variance_jacobi} we get:
\begin{align*}
a_{N,\beta} &= \lambda_N \left(\log N +  1 + \psi_0(1+\lambda_N)  - 2\, \psi_0(1+2\lambda_N)\right) \\
&\quad+\psi_0(1+2\beta'\lambda_N)-\psi_0(1+\beta'\lambda_N)+\frac{\beta'+1}{2\beta'}\,(\psi_0(1+\lambda_N)-\psi_0(1+2\lambda_N))\\
&\quad + \frac{1-\beta'^2}{12\beta'}\,(G_{N,\beta})'(2\beta'\lambda_N)+ O\!\left(\frac{1}{N}\right).
\end{align*}
If we replace on the last line $(G_{N,\beta})'(z)$ by its limit
$$H_{\beta}(z) = \int_0^\infty \eta_\beta(s)\,(\E^{-\frac{sz}{2}}-\E^{-sz})\DD{s},$$
then our error is again a $O(\frac{1}{N})$, so this replacement is legit. We then replace $\lambda_N$ by $\frac{a_{N,\beta}}{\log N}(1+\eta_N)$ in the formula above. We obtain:
\begin{align*}
-\eta_N&=   \frac{1 + \psi_0(1+\lambda_N)  - 2\, \psi_0(1+2\lambda_N)}{\log N}+\frac{\beta'+1}{2\beta'a_{N,\beta}}\left(\psi_0\!\left(1+\frac{a_{N,\beta}}{\log N}\right)-\psi_0\!\left(1+\frac{2a_{N,\beta}}{\log N}\right)\right) \\ 
&\quad +\frac{1}{a_{N,\beta}}\left(\psi_0\!\left(1+\frac{2\beta'a_{N,\beta}}{\log N}\right)-\psi_0\!\left(1+\frac{\beta'a_{N,\beta}}{\log N}\right)\right)+ \frac{1-\beta'^2}{12\beta'a_{N,\beta}}\,H_\beta\!\left(\frac{2\beta'a_{N,\beta}}{\log N}\right)\\ 
&\quad + O\!\left(\frac{1}{N}+\frac{\eta_N}{\log N}\right).
\end{align*}
All the functions considered above are Lipschitz on their domain of analysis, so all the terms of this estimate are of order $\frac{1}{\log N}$, and the remainder of the asymptotic expansion above is a $O(\frac{1}{(\log N)^2})$.
This exact formula will not be important in the sequel, as we shall only use the fact that $\eta_N=O(\frac{1}{\log N})$ (with again a constant which depends on $C$ and $\beta$). By Proposition \ref{prop:laplace_estimate_beta},
$$\Lambda_{N,\beta}(h_N) = \log \Psi_\beta\!\left(\frac{2\beta' a_{N,\beta}}{\log N}(1+\eta_N)\right)  + \frac{\beta'(a_{N,\beta})^2}{\log N} (1+2\eta_N) + O\!\left(\frac{1}{\log N}\right) $$
and on the other hand
$$h_Na_{N,\beta} = \frac{\beta'(a_{N,\beta})^2}{\log N}(2+2\eta_N),$$
so in this regime,
$$-\Lambda_{N,\beta}^*(a_{N,\beta}) = \Lambda_{N,\beta}(h_N)-h_Na_{N,\beta} = \log \Psi_\beta\!\left(\frac{2\beta' a_{N,\beta}}{\log N}(1+\eta_N)\right) -\frac{\beta'(a_{N,\beta})^2}{\log N}+ O\!\left(\frac{1}{\log N}\right).$$
Finally it is clear from the definitions of $\Psi$ and $\Psi_\beta$ that $\log \Psi_\beta(\cdot)$ is Lipschitz on the domain that we consider, so we can remove the factor $1+\eta_N$ from its argument. We have therefore proved:
\begin{proposition}
In the regime $\sqrt{\log N}\ll a_N \lesssim \log N$, with an implied constant which depends on $\beta$ and on the upper bound on the ratio $\frac{a_N}{\log N}$, we have:
$$\E^{-\Lambda_{N,\beta}^{*}(a_{N,\beta})} = \E^{-\frac{\beta'(a_{N,\beta})^2}{\log N}}\,\Psi_\beta\!\left(\frac{2\beta'a_{N,\beta}}{\log N}\right)\,\left(1+O\!\left(\frac{1}{\log N}\right)\right).$$
\end{proposition}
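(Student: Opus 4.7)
The plan is to evaluate the Legendre--Fenchel conjugate directly through the identity
$$\Lambda_{N,\beta}^{*}(a_{N,\beta})=h_N\,a_{N,\beta}-\Lambda_{N,\beta}(h_N),$$
where $h_N=2\beta'\lambda_N$ is the unique critical point defined by $\Lambda_{N,\beta}'(h_N)=a_{N,\beta}$. Since $a_{N,\beta}\lesssim\log N$, Lemma \ref{lem:regime_sequences} ensures that $\lambda_N$ lies in a bounded interval $[0,C]$ (with $C$ depending on $\beta$ and on the constant that controls $a_{N,\beta}/\log N$), and on such an interval the shifted digamma $\psi_0(1+\cdot)$, the integral kernel $(G_{N,\beta})'$ of Lemma \ref{lem:laplace_method}, and $\log\Psi_\beta(2\beta'\cdot)$ are all Lipschitz.

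The first step would be to invert the implicit equation $a_{N,\beta}=\Lambda_{N,\beta}'(h_N)$ and pin down the size of the correction to the naive approximation $\lambda_N\approx\frac{a_{N,\beta}}{\log N}$. With the ansatz $\lambda_N=\frac{a_{N,\beta}}{\log N}(1+\eta_N)$, substitution into the exact expression for $\Lambda_{N,\beta}'(h_N)$ coming from Theorem \ref{thm:comparison} produces the leading $\lambda_N\log N$ plus a collection of bounded contributions (polygamma values at bounded arguments and the integral $(G_{N,\beta})'$); all of those transfer to $\eta_N$ only at the level $O(1/\log N)$.

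The second step is to insert this expansion into Proposition \ref{prop:laplace_estimate_beta}. In our bounded-$h_N$ regime the correction terms $-\frac{3h_N^2}{8\beta'}+\frac{N^2\beta'}{2}b(\eps_{h_N/\beta'})+\frac{N(\beta'-1)}{2}c(\eps_{h_N/\beta'})$ collapse to $O(1/N)$ thanks to the Taylor expansions $b(\eps)=\frac{3\eps^2}{4}+O(\eps^3)$ (whose leading part exactly offsets $-\frac{3h_N^2}{8\beta'}$) and $c(\eps)=O(\eps^2)$, leaving
$$\Lambda_{N,\beta}(h_N)=\log\Psi_\beta(h_N)+\frac{h_N^2\log N}{4\beta'}+O\!\left(\frac{1}{\log N}\right).$$
A direct computation using $h_N=2\beta'\lambda_N$ then yields
$$\frac{h_N^2\log N}{4\beta'}-h_N\,a_{N,\beta}=\frac{\beta'(a_{N,\beta})^2}{\log N}\bigl[(1+\eta_N)^2-2(1+\eta_N)\bigr]=-\frac{\beta'(a_{N,\beta})^2}{\log N}+\frac{\beta'(a_{N,\beta})^2\,\eta_N^2}{\log N},$$
whose last term is $O(1/\log N)$ because $\eta_N=O(1/\log N)$ and $a_{N,\beta}\lesssim\log N$. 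Finally, the Lipschitz property of $\log\Psi_\beta$ on a bounded interval allows replacing $\log\Psi_\beta(h_N)$ by $\log\Psi_\beta(2\beta'a_{N,\beta}/\log N)$ at an extra cost $O(\eta_N)=O(1/\log N)$. Exponentiation completes the proof.

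The main subtlety is the cancellation between $\frac{h_N^2\log N}{4\beta'}$ and $h_N\,a_{N,\beta}$: each separately carries a linear-in-$\eta_N$ error of size $(a_{N,\beta})^2\eta_N/\log N$, which can be as large as $O(1)$ near the upper end $a_{N,\beta}\asymp\log N$ and would destroy the claimed precision. It is crucial that these linear contributions cancel in the difference, so that only the quadratic-in-$\eta_N$ residue survives; tracking this, together with the analogous cancellation of $-\frac{3h_N^2}{8\beta'}$ inside Proposition \ref{prop:laplace_estimate_beta}, is the delicate bookkeeping that makes the argument work.
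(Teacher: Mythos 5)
Your proposal is correct and follows essentially the same route as the paper: write $\lambda_N=\frac{a_{N,\beta}}{\log N}(1+\eta_N)$, establish $\eta_N=O(1/\log N)$ via the Comparison Theorem and the boundedness of $\lambda_N$, feed this into Proposition \ref{prop:laplace_estimate_beta}, and exploit the exact cancellation of the $O(\eta_N)$ terms between $\frac{h_N^2\log N}{4\beta'}$ and $h_N a_{N,\beta}$ before invoking the Lipschitz property of $\log\Psi_\beta$. You are somewhat less explicit than the paper in extracting the precise form of $\eta_N$ (the paper records its $O(1/\log N)$-expansion term by term before discarding it), but your emphasis on the linear-in-$\eta_N$ cancellation — and on the offsetting of $-\frac{3h_N^2}{8\beta'}$ by the leading term of $b$ — is exactly the bookkeeping that makes the argument go through, and you identify it correctly.
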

\noindent This ends the proof of the second point of Theorem \ref{thm:super_moderate} and we can express $\Psi_\beta(\frac{\beta a_{N,\beta}}{\log N})$ in terms of $\Psi(\frac{2a_{N,\beta}}{\log N})$ by using the first part of Proposition \ref{prop:laplace_estimate_beta}.

\subsubsection{True moderate deviations: $\log N \ll a_{N,\beta} \ll N$.}  We now focus on the second subregime of moderate deviations, which is when $a_{N,\beta}$ is much larger than $\log N$ but much smaller than $N$. We start by the following remark on the previous case: since for balanced sequences we have 
$$\beta\,v_{N,\beta} \simeq \log \left(\frac{N}{a_{N,\beta}}\right)\quad \text{and}\quad h_N\simeq \frac{\beta\, a_{N,\beta}}{\log(\frac{N}{a_{N,\beta}})},$$ 
in the regime of small moderate deviations, the previous computations show that
$$\Lambda_{N,\beta}(h_N)-h_Na_{N,\beta} = \log\Psi_\beta(h_N) - \frac{v_{N,\beta}\,(h_N)^2}{2} + \text{small remainder} .$$
This leads one to try to compare in the general case $-\Lambda_{N,\beta}^*(a_{N,\beta})$ and $\log \Psi_\beta(h_N)-\frac{v_{N,\beta}\,(h_N)^2}{2}$.

\begin{lemma}\label{lem:estimate_legendre}
Fix $\beta>0$, and consider a balanced sequence $(a_{N,\beta})_{N \in \N}$. Recall that $\eps_{\lambda_N}=\frac{\lambda_N}{N} = \frac{h_N}{\beta N}$. Then
\begin{align*}
&-\Lambda_{N,\beta}^*(a_{N,\beta})-\log \Psi_\beta(h_N) +\frac{v_{N,\beta} (h_N)^2}{2} \\
&=\frac{(h_N)^2}{4\beta'}\, \log \!\left(\frac{2h_N}{\beta'}\right)-\frac{3(h_N)^2}{8\beta'} -\frac{N^2\beta'-N(\beta'-1)}{2} \log\!\left(1+\frac{(\eps_{\lambda_N})^2}{1+2\eps_{\lambda_N}}\right)\\
&\quad -N(\beta'-1)\left(\eps_{\lambda_N}\,\log 2+\frac{(\eps_{\lambda_N})^2}{2(1+\eps_{\lambda_N})(1+2\eps_{\lambda_N})} \right) +\frac{3\beta'-1-\beta'^2}{8\beta'} + r_N,
\end{align*}
where the remainder $r_N$ is a $O_\beta((\eps_{\lambda_N})^2 + \frac{1}{h_N})$.
\end{lemma}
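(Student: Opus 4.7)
The starting point is to rewrite the left-hand side as
$$-\Lambda_{N,\beta}^*(a_{N,\beta})-\log \Psi_\beta(h_N) +\frac{v_{N,\beta} (h_N)^2}{2} = \bigl[\Lambda_{N,\beta}(h_N) - \log\Psi_\beta(h_N)\bigr] - h_N a_{N,\beta} + \frac{(h_N)^2}{2}\,v_{N,\beta},$$
using the fact that $h_N$ is chosen so that $\Lambda_{N,\beta}'(h_N)=a_{N,\beta}$, which gives $-\Lambda_{N,\beta}^*(a_{N,\beta})=\Lambda_{N,\beta}(h_N)-h_Na_{N,\beta}$. I would then substitute: for the first bracket, Proposition \ref{prop:laplace_estimate_beta}, which produces the summands $\tfrac{(h_N)^2\log N}{4\beta'}$, $-\tfrac{3(h_N)^2}{8\beta'}$, $\tfrac{N^2\beta'}{2}\,b(2\eps_{\lambda_N})$ and $\tfrac{N(\beta'-1)}{2}\,c(2\eps_{\lambda_N})$ with a remainder $O_\beta(\tfrac{1}{N}+\eps_{\lambda_N}^2)$; for $a_{N,\beta}$ and $v_{N,\beta}$, Propositions \ref{prop:mean_variance_beta2} and \ref{prop:mean_variance_jacobi}, rewritten in the variable $\lambda_N=h_N/(2\beta')$. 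Everything is then expressed in terms of $h_N$, $\lambda_N$ and $\eps_{\lambda_N}=\lambda_N/N$, and the task becomes purely algebraic.

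The cancellations unfold as follows. First, all three terms produce a $\log N$ contribution: $+\tfrac{(h_N)^2\log N}{4\beta'}$ from $\Lambda_{N,\beta}$, $-\tfrac{(h_N)^2}{2\beta'}\log(N/(4\lambda_N))$ from $-h_N a_{N,\beta}$ (via the $\lambda_N\log(N/(4\lambda_N))$ term in $a_N$), and $+\tfrac{(h_N)^2}{4\beta'}\log(N/(4\lambda_N))$ from $\tfrac{(h_N)^2}{2}v_{N,\beta}$. Their sum reduces via $\log N -\log(N/(4\lambda_N))=\log(4\lambda_N)=\log(2h_N/\beta')$ to the announced leading term $\tfrac{(h_N)^2}{4\beta'}\log(2h_N/\beta')$. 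Second, the key identity is the algebraic simplification for the $\log(1+2\eps_{\lambda_N})$ and $\log(1+\eps_{\lambda_N})$ prefactors: combining $\tfrac{N^2\beta'}{2}b(2\eps_{\lambda_N})$ with the $(1+2\eps_{\lambda_N})\log(1+2\eps_{\lambda_N})-(1+\eps_{\lambda_N})\log(1+\eps_{\lambda_N})$ piece of $-h_N a_{N,\beta}$ uses the factorisations
$$\tfrac{\beta'}{2}(N+2\lambda_N)^2-2\beta'\lambda_N(N+2\lambda_N)=\tfrac{\beta'}{2}(N^2-4\lambda_N^2),\quad -\beta'(N+\lambda_N)^2+2\beta'\lambda_N(N+\lambda_N)=-\beta'(N^2-\lambda_N^2),$$
then $(1+\eps)^2/(1+2\eps)=1+\eps^2/(1+2\eps)$, producing $-\tfrac{N^2\beta'}{2}\log(1+\eps_{\lambda_N}^2/(1+2\eps_{\lambda_N}))$ plus surplus $(h_N)^2$-multiples of $\log(1\pm\eps_{\lambda_N})$, which are exactly cancelled by the $\log(1+2\eps_{\lambda_N})-\tfrac12\log(1+\eps_{\lambda_N})$ piece of $\tfrac{(h_N)^2}{2}v_{N,\beta}$. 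Third, combining $\tfrac{N(\beta'-1)}{2}\,c(2\eps_{\lambda_N})$ with the $(\beta'-1)$-correction of $a_{N,\beta}$ yields $\tfrac{N(\beta'-1)}{2}\log(1+\eps_{\lambda_N}^2/(1+2\eps_{\lambda_N}))-N(\beta'-1)\eps_{\lambda_N}\log 2$ after the same $\log((1+\eps)^2/(1+2\eps))$ identity. Fourth, the partial fraction $\tfrac{1}{2(N+\lambda_N)}-\tfrac{1}{N+2\lambda_N}=-\tfrac{N}{2(N+\lambda_N)(N+2\lambda_N)}$ applied to the $(\beta'-1)$-correction in $v_{N,\beta}$ gives precisely $-\tfrac{N(\beta'-1)\eps_{\lambda_N}^2}{2(1+\eps_{\lambda_N})(1+2\eps_{\lambda_N})}$.

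Finally, the contributions from the $1/\lambda_N,\,1/(N+\lambda_N),\,1/(N+2\lambda_N)$ terms of Propositions \ref{prop:mean_variance_beta2} and \ref{prop:mean_variance_jacobi} (multiplied by $-h_N$ or $(h_N)^2/2$ as appropriate) produce only constants and $O_\beta(\eps_{\lambda_N})$ perturbations; the constants accumulate to
$$\tfrac{\beta'}{8}-\tfrac{(1-\beta')(1-2\beta')}{12\beta'}-\tfrac{(1-\beta')(1-2\beta')}{24\beta'}=\tfrac{\beta'^2-(1-\beta')(1-2\beta')}{8\beta'}=\tfrac{3\beta'-1-\beta'^2}{8\beta'},$$
matching the stated constant, while every leftover $O_\beta(\eps_{\lambda_N})$ combination is seen, after clearing denominators, to cancel to order $\eps_{\lambda_N}^2$ (for instance $-\tfrac{\beta'\eps_{\lambda_N}}{6(1+\eps_{\lambda_N})}+\tfrac{\beta'\eps_{\lambda_N}}{6(1+2\eps_{\lambda_N})}=O_\beta(\eps_{\lambda_N}^2)$). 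The main obstacle will simply be the bookkeeping of this substitution and regrouping; the only genuine ingredient beyond routine algebra is remembering to track the three different remainders $O_\beta(1/N+\eps_{\lambda_N}^2)$ from Proposition \ref{prop:laplace_estimate_beta}, $O_\beta(1/\lambda_N^2)$ from the $a_{N,\beta}$ expansion multiplied by $h_N$, and $O_\beta(1/\lambda_N^3)$ from the $v_{N,\beta}$ expansion multiplied by $(h_N)^2$; in the balanced regime $1\lesssim\lambda_N\ll N$, each of these is $O_\beta(\eps_{\lambda_N}^2+1/h_N)$, yielding the claimed remainder $r_N$.
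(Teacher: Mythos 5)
Your proposal is correct and follows essentially the same route as the paper: the paper's proof writes $-\Lambda_{N,\beta}^*(a_{N,\beta})=\Lambda_{N,\beta}(h_N)-h_Na_{N,\beta}$, expands each of $\Lambda_{N,\beta}(h_N)$, $-h_Na_{N,\beta}$ and $\tfrac{v_{N,\beta}(h_N)^2}{2}$ using Propositions~\ref{prop:laplace_estimate_beta}, \ref{prop:mean_variance_beta2} and~\ref{prop:mean_variance_jacobi} with $\ell_1=\log(1+\eps_{\lambda_N})$, $\ell_2=\log(1+2\eps_{\lambda_N})$, and sums. Your detailed tracking of the $\log N$ cancellation, the $(N\pm\lambda_N)^2$ factorisations yielding $\log\bigl(1+\tfrac{\eps_{\lambda_N}^2}{1+2\eps_{\lambda_N}}\bigr)$, the constant $\tfrac{3\beta'-1-\beta'^2}{8\beta'}$, and the three error sources all check out and merely make explicit the algebra the paper leaves implicit.
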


\begin{proof}
By Propositions \ref{prop:laplace_estimate_beta} and \ref{prop:mean_variance_jacobi},
\begin{align*}
\Lambda_{N,\beta}(h_N)&\simeq \log \Psi_\beta(h_N) + \frac{(h_N)^2\log N}{4\beta'} -\frac{3(h_N)^2}{8\beta'} + N^2\beta' \left(\frac{(1+2\eps_{\lambda_N})^2}{2} \,\ell_2-(1+\eps_{\lambda_N})^2\,\ell_1\right)\\
&\quad +N(\beta'-1)\left((1+\eps_{\lambda_N})\,\ell_1 -\frac{(1+2\eps_{\lambda_N})}{2}\ell_2\right);\end{align*}
\begin{align*}
-h_Na_{N,\beta}&\simeq -\frac{(h_N)^2}{2\beta'} \log \!\left(\frac{N\beta'}{2h_N}\right) + N^2\beta'\!\left( (2\eps_{\lambda_N}+2(\eps_{\lambda_N})^2)\,\ell_1-(2\eps_{\lambda_N}+4(\eps_{\lambda_N})^2)\,\ell_2\right) \\
&\quad +  N(\beta'-1)\,\eps_{\lambda_N} \,(\ell_2 -\ell_1 -\log 2) +\frac{3\beta'-1-\beta'^2}{12\beta'}; \\
\frac{v_{N,\beta}\, (h_N)^2}{2}&\simeq \frac{(h_N)^2}{4\beta'}\log\!\left(\frac{N\beta'}{2h_N}\right) 
+ N^2 \beta' \left(2(\eps_{\lambda_N})^2\,\ell_2 - (\eps_{\lambda_N})^2\ell_1\right)+ \frac{3\beta'-1-\beta'^2}{24\beta'} \\
&\quad-N(\beta'-1)\left(\frac{(\eps_{\lambda_N})^2}{2(1+\eps_{\lambda_N})(1+2\eps_{\lambda_N})} \right),
\end{align*} 
where $\ell_2=\log(1+2\eps_{\lambda_N})$, $\ell_1=\log(1+\eps_{\lambda_N})$, and the symbol $\simeq$ means that the two terms of the identity differ by a remainder $r_N$. We conclude by taking the sum of these three equations.
\end{proof}
\medskip

If $a_{N,\beta}\gg \log N$, then $\lambda_N$ goes to infinity by Lemma \ref{lem:regime_sequences} and we can use the second part of Proposition \ref{prop:laplace_estimate_beta} in order to replace in the formula above the quantity $\log \Psi_\beta(h_N)$ by an asymptotic equivalent:
\begin{align}
-\Lambda_{N,\beta}^*(a_{N,\beta}) &= -\frac{v_{N,\beta}\, (h_N)^2}{2} + \frac{\beta'^2-3\beta'+1}{12\beta'}\,\log h_N - \frac{N^2\beta'}{2} \log\!\left(1+\frac{(\eps_{\lambda_N})^2}{1+2\eps_{\lambda_N}}\right) \nonumber \\
&\quad + \frac{N(\beta'-1)}{2}\left(\log\!\left(1+\frac{(\eps_{\lambda_N})^2}{1+2\eps_{\lambda_N}}\right) -\frac{(\eps_{\lambda_N})^2}{(1+\eps_{\lambda_N})(1+2\eps_{\lambda_N})}\right) \nonumber \\
& \quad +B_\beta + O_\beta\!\left((\eps_{\lambda_N})^2+\frac{1}{h_N}\right), \label{eq:legendre_true_moderate}
\end{align}
where $B_\beta$ is the constant equal to
$$B_\beta = \frac{1-\beta'^2}{12\beta'} A_\beta + \frac{3\beta'-1-\beta'^2}{8\beta'} +\frac{3-\beta'}{12} \log 2 - \frac{3+2\beta'}{12}\log \beta' + \frac{\beta'-1}{4}\log \pi + \beta'\zeta'(-1) .$$
Next we have to replace each occurrence of $h_N$ by an adequate function of $a_{N,\beta}$. We therefore need to reverse the estimate of the mean from Proposition \ref{prop:mean_variance_jacobi}. This operation involves the map $\theta_{N,\beta}$ introduced at the beginning of Subsection \ref{subsec:main_results}.

\begin{lemma}\label{lem:modification_theta}
If the parameter $\beta>0$ is fixed, then for $N$ large enough, $\theta_{N,\beta}$ is a continuous increasing bijection between $\R_+$ and $[\frac{\beta'-1}{2\beta'N} \log 2,\log 2)$.
\end{lemma}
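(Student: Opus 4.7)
The plan is to prove strict monotonicity by computing $\theta_{N,\beta}'$ explicitly and controlling a single correction term, then identify the image by computing the boundary values at $0^+$ and at $+\infty$.

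First I would differentiate. A direct calculation gives $\theta'(x)=\log\bigl(1+\frac{1}{4x(1+x)}\bigr)=\log\frac{(1+2x)^2}{4x(1+x)}$, which is strictly positive for all $x>0$. On the other hand $\frac{d}{dx}[\log(1+x)-\log(1+2x)]=-\frac{1}{(1+x)(1+2x)}$, hence
$$\theta_{N,\beta}'(x)=\log\!\left(1+\frac{1}{4x(1+x)}\right)-\frac{\beta'-1}{2\beta'N(1+x)(1+2x)}.$$
When $\beta'\le 1$, the second term is nonnegative (since $\beta'-1\le 0$ and it is being subtracted), so $\theta_{N,\beta}'(x)>0$ for every $x>0$ and every $N\ge 1$. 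When $\beta'>1$, I would rewrite positivity as $\beta'N\, h(x)>\beta'-1$, where $h(x)=2(1+x)(1+2x)\log\bigl(1+\frac{1}{4x(1+x)}\bigr)$. The function $h$ is continuous and strictly positive on $(0,\infty)$; as $x\to 0^+$ one has $h(x)\sim 2\lvert\log(4x)\rvert\to+\infty$, and expanding $\log(1+u)=u+O(u^2)$ at $u=\frac{1}{4x(1+x)}$ shows $h(x)\to 1$ as $x\to+\infty$. Therefore $c_\beta:=\inf_{x>0}h(x)>0$, and it suffices to take $N$ larger than $(\beta'-1)/(\beta'c_\beta)$ to obtain $\theta_{N,\beta}'>0$ on $(0,\infty)$.

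Next I would compute the boundary values. Since $(1+2x)\log(1+2x)$, $(1+x)\log(1+x)$ and $x\log(4x)$ all tend to $0$ as $x\to 0^+$, one gets $\theta(0^+)=0$, while $\log 2+\log(1+x)-\log(1+2x)\to\log 2$, so $\theta_{N,\beta}(0^+)=\frac{\beta'-1}{2\beta'N}\log 2$. At infinity, a routine expansion of the three logarithms in $\theta$ collects the $x\log x$ and $x\log 2$ contributions and yields $\theta(x)=\log 2+O(1/x)$, while $\log\frac{2(1+x)}{1+2x}\to 0$; hence $\theta_{N,\beta}(x)\to\log 2$ as $x\to+\infty$.

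Combining strict monotonicity with continuity of $\theta_{N,\beta}$ on $[0,\infty)$ and the intermediate value theorem gives the claimed bijection onto $[\tfrac{\beta'-1}{2\beta'N}\log 2,\log 2)$. The only real obstacle is the estimation of $\inf h$ in the regime $\beta'>1$, but this reduces to the elementary observation that $h$ is continuous and positive on $(0,\infty)$ with finite positive limit at $+\infty$ and limit $+\infty$ at $0^+$, so it is bounded below by a positive constant depending only on $\beta$.
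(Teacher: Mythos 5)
Your proof is correct, and it follows the same overall strategy as the paper: compute $\theta_{N,\beta}'$ explicitly, establish its positivity for $N$ large enough, and evaluate the endpoints at $0^+$ and $+\infty$. The only real difference is how the positivity is packaged. You clear the denominator, introduce $h(x)=2(1+x)(1+2x)\log\!\left(1+\tfrac{1}{4x(1+x)}\right)$, and argue $\inf_{x>0}h>0$ from continuity together with the limits $h(0^+)=+\infty$ and $h(+\infty)=1$ (both of which you compute correctly). The paper instead splits at $x=\tfrac{1}{2}$: on $(0,\tfrac{1}{2}]$ the logarithm is bounded below by $\log\tfrac{4}{3}$ while the correction is $O(1/N)$, and on $[\tfrac{1}{2},\infty)$ concavity of the logarithm gives $\log\!\left(1+\tfrac{1}{4x(1+x)}\right)\ge\tfrac{c}{4x(1+x)}\ge\tfrac{c}{2(1+x)(1+2x)}$, which dominates the correction. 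The two arguments express the same dominance estimate; yours is a little slicker since it avoids the explicit case split, while the paper's gives a fully elementary, self-contained bound. Your preliminary observation that for $\beta'\le 1$ the correction term has a favourable sign, so that $\theta_{N,\beta}'>0$ for \emph{every} $N\ge 1$, is a small refinement not stated in the paper.
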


\begin{proof}
Let us prove that
$$\theta_{N,\beta}'(x)=\log\!\left(1+\frac{1}{4x(1+x)}\right)-\frac{\beta'-1}{2\beta'N(1+x)(1+2x)}$$
is strictly postive for any $x\in \R_+$ if $N$ is large enough; this will imply the result since the limits of $\theta_{N,\beta}$ when $x$ goes to $0$ and $+\infty$ are respectively $\frac{\beta'-1}{2\beta'N} \log 2$ and $\log 2$. For $x$ smaller than $\frac{1}{2}$, the logarithm is larger than $\log(\frac{4}{3})$, to which is subtracted a quantity smaller than $|\frac{\beta'-1}{2\beta'N}|$, so $\theta_{N,\beta}'(x)$ stays positive if $N$ is large enough. On the other hand by concavity of the logarithm, for $x$ larger than $\frac{1}{2}$, there is a positive constant $c$ such that
$$\log\!\left(1+\frac{1}{4x(1+x)}\right)\geq \frac{c}{4x(1+x)}\geq \frac{c}{2(1+x)(1+2x)},$$
which is again larger than $|\frac{\beta'-1}{2\beta'N(1+x)(1+2x)}|$ for $N$ large enough (depending on $\beta$, but not on $x$). 
\end{proof}\medskip

Denote $(\theta_{N,\beta})^{-1}$ the inverse function of $\theta_{N,\beta}$. We have
$$\theta_{N,\beta}(x) = x\,|\log x| +O_\beta\!\left(\frac{1}{N}+x\right),$$
so if $\frac{1}{N}\ll x \ll 1$, then for the same reasons as for the map $\theta$, we have the asymptotic equivalent
$$(\theta_{N,\beta})^{-1}(x) = \frac{x}{|\log x|}\,(1+o(1)).$$

\begin{lemma}
In the regime $\log N \ll a_{N,\beta} \ll N$, we have
$$\lambda_N =  N\, (\theta_{N,\beta})^{-1}\!\left(\frac{a_{N,\beta}}{N}\right) + O_\beta\!\left(\frac{1}{a_{N,\beta}}\right),$$
\end{lemma}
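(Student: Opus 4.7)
The plan is to combine the asymptotic expansions already at our disposal to show that
$$a_{N,\beta} = N\,\theta_{N,\beta}\!\left(\frac{\lambda_N}{N}\right) + O_\beta\!\left(\frac{1}{\lambda_N}\right),$$
and then invert this relation using the mean value theorem, controlling $\theta_{N,\beta}'$ from below by $\log(N/\lambda_N)$.

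First, I would unpack Equation \eqref{eq:mean_huapickrell} and use the identity
$$N\,\theta(\eps_{\lambda_N}) = \lambda_N\log\!\left(\frac{N}{4\lambda_N}\right) + N\bigl((1+2\eps_{\lambda_N})\log(1+2\eps_{\lambda_N}) - (1+\eps_{\lambda_N})\log(1+\eps_{\lambda_N})\bigr)$$
to rewrite
$$a_N = N\,\theta(\eps_{\lambda_N}) + \frac{1}{12}\!\left(\frac{1}{N+\lambda_N} - \frac{1}{2\lambda_N} - \frac{1}{N+2\lambda_N}\right) + O\!\left(\frac{1}{(\lambda_N)^3}\right).$$
Adding the correction from Proposition \ref{prop:mean_variance_jacobi}, whose main piece is precisely the logarithmic term $\frac{\beta'-1}{2\beta'}(\log 2 + \log(1+\eps_{\lambda_N}) - \log(1+2\eps_{\lambda_N}))$ that distinguishes $\theta_{N,\beta}$ from $\theta$ via the very definition of $\theta_{N,\beta}$, I obtain
$$a_{N,\beta} = N\,\theta_{N,\beta}(\eps_{\lambda_N}) + R_N,\qquad R_N = O_\beta\!\left(\frac{1}{\lambda_N}\right),$$
where $R_N$ collects the $\frac{1}{\lambda_N}$-size terms coming from both the Haar-ensemble remainder and from the polygamma tails of the Comparison Theorem.

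Next I would invert this relation. Since $\lambda_N/N$ tends to $0$ while $\lambda_N\to+\infty$, the derivative
$$\theta_{N,\beta}'(x) = \log\!\left(1+\frac{1}{4x(1+x)}\right) - \frac{\beta'-1}{2\beta'N(1+x)(1+2x)}$$
is positive (Lemma \ref{lem:modification_theta}) and at the point $x=\eps_{\lambda_N}$ satisfies $\theta_{N,\beta}'(\eps_{\lambda_N}) = \log(N/\lambda_N) + O_\beta(1)$; the same lower bound holds uniformly between $\eps_{\lambda_N}$ and $(\theta_{N,\beta})^{-1}(a_{N,\beta}/N)$. Writing $a_{N,\beta}/N = \theta_{N,\beta}(\lambda_N/N) + R_N/N$ and applying the mean value theorem to $(\theta_{N,\beta})^{-1}$ yields
$$\lambda_N - N\,(\theta_{N,\beta})^{-1}\!\left(\frac{a_{N,\beta}}{N}\right) = O_\beta\!\left(\frac{R_N}{\log(N/\lambda_N)}\right)= O_\beta\!\left(\frac{1}{\lambda_N\,\log(N/\lambda_N)}\right).$$

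Finally I would convert the right-hand side to the announced form by invoking the moderate-deviation asymptotics $a_{N,\beta}\sim \lambda_N\log(N/\lambda_N)$ (a direct consequence of Propositions \ref{prop:moderate_regime} and \ref{prop:log_N_aN}, or equivalently of $N\theta(\eps_{\lambda_N})\sim \lambda_N\log(N/\lambda_N)$): this makes the error $O_\beta(1/a_{N,\beta})$. The main work is therefore purely bookkeeping; the only mild subtlety is checking that $\theta_{N,\beta}'$ stays bounded below by a positive multiple of $\log(N/\lambda_N)$ along the segment used in the mean value theorem, so that the inversion does not inflate the remainder.
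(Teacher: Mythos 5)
Your argument follows the paper's own proof essentially step for step: write $a_{N,\beta}/N = \theta_{N,\beta}(\lambda_N/N) + O_\beta(1/(N\lambda_N))$ from Equation~\eqref{eq:mean_huapickrell} and Proposition~\ref{prop:mean_variance_jacobi}, invert via the (mean value theorem version of the) derivative of $(\theta_{N,\beta})^{-1}$, bound that derivative by $1/\log(N/\lambda_N)$, and replace $\lambda_N\log(N/\lambda_N)$ by its equivalent $a_{N,\beta}$. The only cosmetic difference is that the paper phrases the inversion via $(\theta_{N,\beta}^{-1})'(a_{N,\beta}/N) = 1/\theta_{N,\beta}'(\lambda_N/N)$ rather than an explicit intermediate point, but this is the same estimate.
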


\begin{proof} 
 Our usual estimates of the mean $a_{N,\beta}$ (Proposition \ref{prop:mean_variance_jacobi} and Equation \eqref{eq:mean_huapickrell}) can be rewritten as:
$$\frac{a_{N,\beta}}{N} = \theta_{N,\beta}\left(\frac{\lambda_N}{N}\right) + O_\beta\!\left(\frac{1}{N\lambda_N}\right).$$
Since $\frac{a_{N,\beta}}{N}$ is much larger than $|\frac{\beta'-1}{2\beta'N} \log 2|$, we can apply the inverse of $\theta_{N,\beta}$ to this identity. We get
\begin{align*}
\frac{\lambda_N}{N} &= (\theta_{N,\beta})^{-1}\!\left(\frac{a_{N,\beta}}{N}\right) + O_\beta\!\left(\frac{|(\theta_{N,\beta}^{-1})'(\frac{a_{N,\beta}}{N})|}{N\lambda_N}\right) \\ 
&= (\theta_{N,\beta})^{-1}\!\left(\frac{a_{N,\beta}}{N}\right) + O_\beta\!\left(\frac{1}{|\theta_{N,\beta}'(\frac{\lambda_N}{N})|\,N\lambda_N}\right).
\end{align*}
However, for $x$ small we have $(\theta_{N,\beta})'(x)=|\log x|+O(1)$, so
$$\lambda_N =  N\,(\theta_{N,\beta})^{-1}\!\left(\frac{a_{N,\beta}}{N}\right) + O_\beta\!\left(\frac{1}{\lambda_N \,\log (\frac{N}{\lambda_N})}\right).$$
Finally, in the regime of moderate deviations, $a_{N,\beta}\simeq N\,\theta(\frac{\lambda_N}{N}) \simeq \lambda_N\,\log(\frac{N}{\lambda_N})$, so the remainder is a $O_\beta(\frac{1}{a_{N,\beta}})$.
\end{proof}
\medskip

We can make the previous estimate more precise and give the term of order $\frac{1}{a_{N,\beta}}$ in the asymptotic expansion:
\begin{proposition}
In the regime $\log N \ll a_{N,\beta} \ll N$, we have
$$\lambda_N =  N (\theta_{N,\beta})^{-1}\!\left(\frac{a_{N,\beta}}{N}\right) + \frac{3\beta'-1-\beta'^2}{24\beta'^2\,a_{N,\beta}}+ o\!\left(\frac{1}{a_{N,\beta}}\right).$$
\end{proposition}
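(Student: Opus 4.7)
The plan is to refine the preceding lemma by keeping the next term of order $1/\lambda_N$ in the expansion of $a_{N,\beta}-N\,\theta_{N,\beta}(\lambda_N/N)$, and then to invert this relation via a first-order Taylor expansion of $\theta_{N,\beta}$ around $\mu_N:=N\,(\theta_{N,\beta})^{-1}(a_{N,\beta}/N)$.

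\emph{Step 1 (refined expansion of the mean).} Combining Equation \eqref{eq:mean_huapickrell} (with $\delta_N$ replaced by $\lambda_N$) and Proposition \ref{prop:mean_variance_jacobi}, and using the definition of $\theta_{N,\beta}$, I would obtain in the regime $1\ll\lambda_N\ll N$ the refined estimate
$$a_{N,\beta}=N\,\theta_{N,\beta}\!\left(\frac{\lambda_N}{N}\right)+\frac{(\beta')^2-3\beta'+1}{24(\beta')^2\,\lambda_N}+O_\beta\!\left(\frac{1}{N}+\frac{1}{(\lambda_N)^2}\right).$$
The contributions involving $1/(N+\lambda_N)$ and $1/(N+2\lambda_N)$ from both formulas are absorbed in the $O_\beta(1/N)$; the only terms of order $1/\lambda_N$ are $-1/(24\lambda_N)$ from \eqref{eq:mean_huapickrell} and $(1-\beta')(1-2\beta')/(24(\beta')^2\lambda_N)$ from Proposition \ref{prop:mean_variance_jacobi}, and the identity $(1-\beta')(1-2\beta')-(\beta')^2=(\beta')^2-3\beta'+1$ yields the stated coefficient.

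\emph{Step 2 (inversion via Taylor expansion).} Writing $\lambda_N=\mu_N+\tilde\delta_N$ with $\tilde\delta_N=O_\beta(1/a_{N,\beta})$ (by the preceding lemma) and expanding $\theta_{N,\beta}$ to first order around $\mu_N/N$, I get
$$N\,\theta_{N,\beta}\!\left(\frac{\lambda_N}{N}\right)=a_{N,\beta}+\tilde\delta_N\,\theta_{N,\beta}'\!\left(\frac{\mu_N}{N}\right)+O\!\left(\frac{(\tilde\delta_N)^2}{N}\,\Big|\theta_{N,\beta}''\!\left(\frac{\mu_N}{N}\right)\Big|\right),$$
since $N\,\theta_{N,\beta}(\mu_N/N)=a_{N,\beta}$ by the definition of $\mu_N$. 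Matching with Step 1 isolates $\tilde\delta_N\,\theta_{N,\beta}'(\mu_N/N)$ up to controlled remainders.

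\emph{Step 3 (conversion and error control).} The key observation is that $\theta(x)\sim x\,|\log x|$ and $\theta'(x)\sim|\log x|$ as $x\to 0^+$, so $x\,\theta'(x)/\theta(x)\to 1$; the same holds for $\theta_{N,\beta}$ by the uniform convergence $\theta_{N,\beta}\to\theta$. The preceding lemma gives $\mu_N=\lambda_N(1+o(1))$ with $1\ll\lambda_N\ll N$, hence $\mu_N\,\theta_{N,\beta}'(\mu_N/N)=a_{N,\beta}(1+o(1))$, and dividing the identity of Step~2 by $\theta_{N,\beta}'(\mu_N/N)$ produces the announced coefficient $(3\beta'-1-(\beta')^2)/(24(\beta')^2\,a_{N,\beta})$. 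It remains to check that the auxiliary remainders are $o(1/a_{N,\beta})$: using $\theta_{N,\beta}'(\mu_N/N)\sim\log(N/\lambda_N)$, the $O_\beta(1/(\lambda_N)^2)$ becomes $O_\beta(1/(\lambda_N\,a_{N,\beta}))=o(1/a_{N,\beta})$ since $\lambda_N\to\infty$; the $O_\beta(1/N)$ becomes $O_\beta(1/(N\log(N/\lambda_N)))=o(1/a_{N,\beta})$ since $a_{N,\beta}\ll N\log(N/\lambda_N)$; and the quadratic Taylor term is handled analogously using $|\theta_{N,\beta}''(\mu_N/N)|=O(N/\lambda_N)$. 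The main obstacle will be this final bookkeeping: each remainder must be verified to be genuinely $o(1/a_{N,\beta})$ rather than merely $O(1/a_{N,\beta})$, a distinction which relies crucially on both $\log N\ll a_{N,\beta}$ (to ensure $\lambda_N\to\infty$) and $a_{N,\beta}\ll N$ (to ensure $\lambda_N\ll N$).
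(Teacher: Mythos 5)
Your proposal is correct and follows essentially the same route as the paper: extract the next-order term $\frac{(\beta')^2-3\beta'+1}{24(\beta')^2\lambda_N}$ from Equation \eqref{eq:mean_huapickrell} together with Proposition \ref{prop:mean_variance_jacobi}, then invert and control the first- and second-order Taylor remainders using $\theta'(x)\sim|\log x|$, $\theta''(x)\sim -1/x$, and $a_{N,\beta}\sim\lambda_N\log(N/\lambda_N)$. The only cosmetic difference is that you Taylor-expand $\theta_{N,\beta}$ around $\mu_N/N$ and divide, whereas the paper Taylor-expands the inverse $\theta_{N,\beta}^{-1}$ directly; these are equivalent, and your error bookkeeping (checking each remainder is genuinely $o(1/a_{N,\beta})$ using both $\lambda_N\to\infty$ and $\lambda_N\ll N$) matches the paper's.
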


\begin{proof}
A more precise version of the estimate of $a_{N,\beta}$ is:
$$\theta_{N,\beta}\!\left(\frac{\lambda_N}{N}\right) = \frac{a_{N,\beta}}{N} + \frac{3\beta'-1-\beta'^2}{24\beta'^2 N\lambda_N} + O\!\left(\frac{1}{N(\lambda_N)^2} + \frac{\lambda_N}{N^3}\right).$$
If we invert this relation we get:
\begin{align*}
\frac{\lambda_N}{N} &= (\theta_{N,\beta})^{-1}\!\left(\frac{a_{N,\beta}}{N}\right) + \frac{1}{\theta_{N,\beta}'((\theta_{N,\beta})^{-1}(\frac{a_{N,\beta}}{N}))}\left(\frac{3\beta'-1-\beta'^2}{24\beta'^2 N\lambda_N} + O\!\left(\frac{1}{N(\lambda_N)^2} + \frac{\lambda_N}{N^3}\right)\right)\\
&\quad +O\!\left(\frac{|((\theta_{N,\beta})^{-1})''(\frac{a_{N,\beta}}{N})|}{(N\lambda_N)^2}\right).
\end{align*}
For $x$ small, $\theta''(x) = -\frac{1}{x} + O(1)$, and the same estimate holds for $\theta_{N,\beta}''$. Therefore
$$((\theta_{N,\beta})^{-1})''\left(\frac{a_{N,\beta}}{N}\right) = -\frac{\theta_{N,\beta}''((\theta_{N,\beta})^{-1}(\frac{a_{N,\beta}}{N}))}{(\theta_{N,\beta}'((\theta_{N,\beta})^{-1}(\frac{a_{N,\beta}}{N})))^3} = O\!\left(\frac{1}{\frac{\lambda_N}{N}\,(\log (\frac{N}{\lambda_N}))^3}\right).$$
Thus, the remainder on the second line of the estimate of $\frac{\lambda_N}{N}$ is of order smaller than
 $$\frac{1}{N\,(\lambda_N\,\log(\frac{N}{\lambda_N}))^3} \lesssim \frac{1}{N\,(a_{N,\beta})^3}.$$
The other remainder is of order smaller than
$$\frac{1}{N(\lambda_N)^2\,\log (\frac{N}{\lambda_N})} + \frac{\lambda_N}{N^3 \log (\frac{N}{\lambda_N})} \lesssim \frac{\log (\frac{N}{\lambda_N})}{N\,(a_{N,\beta})^2} + \frac{(\frac{\lambda_N}{N})^2}{N\,a_{N,\beta}} .$$
In the regime $a_{N,\beta} \gg \log N$, $\frac{\log (\frac{N}{\lambda_N})}{(a_{N,\beta})^2} \leq \frac{\log N}{(a_{N,\beta})^2} \ll \frac{1}{a_{N,\beta}}$, so by gathering all the remainders we obtain:
$$\frac{\lambda_N}{N} = (\theta_{N,\beta})^{-1}\!\left(\frac{a_{N,\beta}}{N}\right) + \frac{1}{\theta_{N,\beta}'((\theta_{N,\beta})^{-1}(\frac{a_{N,\beta}}{N}))} \left(\frac{3\beta'-1-\beta'^2}{24\beta'^2 N\lambda_N}\right) + o\!\left(\frac{1}{N\, a_{N,\beta}}\right).$$
Finally we can replace up to a multiplicative $(1+o(1))$ 
$$\frac{1}{\theta_{N,\beta}'((\theta_{N,\beta})^{-1}(\frac{a_{N,\beta}}{N}))} \quad\text{by}\quad \frac{1}{\log (\frac{N}{\lambda_N})},$$
and then $\lambda_N\,\log(\frac{N}{\lambda_N})$ by its equivalent $a_{N,\beta}$ in order to obtain:
$$\frac{\lambda_N}{N} = (\theta_{N,\beta})^{-1}\!\left(\frac{a_{N,\beta}}{N}\right) + \frac{3\beta'-1-\beta'^2}{24\beta'^2\,N\,a_{N,\beta}} + o\!\left(\frac{1}{N\, a_{N,\beta}}\right).$$
Remultiplying by $N$ yields the desired asymptotic expansion.
\end{proof}\medskip

We can now demonstrate the third part of Theorem \ref{thm:super_moderate} by replacing in Equation \eqref{eq:legendre_true_moderate} all the occurrences of $h_N$ by the asymptotic expansion computed above. In the sequel we write $\vartheta_N = (\theta_{N,\beta})^{-1}(\frac{a_{N,\beta}}{N})$.

\begin{itemize}
    \item $ -\frac{v_{N,\beta}\, (h_N)^2}{2}$. Taking the square of the formula of the previous proposition we get:
    $$(h_N)^2 = 4(\beta')^2\left(N\vartheta_N\right)^2 +  \frac{(3\beta'-1-\beta'^2)\,N\vartheta_N}{3\,a_{N,\beta}} + o\!\left(\frac{1}{\log N}\right).$$
    On the other hand by using Proposition \ref{prop:mean_variance_jacobi} and Equation \eqref{eq:variance_huapickrell} we get:
    \begin{align*}
    v_{N,\beta} &= \frac{1}{2\beta'} \left(2\log(1+2\eps_{\lambda_N})-\log(1+\eps_{\lambda_N})-\log(4\eps_{\lambda_N})\right) +\frac{3\beta'-1-\beta'^2}{48\beta'^3(\lambda_N)^2} \\
    &\quad + \frac{\beta'-1}{2\beta'^2}\left(\frac{1}{2(N+\lambda_N)}  -\frac{1}{N+2\lambda_N} \right) + O_\beta\!\left(\frac{1}{N^2}+\frac{1}{(\lambda_N)^3}\right)\\
    &=\frac{1}{2\beta'} \left(2\log(1+2\vartheta_N)-\log(1+\vartheta_N)-\log(4\vartheta_N)\right) +\frac{3\beta'-1-\beta'^2}{48\beta'^3(\lambda_N)^2} \\
    &\quad + \frac{\beta'-1}{2\beta'^2}\left(\frac{1}{2(N+\lambda_N)}  -\frac{1}{N+2\lambda_N} \right) + O_\beta\!\left(\frac{1}{N^2}+\frac{1}{(\lambda_N)^2\, \log N}\right).
    \end{align*}
    When we multiply by $-\frac{(h_N)^2}{2}$, we obtain:
    \begin{align*}
    -\frac{v_{N,\beta}\,(h_N)^2}{2}&=  -\beta' (N \vartheta_N)^2 \log\left(1+\frac{1}{4\vartheta_N(1+\vartheta_N)}\right)+\frac{1-3\beta'+\beta'^2}{8\beta'}\\
    &\quad + \frac{N(\beta'-1)}{2}\left(\frac{(\eps_{\lambda_N})^2}{(1+\eps_{\lambda_N})(1+2\eps_{\lambda_N})} \right) + o(1).
    \end{align*}
    % \begin{align*}
    % (\sigma_{N,\beta,\delta_N})^2(h_N)^2 
    % &= 2\beta'(N\vartheta_N)^2 \log\!\left(1+\frac{1}{4\vartheta_N(1+\vartheta_N)}\right) \\
    %  &+\frac{1}{2\beta'} \left(\frac{(4\beta'-1-2\beta'^2)\,\vartheta_N}{3\,\alpha_N} \right)\left(-\log(4\vartheta_N)\right)\\
    % & +\frac{3\beta'-1-\beta'^2}{12\beta'}  - N(\beta'-1)\left(\frac{(\lambda_N)^2}{(1+2 \lambda_N)(1+\lambda_N)} \right) + o(1)\\
    % &=2\beta'(N\vartheta_N)^2 \log\!\left(1+\frac{1}{4\vartheta_N(1+\vartheta_N)}\right)+\frac{11\beta'-3-5\beta'^2}{12\beta'}  \\
     % &\quad - N(\beta'-1)\left(\frac{(\lambda_N)^2}{(1+2 \lambda_N)(1+\lambda_N)} \right) + o(1)
    % \end{align*}
    
    \item $\log h_N$. It is equal to $\log\beta + \log a_{N,\beta} - \log \log (\frac{N}{a_{N,\beta}}) +o(1)$.
    \item $\log(1+\frac{(\eps_{\lambda_N})^2}{1+2\eps_{\lambda_N}})$. We can replace it by $\log(1+\frac{(\vartheta_N)^2}{1+2\vartheta_N})$, because the difference is a $O(\frac{1}{N^2\,\log N})$.
\end{itemize}

\noindent So, $-\Lambda_{N,\beta}^*(a_{N,\beta})$ is equal to
\begin{align*}
 & -\beta' (N \vartheta_N)^2 \log\left(1+\frac{1}{4\vartheta_N(1+\vartheta_N)}\right) + \frac{\beta'^2-3\beta'+1}{12\beta'} \left(\log a_{N,\beta} - \log \log\!\left(\frac{N}{a_{N,\beta}}\right)\right) \\
&\quad- \frac{N^2\beta'-N(\beta'-1)}{2} \log\!\left(1+\frac{(\vartheta_N)^2}{1+2\vartheta_N}\right) \\
&\quad+  \frac{1-\beta'^2}{12\beta'} A_\beta  + \frac{\beta'-1}{4}\log \pi  + \frac{1}{12\beta'}\log 2 + \frac{1-6\beta'-\beta'^2}{12\beta'} \log \beta' + \beta'\zeta'(-1) + o(1).
\end{align*}
Replacing $\E^{-\Lambda_{N,\beta}^*(a_{N,\beta})}$ by this estimate in Equation \eqref{eq:almost_there}, we obtain the third item of Theorem \ref{thm:super_moderate}, with a constant $C_\beta$ equal to:
\begin{equation*}
C_\beta =  \frac{2^{\frac{1}{12\beta'}}\,\pi^{\frac{\beta'-3}{4}}}{\beta}\, \exp\left(\frac{1-\beta'^2}{12\beta'}(A_\beta+\log\beta') +\beta'\zeta'(-1)\right).
\end{equation*}\medskip

\subsubsection{Not too large moderate deviations: $\log N \ll a_{N,\beta} \lesssim N^{1/3}$.}
The probability computed in the third item of Theorem \ref{thm:super_moderate} is a function of $a_{N,\beta}$ which is explicit but a bit complicated. If $a_{N,\beta}$ is not too large, then we can simplify a lot this expression; this is the last part of Theorem \ref{thm:super_moderate}. Thus let us suppose that $\log N \ll a_{N,\beta} \lesssim N^{1/3}$. Then 
\begin{align*}
\log\!\left(1+\frac{1}{4\vartheta_N(1+\vartheta_N)}\right) &= \log\!\left(\frac{1}{4\vartheta_N}\right) + O(\vartheta_N);\\
-(N\vartheta_N)^2 \log\!\left(1+\frac{1}{4\vartheta_N(1+\vartheta_N)}\right) &= (N\vartheta_N)^2 \log(4\vartheta_N) + O(N^2(\vartheta_N)^3),
\end{align*}
and the remainder is a $O(\frac{(a_{N,\beta})^3}{N(\log N)^3})=o(1)$. Let us now inject $\vartheta_N$ in the equation that defines it. We use the fact that $\theta_{N,\beta}-\theta$ is a uniform $O(\frac{1}{N})$:
\begin{align*}
\vartheta_N \log(4\vartheta_N) &= (1+2\vartheta_N)\log(1+2\vartheta_N) - (1+\vartheta_N)\log(1+\vartheta_N) - \frac{a_{N,\beta}}{N} + O\!\left(\frac{1}{N}\right);\\
(N\vartheta_N)^2 \log(4\vartheta_N) &= (N\vartheta_N)^2 - a_{N,\beta}(N\vartheta_N) + O(N^2(\vartheta_N)^3+N(\vartheta_N)^2) \\ 
&=(N\vartheta_N)^2 - a_{N,\beta}(N\vartheta_N)+o(1).
\end{align*}
On the other hand,
\begin{align*}
- \frac{N^2\beta'-N(\beta'-1)}{2} \log\! \left(1+\frac{(\vartheta_N)^2}{1+2\vartheta_N}\right) &= -\frac{\beta'(N\vartheta_N)^2}{2}+O(N^2(\vartheta_N)^3+N(\vartheta_N)^2) \\
&= -\frac{\beta'(N\vartheta_N)^2}{2} + o(1).
\end{align*}
So the term in the exponential in the asymptotic expansion of $\proba_{\beta}[X_N \geq a_{N,\beta}]$ reduces to
$$\beta' \left(\frac{(N\vartheta_N)^2}{2} - a_{N,\beta}(N\vartheta_N)\right) + o(1).$$
This ends our study of the regime of moderate deviations.\medskip

\subsection{Precise large deviations}\label{subsec:precise_large_deviations}
In this last paragraph we suppose that $a_{N,\beta}=\alpha_0 N$ for some positive constant $\alpha_0>0$. If we go back to the tilting argument in the proof of validity of the general scheme, we see that everything works the same until we need to discard the remainder 
$$O_\beta(\eps_N) = O_\beta\!\left(\frac{1}{h_N\,(v_{N,\beta})^\frac{3}{2}}\right)$$ 
in the computation of the integral $I_N$ (the order of magnitude of $\eps_N$ is given by Equation \eqref{eq:kolmogorov}). When $a_{N,\beta}=O(N)$, the variance is a $O(1)$ and this remainder is not negligible anymore, but we can still compute an upper and lower bound. In the sequel, we focus on the upper bound; the proof of the lower bound relies on similar arguments and is sketched at the end of Paragraph \ref{subsubsec:tilting_variance_LD}. The reason why we need $\alpha_0$ small in order to get a lower bound is the following: unfortunately, our techniques are not sufficiently precise in order to always ensure that the remainder does not entirely compensate the main term in the estimation of $I_N$ (actually, the remainder does compensate the main term for $\alpha_0$ large, since $\proba_\beta[X_N \geq \alpha_0 N]=0$ when $\alpha_0 >\log 2$).\medskip

Henceforth we start from the inequality
$$\proba_{\beta}[X_N \geq \alpha_0 N] \leq \frac{\exp(-\Lambda_{N,\beta}^*(\alpha_0 N))}{\sqrt{2\pi v_{N,\beta}}\,h_N}\,\left(1+\frac{c_\beta}{v_{N,\beta}}\right),$$
where $c_\beta$ is some positive constant implied in the $O_\beta(\cdot)$ of the remainder in the estimate of $I_N$. 
Our goal is to obtain some explicit function $F(\alpha_0, N)$ such that the ratio
$$\frac{\proba_{\beta}[X_N\geq \alpha N]}{F(\alpha_0, N)}$$
is bounded from above. In Paragraph \ref{subsubsec:tilting_variance_LD} we compute in terms of $\alpha_0$ and $N$ an estimate of $h_N$ in the regime of large deviations, from which we derive an estimate of the variance $v_{N,\beta}$.  We then estimate $\Lambda_{N,\beta}^*(\alpha_0 N)$ in Paragraph \ref{subsubsec:rate_function}, which enables us to complete the proof of Theorem \ref{thm:super_large}.

\begin{remark}
In the following we require $\alpha_0$ to be always larger than some fixed positive quantity $\alpha$, and all the implied constants in the $O(\cdot)$'s depend implicity on this lower bound $\alpha$. Again, to make this clear, we indicate this by an index $\alpha$. This hypothesis allows us for instance to replace a $O_\beta(\frac{1}{\alpha_0 N})$ or $O_\beta(\frac{1}{h_N})$ by a $O_{\alpha,\beta}(\frac{1}{N})$.
\end{remark}

\subsubsection{Tilting parameter and variance in the regime of large deviations}\label{subsubsec:tilting_variance_LD}
In the regime of large deviations let us compute an asymptotic expansion of $\lambda_N$ up to order $O(\frac{1}{N})$. We have
\begin{align}
\alpha_0 &= \theta\!\left(\frac{\lambda_N}{N}\right) + \frac{\beta'-1}{2\beta'N} \left(\log 2 + \log(1+\eps_{\lambda_N}) - \log(1+2\eps_{\lambda_N})\right) + O_{\alpha,\beta}\!\left(\frac{1}{N^2}\right).\label{eq:to_invert}
\end{align}
Notice that in the framework of large deviations, we can work with the function $\theta$ instead of its modification $\theta_{N,\beta}$, because all the terms of the formulas which we shall manipulate have a well-identified order of magnitude which is a power of $N$. This was not the case in the framework of moderate deviations, and this is why until now we needed to work with $\theta_{N,\beta}$.
\medskip

As $\alpha_0$ and $\frac{\lambda_N}{N}$ are of order $O(1)$, the function $\theta$, its inverse $\theta^{-1}$ and their derivatives are Lipschitz on their domain of interest, so we can recursively compute the terms of the asymptotic expansion of $\frac{\lambda_N}{N}$. 
 The first order expansion is $\frac{\lambda_N}{N}=\theta^{-1}(\alpha_0)+\frac{L_1^*}{N}$ for some $L_1^*=O(1)$, and by replacing $\frac{\lambda_N}{N}$ by this formula in Equation \eqref{eq:to_invert}, we obtain:
$$\alpha_0 = \alpha_0 + \frac{\theta'(L_0)\,L_1^*}{N} + \frac{\beta'-1}{2\beta'N} \left(\log 2 + \log(1+L_0) - \log(1+2L_0)\right) + O_{\alpha,\beta}\!\left(\frac{1}{N^2}\right),$$
where $L_0=\theta^{-1}(\alpha_0)$. Thus by identification
\begin{align*}
L_1^*&=\frac{\beta'-1}{2\beta'}\,\frac{\log(1+2L_0)-\log(1+L_0)-\log 2}{2\log(1+2L_0)-\log(1+L_0)-\log(4L_0)}+O_{\alpha,\beta}\!\left(\frac{1}{N}\right);\\
\frac{\lambda_N}{N} &= L_0 + \frac{\beta'-1}{2\beta'N}\,\frac{\log(1+2L_0)-\log(1+L_0)-\log 2}{2\log(1+2L_0)-\log(1+L_0)-\log(4L_0)} + O_{\alpha,\beta}\!\left(\frac{1}{N^2}\right).
\end{align*}
On the other hand, we have
\begin{align*}
v_{N,\beta} &= \frac{1}{2\beta'} \log\!\left(1+\frac{1}{4\frac{\lambda_N}{N}(1+\frac{\lambda_N}{N})}\right) +O_{\alpha,\beta}\!\left(\frac{1}{N}\right) \\ 
&= \frac{1}{2\beta'} \log\!\left(1+\frac{1}{4L_0(1+L_0)}\right) + O_{\alpha,\beta}\!\left(\frac{1}{N}\right).
\end{align*}
As a consequence \begin{align*}
\left(1+\frac{c_\beta}{v_{N,\beta}}\right) &= O_{\alpha,\beta}((L_0)^2);\\
\frac{1}{\sqrt{2\pi v_{N,\beta}}\,h_N}\,\left(1+\frac{c_\beta}{v_{N,\beta}}\right) &= O_{\alpha,\beta}\!\left(\frac{(L_0)^2}{N}\right).
\end{align*}
We have thus demonstrated the first part of Theorem \ref{thm:super_large}. The proof of the lower bound is similar, working this time with a factor
$$1 - \frac{c_\beta}{v_{N,\beta}}.$$
For $\alpha_0$ in an interval $[\alpha,\alpha']$ with $\alpha'$ small enough, $v_{N,\beta}$ stays larger than $2c_\beta$, so this factor is greater than $\frac{1}{2}$. On the other hand, the ratio  $$\frac{h_{N}\,\sqrt{v_{N,\beta}}}{N}$$
stays bounded from above and from below for $\frac{a_N}{N}=\alpha_0 \in [\alpha,\alpha']$; this ends this sketch of proof of the second part of Theorem \ref{thm:super_moderate}.
\medskip

\subsubsection{Computation of the rate function}\label{subsubsec:rate_function} We now focus on the exponential term in our estimate of the probability $\proba_{\beta}[X_N \geq \alpha_0 N]$, and we suppose first that $\beta=2$. In this case, we can easily compute the term of order $\frac{1}{N^2}$ in the expansion of $\frac{\lambda_N}{N}$. Indeed by Proposition \ref{prop:mean_variance_jacobi}, for $\alpha_0\in [\alpha,\log 2)$,
$$\alpha_0 = \theta\!\left(\frac{\lambda_N}{N}\right) + \frac{1}{12N^2}\left(\frac{1}{1+\frac{\lambda_N}{N}} - \frac{1}{2\frac{\lambda_N}{N}} - \frac{1}{1+2\frac{\lambda_N}{N}}\right)+O_\alpha\!\left(\frac{1}{N^3}\right),$$
so if we set $\frac{\lambda_N}{N}=L_0+\frac{L_2^*}{N^2}$ with $L_0=\theta^{-1}(\alpha_0)$, then
$$\alpha_0 = \alpha_0 + \frac{\theta'(L_0)\,L_2^*}{N^2} + \frac{1}{12N^2}\left(\frac{1}{1+L_0} - \frac{1}{2L_0} - \frac{1}{1+2L_0}\right) + O_{\alpha}\!\left(\frac{1}{N^3}\right),$$
which gives by identification:
$$L_2^* = \frac{1}{12 \log\!\left(1+\frac{1}{4L_0(1+L_0)}\right)}\,\left(\frac{1}{1+L_0} - \frac{1}{2L_0} - \frac{1}{1+2L_0}\right) +O_{\alpha}\!\left(\frac{1}{N}\right).$$
Let us inject this formula in $\Lambda_N^*(\alpha_0 N)$. Note that when $z=h_N$ is of order $N$, the estimate from Proposition \ref{prop:laplace_estimate_haar} is not really precise because of the remainder $O(\frac{|z|^2}{N^2})$. However, we can use again Equation \eqref{eq:stirling_barnes} and the exact formula for $\Lambda_N$ to obtain:
\begin{align*}
\Lambda_N(h_N) &= -\frac{N^2}{2} \left((1+2\eps_{\lambda_N})^2\log (1+2\eps_{\lambda_N}) - 2(1+\eps_{\lambda_N})^2\log(1+\eps_{\lambda_N}) -2(\eps_{\lambda_N})^2 \log (4\eps_{\lambda_N}) \right) \\
&\quad - \frac{\log N}{12} + \frac{\log 2}{12}  + \frac{1}{12}\,\log\!\left(\frac{1+2\eps_{\lambda_N}+(\eps_{\lambda_N})^2}{\eps_{\lambda_N}(1+2\eps_{\lambda_N})}\right) + \zeta'(-1) +O\!\left(\frac{1}{N}\right).
\end{align*}
Substracting $a_Nh_N=2\alpha_0\lambda_N N$ yields:
\begin{align*}
% \Lambda_N(2\lambda N) &= \frac{N^2}{2} \left((1+2\lambda)^2\log (1+2\lambda) - 2(1+\lambda)^2\log(1+\lambda) -2\lambda^2 \log (4\lambda) \right) \\
% & - \frac{\log N}{12} + \frac{\log 2}{12}  + \frac{1}{12}\,\log\!\left(\frac{1+2\lambda+\lambda^2}{\lambda(1+2\lambda)}\right) + \zeta'(-1) +O\!\left(\frac{1}{N}\right)\\
% a_Nh_N&=  N^2\,(2\lambda(1+2\lambda)\log(1+2\lambda) - 2\lambda(1+\lambda)\log(1+\lambda) - 2\lambda^2 \log(4\lambda))\\
%  &\quad + \frac{1}{6}\left(\frac{\lambda}{1+\lambda}-\frac{\lambda}{1+2\lambda}\right) - \frac{1}{12}+ O\!\left(\frac{1}{N}\right)\\
 -\Lambda_{N}^*(\alpha_0N) 
&= -N^2\, I(L_0) - \frac{\log N}{12} + O_\alpha(1),
\end{align*}
where
$$I(x) = -\frac{1}{2}\left((1-4x^2)\log (1+2x) - 2(1-x^2)\log(1+x) +2x^2 \log (4x) \right).$$
Here we used the fact that $\alpha_0$ and $L_0$ are bounded from below; this yields adequate upper bounds on some functions and their derivatives in the formul{\ae} above. Up to a modification of the constant $M_\alpha$, this implies the third part of Theorem \ref{thm:super_large}.
\bigskip

Finally let us estimate $\Lambda_{N,\beta}^*(\alpha_0N)$ when $\beta \neq 2$. In this case, since $\frac{\lambda_N}{N}=L_0+O(\frac{1}{N})$, it is not very difficult to see that the asymptotic expansion of the Legendre--Fenchel transform will involve terms of order $N^2$ and terms of order $N$ (and then terms of order $O(\log N)$). This exact asymptotic expansion is a bit complicated, so let us focus only on the leading term of order $N^2$; this will lead us to a simple (not sharp) large deviation principle. The Comparison Theorem \ref{thm:comparison} yields:
\begin{align*}
\Lambda_{N,\beta}(h_N)&=\beta'\,\Lambda_N(2\lambda_N) +O(N)+\frac{1-\beta'^2}{12\beta'} \int_0^{\infty} \frac{(1-\E^{-s\beta'\lambda_N })^2}{s}\,\eta_\beta(s)\DD{s},
\end{align*}
and the end of Proposition \ref{prop:laplace_estimate_beta} ensures that the integral is a $O(\log N)$. Therefore
\begin{align*}
-\Lambda_{N,\beta}^*(\alpha_0N) &= \Lambda_{N,\beta}(h_N) - \alpha_0N\,h_N\\
&= -N^2\beta'\, I\!\left(\frac{\lambda_N}{N}\right) + O(N) =  -N^2\beta' \,I(L_0) + O(N).
\end{align*}
This ends the proof of Theorem \ref{thm:super_large}.
\bigskip
\bigskip

\printbibliography

@article{BJ30,
    author = {H. Bohr and B. Jessen},
    title = {Über die Wertverteilung der Riemannschen Zetafunktion. Erste Mitteilung},
    journaltitle = {Acta Math.},
    volume = {54},
    pages = {1--35},
    year = {1930}
}

@article{BJ32,
    author = {H. Bohr and B. Jessen},
    title = {Über die Wertverteilung der Riemannschen Zetafunktion. Zweite Mitteilung},
    journaltitle = {Acta Math.},
    volume = {58},
    pages = {1--55},
    year = {1932}
}

@article{BJ48,
    author = {V. Borchsenius and B. Jessen},
    title = {Mean motions and values of the Riemann zeta function},
    journaltitle = {Acta Math.},
    volume = {80},
    pages = {97--166},
    year = {1948}
}

@article{BH95,
    author = {E. Bombieri and D. Hejhal},
    title = {On the distribution of zeros of linear combinations of Euler products},
    journaltitle = {Duke Math. J.},
    volume = {80},
    pages = {821--862},
    year = {1995}
}

@book{DZ98,
    author = {A. Dembo and O. Zeitouni},
    title = {Large Deviations Techniques and Applications},
    volume = {38},
    series = {Stochastic Modelling and Applied Probability},
    publisher = {Springer-Verlag},
    edition = {2nd edition},
    year = {1998}
}

@article{BR60,
    author = {R. R. Bahadur and R. R. Rao},
    title = {On deviations of the sample mean},
    journaltitle = {Ann. Math. Statis.},
    volume = {31},
    pages = {1015--1027},
    year = {1960}
}

@article{CS93,
    author = {N. R. Chaganty and J. Sethuraman},
    title = {Strong large deviation and local limit theorems},
    journaltitle = {Ann. Probab.},
    volume = {21},
    number = {3},
    pages = {1671--1690},
    year = {1993}
}

@book{FMN16,
    author = {V. Féray and P.-L. Méliot and A. Nikeghbali},
    title = {Mod-$\phi$ convergence. Normality Zones and Precise Deviations},
    series = {Springer Briefs in Probability and Mathematical Statistics},
    publisher = {Springer-Verlag},
    year = {2016}
}

@article{DKN15,
    author = {F. Delbaen and E. Kowalski and A. Nikeghbali},
    journal = "Intern. Math. Res. Not.",
    number = "11",
    pages = "3445--3485",
    title = "{Mod-$\phi$ convergence}",
    volume = "2015",
    year = "2015"
}

@inproceedings{FMN19,
	author = {V. Féray and P.-L. Méliot and A. Nikeghbali},
	title = {Mod-$\phi$ convergence, II: Estimates on the speed of convergence, },
	booktitle = {Séminaire de Probabilités L},
	series = {Lecture Notes in Mathematics},
	editor = {C. Donati-Martin and A. Lejay and A. Rouault},
	volume = {2252},
	pages = {405--478},
	year = {2019}
}

@article{JKN11,
    author = {J. Jacod and E. Kowalski and A. Nikeghbali},
    title = {Mod-Gaussian convergence: new limit theorems in probability and number theory},
    journal = {Forum Math.},
    volume = {23},
    pages = {835--873},
    year = {2011}
}

@article{FW00,
    author = {P. J. Forrester and N. S. Witte},
    title = {Gap probabilities in the finite and scaled Cauchy random matrix ensembles},
    journaltitle = {Nonlinearity},
    volume = {13},
    number = {6},
    pages = {1965--1986},
    year = {2000}
}

@article{KS00,
    author = {J.-P. Keating and N. Snaith},
    title = {Random matrix theory and $\zeta(1/2 + it)$},
    journaltitle = {Commun. Math. Phys.},
    volume = {214},
    pages = {57--89},
    year = {2000}
}

@book{For10,
    author = {P. J. Forrester},
    title = {Log-gases and random matrices},
    series = {London Mathematical Society Monographs},
    publisher = {Princeton University Press},
    year = {2010}
}

@article{HKO01,
    Author = {C. P. Hughes and J. P. Keating and N. O'Connell},
    Journal = {Comm. Math. Phys.},
    Number = {2},
    Pages = {429-451},
    Title = {{On the characteristic polynomial of a random unitary matrix}},
    Volume = {220},
    Year = {2001}
}

@article{BNR09,
    author = {P. Bourgade and A. Nikeghbali and A. Rouault},
    title = {Circular Jacobi ensembles and deformed Verblunsky coefficients},
    journaltitle = {Intern. Math. Res. Not.},
    volume = {2009},
    number = {23},
    pages = {4357--4394},
    year = {2009}
}

@article{DHR19,
    author = {M. Dal Borgo and E. Hovhannisyan and A. Rouault},
    title = {Mod-Gaussian convergence for random determinants},
    journaltitle = {Ann. Henri Poincaré},
    volume = {20},
    number = {1},
    pages = {259--298},
    year = {2019}
}

@article{JW35,
    author = {B. Jessen and A. Wintner},
    title = {Distribution functions and the Riemann zeta function},
    journaltitle = {Trans. Amer. Math. Soc.},
    volume = {38},
    pages = {48--88},
    year = {1935}
}

@article{KN04,
    author = {R. Killip and I. Nenciu},
    title = {Matrix models for circular ensembles},
    journaltitle = {Intern. Math. Res. Not.},
    volume = {2004},
    number = {50},
    pages = {2665--2701},
    year = {2004}
}

@article{KN12,
  Author = {E. Kowalski and A. Nikeghbali},
  Journal = {J. London Math. Soc.},
  Number = {2},
  Pages = {291--319},
  Title = {Mod-Gaussian distribution and the value distribution of $\zeta(\frac{1}{2}+\mathrm{i}t)$ and related quantities},
  Volume = {86},
  Year = {2012}
}

@article{Gho83,
    author = {A. Ghosh},
    title = {On the Riemann zeta function — mean value theorems and the distribution of $S(t)$},
    journaltitle = {J. Number Theory},
    volume = {17},
    pages = {93--102},
    year = {1983}
}

@book{Hua63,
    author = {L. K. Hua},
    title = {Harmonic analysis of functions of several complex variables in the classical
domains},
    series = {Transl. Math. Monographs},
    volume = {6},
    publisher = {Amer. Math. Soc.},
    year = {1963}
}

@article{Ner02,
    author = {Y. A Neretin},
    title = {Hua type integrals over unitary groups and over projective limits of unitary groups},
    journaltitle = {Duke Math. J.},
    volume = {114},
    pages = {239--266},
    year = {2002}
}

@article{BO01,
    author = {A. Borodin and G. Olshanski},
    title = {Infinite random matrices and ergodic measures},
    journaltitle = {Comm. Math. Phys.},
    volume = {203},
    pages = {87--123},
    year = {2001}
}

@article{Pic87,
    author = {D. Pickrell},
    title = {Measures on infinite-dimensional Grassmann manifolds},
    journaltitle = {J. Func. Anal.},
    volume = {70},
    number = {2},
    pages = {323--356},
    year = {1987}
}

@article{Pic91,
    author = {D. Pickrell},
    title = {Mackey analysis of infinite classical motion groups},
    journaltitle = {Pacific J. Math.},
    volume = {150},
    pages = {139--166},
    year = {1991}
}

@article{Bar00,
    author = {E. W. Barnes},
    title = {The theory of the $G$-function},
    journaltitle = {The Quarterly Journal of Pure and Applied Mathematics},
    volume = {31},
    pages = {264--314},
    year = {1900}
}

@online{RS15,
    author = {M. Radziwill and K. Soundararajan},
    title = {Selberg’s central limit theorem for $\log|\zeta(\frac{1}{2} + it)|$},
    archivePrefix = "arXiv",
    eprint = {1509.06827},
    primaryClass = "math.NT",
    year = {2015}
}

@article{Vor87,
    author = {A. Voros},
    title = {Spectral functions, special functions and the Selberg zeta function},
    journaltitle = {Comm. Math. Phys.},
    volume = {110},
    pages = {439--465},
    year = {1987}
}

@article{Sas99,
    author = {Z. Sasvari},
    title = {An elementary proof of Binet's formula for the Gamma function},
    journaltitle = {Amer. Math. Monthly},
    volume = {106},
    number = {2},
    pages = {156--158},
    year = {1999}
}

@article{Sel46,
    author = {A. Selberg},
    title = {Contributions to the theory of the Riemann zeta-function},
    journaltitle = {Arch. Math. Naturvid.},
    volume = {48},
    number = {5},
    pages = {89--155},
    year = {1946}
}

@inproceedings{Sel92,
    author = {A. Selberg},
    title = {Old and new conjectures and results about a class of Dirichlet series},
    booktitle = {Proceedings of the Amalfi Conference on Analytic Number Theory (Maiori, 1989)},
    pages = {367--385},
    publisher = {Univ. Salerno},
    year = {1992}
}

@book{Zor02,
    author = {V. A. Zorich},
    title = {Mathematical Analysis II},
    publisher = {Springer Verlag},
    series = {Universitext},
    edition = {4th corrected edition},
    year = {2002}
}

@article{RS96,
    author = {Z. Rudnick and P. Sarnak},
    title = {Zeros of principal L-functions and random matrix theory},
    journaltitle = {Duke Math. J.},
    volume = {81},
    number = {2},
    pages = {269--322},
    year = {1996}
}

@article{Mont73,
    author = {H. L. Montgomery},
    title = {The pair correlation of zeros of the zeta function},
    journaltitle = {Proc. Sym. Pure Math. AMS},
    volume = {24},
    pages = {181--193},
    year = {1973}
}

@article{CFKRS05,
    author = {J. B. Conrey and D. W. Farmer and J. P. Keating and M. O. Rubinstein and N. C. Snaith},
    title = {Integral moments of $L$-functions},
    journaltitle = {Proc. London Math. Soc.},
    pages = {33--104},
    volume = {91},
    number = {1},
    year = {2005}
}

@article{FHK12,
    author = {Y. V. Fyodorov and G. A. Hiary and J. P. Keating},
    title = {Freezing transition, characteristic polynomials of random matrices, and the Riemann zeta function},
    journaltitle = {Phys. Rev. Lett.},
    volume = {108},
    number = {170601},
    year = {2012}
}

@article{FK14,
    author = {Y. V. Fyodorov and J. P. Keating},
    title = {Freezing transitions and extreme values: random matrix theory, and disordered landscapes},
    journaltitle = {Phil. Trans. R. Soc. A},
    volume = {372},
    number = {20120503},
    year = {2014}
}

@article{CMN18,
    author = {R. Chhaibi and T. Madaule and J. Najnudel},
    title = {On the maximum of the C$\beta$E field},
    journal = {Duke Math. J.},
    volume = {167},
    number = {12},
    pages = {2243--2345},
    year = {2018}
}

@article{ABB17,
    author = {L.-P. Arguin and D. Belius and P. Bourgade},
    title = {Maximum of the characteristic polynomial of random unitary matrices},
    journaltitle = {Commun. Math. Phys.},
    volume = {349},
    pages = {703--751},
    year = {2017}
}

@article{ABBRS19,
    author = {L.-P. Arguin and D. Belius and P. Bourgade and M. Radziwill and K. Soundararajan},
    title = {Maximum of the Riemann zeta function on a short interval of the critical line},
    journal = {Commun. Pure Appl. Math.},
    volume = {72},
    number = {3},
    pages = {500--535},
    year = {2019}
}

@article{Naj18,
    author = {J. Najnudel},
    title = {On the extreme values of the Riemann zeta function on random intervals of the critical line},
    journal = {Probab. Th. Rel. Fields},
    volume = {172},
    pages = {387--452},
    year = {2018}
}

\end{document}